\numberwithin{equation}{section}
\newtheorem{thm}{Theorem}[section]
\newtheorem*{thmA}{Theorem A}
\newtheorem*{thmB}{Theorem B}%[section]
\newtheorem*{thmA'}{Theorem A'}
\newtheorem*{thmB'}{Theorem B'}%[section]
\newtheorem*{thmA''}{Theorem A''}
\newtheorem*{thmB''}{Theorem B''}%[section]
\newtheorem{lem}[thm]{Lemma}
\newtheorem{pro}[thm]{Proposition}
\newtheorem{cor}[thm]{Corollary}
\theoremstyle{definition}
\newtheorem{defi}[thm]{Definition}
\newtheorem{rmk}[thm]{Remark}
\numberwithin{equation}{section}
\def\div{\mathop\mathrm{div}\nolimits}
\def\tr{\mathop\mathrm{tr}\nolimits}
\def\vol{\mathop\mathrm{vol}\nolimits}
\def\Ric{\mathop\mathrm{Ric}\nolimits}
\newcommand{\supp}{\operatorname{supp}}
\newcommand{\dist}{\operatorname{dist}}
\newcommand{\e}{\mbox{$\mathrm{e}$}}
\newcommand{\R}{\mbox{${\mathbb R}$}}
\newcommand{\N}{\mbox{${\mathbb N}$}}
\newcommand{\g}[2]{\langle #1 ,#2 \rangle}
\newcommand{\ep}{\varepsilon}
\newcommand{\rf}[1]{\mbox{(\ref{#1})}}
\newcommand{\eig}{\lambda_1^L}
\newcommand{\eigd}{\lambda_1^{\Delta}}
\newcommand{\eigb}{\lambda_1^{\overline{L}}}
\newcommand{\M}{(M,\g{\ }{\ })}
\newcommand{\Mo}{M\setminus\left\{o\right\}}
\newcommand{\MG}{\left(M,\g{\,}{\,},G\right)}
\newcommand{\smM}{\mathrm{C}^{\infty}(M)}
\def\beq{\begin{equation}}
\def\eeq{\end{equation}}
\def\beqs{\begin{equation*}}
\def\eeqs{\end{equation*}}
\begin{document}

\title[Lichnerowicz-type equations on complete manifolds]{Lichnerowicz-type equations\\ on complete manifolds}

\author[Guglielmo Albanese]{Guglielmo Albanese}
\author[Marco Rigoli]{Marco Rigoli}

\address{Dipartimento di Matematica, Universit\`{a} degli Studi di Milano, Via Saldini 50, I-20133, Milano, Italy}
\email {guglielmo.albanese@unimi.it, gu.albanese@gmail.com}
\email {marco.rigoli@unimi.it}

\address{Departamento de Matem\'atica, Universidade Federal do Cear\'a
Av. Humberto Monte s/n, Bloco 914, 60455-760 Fortaleza, Brazil}
\email {marco.rigoli55@gmail.it}

\date{\today}

\maketitle
\begin{abstract}
Under appropriate spectral assumptions we prove two existence results for positive solutions of Lichnerowicz-type equations on complete manifolds. We also give \emph{a priori} bounds and a comparison result that immediately yields uniqueness for certain classes of solutions. No curvature assumptions are involved in our analysis.
\end{abstract}
\section{Introduction}

In the analysis of Einstein field equations in General Relativity the initial data set for the non-linear wave system plays an essential role. These initial data have to satisfy the Einstein constraint conditions that can be expressed in a geometric form as follows. Let $\left(M,\widehat{g}\right)$ be a Riemannian manifold and $\widehat{K}$ a symmetric $2$-covariant tensor on $M$. Then $\left(M,\widehat{g}\right)$ is said to satisfy the Einstein constraint equations with non-gravitational energy density $\widehat{\rho}$ and non-gravitational momentum density $\widehat{J}$ if
	\beq\label{inidata}
	\begin{cases}
	\left|\widehat{K}\right|^2_{\widehat{g}}-\left(\tr_{\widehat{g}}\widehat{K}\right)^2=S_{\widehat{g}}-\widehat{\rho}&\\
	\div_{\widehat{g}}\widehat{K}-\nabla\tr_{\widehat{g}}\widehat{K}=\widehat{J}\,.&
	\end{cases}
	\eeq
Here $S_{\widehat{g}}$ stands for the scalar curvature of the metric $\widehat{g}$. To look for solutions of \rf{inidata} using the conformal method introduced by Lichnerowicz in \cite{Li} means that we generate an initial data set $\left(M,\widehat{g},\widehat{K},\widehat{\rho},\widehat{J}\right)$ satisfying \rf{inidata} by first choosing the following conformal data: A Riemannian manifold $\M$; a symmetric $2$-covariant tensor $\sigma$ required to be traceless and transverse with respect to $\g{\,}{\,}$, that is, for which $\tr_{\g{\,}{\,}}\sigma=0$ and $\div_{\g{\,}{\,}}\sigma=0$; a scalar function $\tau$ (that will play the role of a non normalized mean curvature); a non-negative scalar function $\rho$ and a vector field $J$. Then one looks for a positive function $u$ and a vector field $W$ that solve the conformal constraint equations
	\beq\label{confcon}
	\begin{cases}
	\Delta u-c_{m}Su+c_m\left|\sigma+\mathring{\mathcal{L}}W\right|^{2}u^{-N-1}-b_m\tau^2u^{N-1}+c_m\rho u^{-N\slash2}=0&\\
	\Delta_{\mathbb{L}}W+\frac{m-1}{m}u^N\nabla\tau+J=0\,.&
	\end{cases}
	\eeq
Where $\Delta$, $S$, and $\left|\cdot\right|$ denote respectively the Laplace-Beltrami operator, the scalar curvature, and the norm in the metric $\g{\,}{\,}$. The operator $\mathring{\mathcal{L}}$ is the traceless Lie derivative, that is, in a local orthonormal coframe
	\beqs
	\left(\mathring{\mathcal{L}}W\right)_{ij}=W_{ij}-W_{ji}-\frac{2}{m}\left(\div W\right)\delta_{ij}
	\eeqs 
and $\Delta_{\mathbb{L}}=\div\circ\mathring{\mathcal{L}}$ is the vector laplacian. The constants appearing in \rf{confcon} are respectively given by
	\beqs
	N=\frac{2m}{m-2}\,,\quad\quad c_m=\frac{m-2}{4(m-1)}\,,\quad\quad b_m=\frac{m-2}{4m}\,,
	\eeqs
in particular we note that $N$ is the critical Sobolev exponent. If $\left(u,W\right)$ is a solution of \rf{confcon} then the initial data set
	\beqs
	\begin{aligned}
	\widehat{g}=u^{\frac{4}{m-2}}\g{\,}{\,}\,,\quad\quad \widehat{\rho}=u^{-\frac{3N+2}{2}}\rho\,,\quad\quad\widehat{J}=u^{-N}J\,,\\
	\widehat{K}^{ij}=u^{-2\frac{m+2}{m-2}}\left(\sigma+\mathring{\mathcal{L}}W\right)^{ij}+\frac{\tau}{m}u^{\frac{4}{m-2}}\delta^{ij}
	\end{aligned}
	\eeqs
satisfies the Einstein constraints \rf{inidata}. For further informations on the physical content of the system \rf{confcon} we refer to the recent surveys \cite{BI}, \cite{CGP}, and the references therein.\\

The aim of this paper is to study the existence, \emph{a priori} bounds, and uniqueness of positive solutions of the Lichnerowicz-type equation
	\beq\label{lich}
	\Delta u+a(x)u-b(x)u^{\sigma}+c(x)u^{\tau}=0
	\eeq
with, at least, continous coefficients, $a(x)$, $b(x)$, $c(x)$, $\sigma>1$, $\tau<1$, and with the sign restrictions
	\beq\label{signbc}
	b(x)\geq 0\,,\quad\quad c(x)\geq0\,,
	\eeq
on a complete, non compact, connected manifold $\M$.
Equation \rf{lich} is of the same form of the scalar equation of system \rf{confcon} in case $\rho\equiv0$, the coefficients $b(x)$ and $c(x)$ corresponding respectively to
	\beqs
	b_m\tau^2\quad\hbox{and}\quad c_m\left|\sigma+\mathcal{L}W\right|^2_g\,.
	\eeqs
This latter fact justifies the sign condition \rf{signbc}. %It is also worth observing that, beside nonnegativity of $c(x)$, we will not add other requests on it for the validity of the existence result. This fact should simplify the search for solutions of the coupled system \rf{confcon}.\\
In recent years this type of equations has been studied by many authors, see for instance \cite{Ma}, \cite{Br}, \cite{MW}, and \cite{DAM}. In the present work we significantly generalize many of the results obtained in the aforementioned papers.\\
We now introduce some notations and state our main existence results. Let $a(x)\in\mathrm{C}^0(M)$ and $L=\Delta+a(x)$. If $\Omega$ is a non-empty open set, the first Dirichlet eigenvalue $\lambda_1^L(\Omega)$ is variationally characterized by means of the formula
	\beq\label{eivar}
	\lambda_1^L(\Omega)=\inf\left\{\int_{\Omega}\left|\nabla\varphi\right|^2-a(x)\varphi^2\,:\,\varphi\in\mathrm{W}_0^{1,2}(\Omega)\,,\int_{\Omega}\varphi^2=1\right\}\,.
	\eeq
We recall that, if $\Omega$ is bounded and $\partial\Omega$ is sufficiently regular, the infimum is attained by the unique normalized eigenfunction $v$ on $\Omega$ satisfying
	\beq\label{eifun}
	\begin{cases}
	\Delta v+a(x)v+\lambda_1^L(\Omega)v=0 & \hbox{on $\Omega$}\\
	v=0 & \hbox{on $\partial\Omega$}\\
	v>0 & \hbox{on $\Omega$}.
	\end{cases}
	\eeq
We then define the first eigenvalue of $L$ on $M$ as
	\beqs
	\eig(M)=\inf_{\Omega}\eig(\Omega)
	\eeqs
where $\Omega$ runs over all bounded domains of $M$. Observe that, due to the monotonicity of $\eig$ with respect to the domain, that is
	\beq\label{eimon}
	\Omega_1\subseteq\Omega_2\quad\hbox{implies}\quad\eig(\Omega_1)\geq\eig(\Omega_2)\,,
	\eeq
we have
	\beq
	\eig(M)=\lim_{r\rightarrow+\infty}\eig(B_r)
	\eeq
where, from now on, $B_r$ denotes the geodesic ball in the complete manifold $\M$ of radius $r$ centered at a fixed origin $o\in M$.\\
Note that in case $\Omega_2\setminus\Omega_1$ has non-empty interior the inequality in \rf{eimon} becomes strict.\\
We need to extend definition \rf{eivar} to an arbitrary bounded subset $B$ of $M$. We do this by setting
	\beq\label{eiB}
	\eig(B)=\sup_{\Omega}\eig(\Omega)
	\eeq
where the supremum is taken over all open bounded sets $\Omega$ with smooth boundary such that $B\subseteq\Omega$ . Observe that, by definition, if $B=\emptyset$ then $\eig(B)=+\infty$.\\

We would like to remark that since the first Dirichlet eigenvalue for the Laplacian of a ball $B_r$ grows like $r^{-2}$ as $r\rightarrow 0^+$, $\eig(B_r)\geq0$ provided $r$ is sufficiently small and one may think of $\eig(B)>0$ as a condition expressing the fact that $B$ is small in a spectral sense. Of course, this condition also depends on the behaviour of $a(x)$ therefore \emph{small in a spectral sense} does not necessarily mean, for instance, \emph{small in a Lebesgue measure sense}. This is clear if $a(x)\leq0$ because $\lambda_1^{\Delta}(M)\geq0$ in any complete manifold $\M$ so that in this case $\eig(M)\geq0$ and thus $\eig(B)>0$ on any bounded set $B\subset M$.\\

The main results of the paper are the following two existence theorems for positive solutions of equation \rf{lich}.

	\begin{thmA}
	Let $a(x)$, $b(x)$, $c(x)\in\mathrm{C}_{loc}^{0,\alpha}(M)$ for some $0<\alpha\leq1$. Assume \rf{signbc} and suppose that $b(x)$ is strictly positive outside some compact set. Let
		\beq\label{B0}
		B_0=\left\{x\in M:\, b(x)=0\right\}
		\eeq
	and suppose
		\beq\label{Bo>0}
		\eig(B_0)>0
		\eeq
	with $L=\Delta+a(x)$. Assume that 
		\beq\label{hpthmA}
		\eig(M)<0\,,
		\eeq
	then \rf{lich} has a maximal positive solution $u\in\mathrm{C}^2(M)$. 
	\end{thmA}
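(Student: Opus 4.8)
The plan is to run the method of sub- and supersolutions on the exhaustion of $M$ by the geodesic balls $B_n$ and then pass to the limit, using interior Schauder estimates (available because $a,b,c\in\mathrm{C}^{0,\alpha}_{loc}(M)$) together with a diagonal argument. \emph{Subsolution.} By \rf{hpthmA} and the definition of $\eig(M)$ there is a bounded domain $D$ with smooth boundary such that $\eig(D)<0$; let $v>0$ be its first Dirichlet eigenfunction, cf.\ \rf{eifun}, so that $\Delta v+av=-\eig(D)v$ on $D$ and $v=0$ on $\partial D$. For $\ep>0$ small one has, on $D$,
	\beqs
	\Delta(\ep v)+a(\ep v)-b(\ep v)^{\sigma}+c(\ep v)^{\tau}=\ep v\bigl(-\eig(D)-b(\ep v)^{\sigma-1}\bigr)+c(\ep v)^{\tau}\geq0\,,
	\eeqs
because $-\eig(D)>0$, $\sigma>1$, $v$ and $b$ are bounded on $D$, and $c\geq0$; extending $\ep v$ by $0$ outside $D$ produces a (weak) subsolution $\underline u\geq0$ of \rf{lich} on $M$ with $\underline u>0$ on $D$. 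When $\tau<0$ the extension by $0$ is illegitimate, since $0^{\tau}=+\infty$; there one first solves the problem with $u^{\tau}$ replaced by the truncation $\bigl(\max\{u,1/k\}\bigr)^{\tau}$ and removes the truncation at the end.

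\emph{Supersolution and a priori bound.} The set $B_0$ of \rf{B0} is compact, being contained in the compact set outside which $b>0$, so by \rf{Bo>0} we may fix bounded smooth domains $B_0\subset\Omega_1\Subset\Omega_0$ with $\eig(\Omega_0)=\mu>0$; let $\phi_0>0$ be the first Dirichlet eigenfunction of $\Omega_0$, so that $\phi_0\geq c_1>0$ on $\overline{\Omega_1}$ and $\Delta\phi_0+a\phi_0=-\mu\phi_0$. Since $\tau<1$, for $\Lambda$ large $\Lambda\phi_0$ is a supersolution on $\overline{\Omega_1}$, as
	\beqs
	\Delta(\Lambda\phi_0)+a(\Lambda\phi_0)-b(\Lambda\phi_0)^{\sigma}+c(\Lambda\phi_0)^{\tau}\leq-\mu\Lambda\phi_0+c(\Lambda\phi_0)^{\tau}\leq0\qquad\hbox{on $\overline{\Omega_1}$}\,,
	\eeqs
the linear term $-\mu\Lambda\phi_0$ absorbing the sublinear one. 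On $M\setminus\Omega_1$ we have $b>0$; there, on any ball $B$ with $\overline B$ disjoint from $B_0$, a positive solution $w$ obeys $\Delta w=bw^{\sigma}-aw-cw^{\tau}\geq\tfrac12 bw^{\sigma}$ wherever $w$ is large (this uses $\sigma>1>\tau$), and a Keller--Osserman estimate bounds $w$ on $B$. On $\overline{\Omega_1}$ instead, the substitution $w=\phi_0 h$ removes the zeroth-order coefficient on $\Omega_0$, and since $\tau<1$ a maximum-principle argument bounds $\sup_{\overline{\Omega_1}}h$, hence $\sup_{\overline{\Omega_1}}w$, in terms of $\sup_{\partial\Omega_1}w$. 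Patching these facts yields a positive function $\Psi$, locally bounded on $M$, such that every positive solution $w$ of \rf{lich} satisfies $w\leq\Psi$, and which can be arranged to be a genuine supersolution on each $B_n$ dominating $\underline u$.

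\emph{Passage to the limit and maximality.} For $n$ large enough that $\overline D\cup\overline{\Omega_0}\subset B_n$, the method of sub- and supersolutions gives a solution $u_n\in\mathrm{C}^2(B_n)$ with $\underline u\leq u_n\leq\Psi$ on $B_n$; by the a priori bound the $u_n$ are locally uniformly bounded, so Schauder estimates and a diagonal argument extract a subsequence converging in $\mathrm{C}^2_{loc}(M)$ to $u\in\mathrm{C}^2(M)$ solving \rf{lich}, with $\underline u\leq u\leq\Psi$. As $u\geq0$ and $u\geq\ep v>0$ on $D$, the strong maximum principle applied to $\Delta u=-au+bu^{\sigma}-cu^{\tau}$ (after removing the truncation when $\tau<0$) forces $u>0$ everywhere. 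Finally, the maximum of two positive solutions is a subsolution bounded above by $\Psi$, so the same scheme dominates it by a positive solution; hence the pointwise supremum $u^{\ast}$ of all positive solutions is finite, equals the increasing limit of a sequence of positive solutions — locally uniformly bounded above by $\Psi$ and below by a positive continuous function — and is therefore, by Schauder estimates, itself a positive solution of \rf{lich}. By construction it is the maximal one, which proves the theorem.

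\emph{Main obstacle.} I expect the delicate part to be the construction of the global supersolution, i.e.\ glueing the spectrally controlled barrier $\Lambda\phi_0$ near the degeneracy set $B_0$ of $b$ to the Keller--Osserman barriers on $M\setminus\Omega_1$ so that the result still dominates $\underline u$; this forces $\Omega_1$ to be chosen sufficiently close to $B_0$ and $\Lambda$ in a compatible window, and it is exactly here that \rf{Bo>0} and the strict positivity of $b$ outside a compact set are used. The case $\tau<0$ is the other sensitive point, since $u^{\tau}$ blows up as $u\to0^{+}$: one must run the whole argument for the truncated equation and then let $k\to\infty$ while keeping the solutions bounded away from $0$ on compact sets, which is precisely where the subsolution $\ep v$ — and hence the hypothesis $\eig(M)<0$ — re-enters.
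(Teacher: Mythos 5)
Your overall architecture (exhaustion, local \emph{a priori} bounds away from and near $B_0$, compactness, then a maximality argument) is close in spirit to the paper's, but two ingredients are genuinely different: the paper reduces Theorem A to Proposition \ref{thmmax}, whose hypothesis is a globally \emph{positive} subsolution, and obtains that subsolution from \rf{Bo>0} and \rf{hpthmA} by invoking Theorem 6.7 of \cite{MRS} for the Yamabe equation $\Delta v+a(x)v-b(x)v^{\sigma}=0$; you instead build an elementary compactly supported subsolution $\ep v$ from a domain with $\eig(D)<0$. Likewise, the paper gets maximality from boundary blow-up solutions on an exhaustion (Lemma \ref{blowup} plus the comparison Lemma \ref{compf}), while you use directedness of the solution set and a Perron-type supremum; your interior-maximum bound near $B_0$ via $w=\phi_0 h$ is a clean substitute for the comparison argument inside Lemma \ref{blowup}. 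These variants are legitimate in principle.

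The genuine gap is the case $\tau<0$, which the statement allows and which is the physically central one for the Lichnerowicz equation. Your subsolution is $\ep v$ extended by zero, and where $\underline u=0$ and $c>0$ the term $c\,\underline u^{\tau}$ is meaningless; you propose to truncate $u^{\tau}$ by $(\max\{u,1/k\})^{\tau}$ and ``remove the truncation at the end'', locating the needed uniform positive lower bound on compact sets in the subsolution $\ep v$. But $\ep v$ vanishes identically outside $D$, so it gives no lower bound on any compact set not contained in $D$; to let $k\rightarrow\infty$ you would additionally need a propagation-of-positivity argument uniform in $k$ (for instance the weak Harnack inequality applied to the linear inequality $\Delta u_k\leq C_K u_k$ satisfied locally by the truncated solutions, together with monotonicity in $k$, which itself requires a comparison across the truncated problems), none of which appears in your sketch; as written, this step fails. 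This is exactly the difficulty the paper sidesteps by feeding Proposition \ref{thmmax} an everywhere positive subsolution (note the requirement $\supp c(x)\subseteq\supp\varphi$ in Definition \ref{defsigma} when $\tau<0$). A second, smaller deficiency: your claim that the bound $\Psi$ ``can be arranged to be a genuine supersolution on each $B_n$ dominating $\underline u$'' is precisely the delicate gluing of $\Lambda\phi_0$ near $B_0$ with large constants on the region where $b>0$ across a transition annulus; the paper devotes the entire proof of Lemma \ref{lem1} to this point, whereas you only assert it (while correctly flagging it as the main obstacle).
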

In this case \emph{maximal} means that if $0<w\in\mathrm{C}^2(M)$ is a second solution of \rf{lich}, then $w\leq u$.
In the same vein we have the following result, where the spectral condition \rf{hpthmA} is substituted by a spectral smallness requirement on the zero set of the coefficient $c(x)$ and a pointwise control on the coefficients. Recall that given the real function $\alpha(x)$, its positive and negative part are respectively defined by
	\beqs
	\begin{aligned}
	& \alpha_+(x)=\max\left\{\alpha(x),0\right\}\,,\\
	& \alpha_-(x)=-\min\left\{\alpha(x),0\right\}\,.	
	\end{aligned}
	\eeqs

	\begin{thmB}
	Let $a(x)$, $b(x)$, $c(x)\in\mathrm{C}_{loc}^{0,\alpha}(M)$ for some $0<\alpha\leq1$. Assume the validity of \rf{signbc} and let
		\beqs
		B_0=\left\{x\in M:\, b(x)=0\right\}\,,\quad\quad C_0=\left\{x\in M:\, c(x)=0\right\}\,,
		\eeqs
	and
		\beq\label{hp1thmB}
		\lambda_1^{\Delta+a}(B_0)>0\,.
		\eeq
	Suppose that there exist two bounded open sets $\Omega_1$, $\Omega_2$ such that $C_0\subset\Omega_1\subset\subset\Omega_2$,
		\beq\label{hp2thmB}
		\sup_{M\setminus\overline{\Omega}_1}\frac{a_-(x)+b(x)}{c(x)}<+\infty \,,		
		\eeq
	and
		\beq\label{hp3thmB}
		\lambda_1^{\Delta-a}(\Omega_2)>0\,.
		\eeq
	Then \rf{lich} has a maximal positive solution $u\in\mathrm{C}^2(M)$. 
	\end{thmB}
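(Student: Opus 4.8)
The plan is to produce the maximal solution via a monotone iteration (sub/supersolution) scheme on an exhaustion of $M$ by bounded domains, following the standard strategy but with the two spectral conditions \rf{hp1thmB} and \rf{hp3thmB} playing distinct roles: the first controls the behaviour near the zero set $B_0$ of $b(x)$ (where the ``good'' absorbing term $-b(x)u^\sigma$ degenerates), and the second, together with the pointwise bound \rf{hp2thmB}, is what forces existence of a global supersolution. First I would construct a \emph{global positive supersolution} $\overline{u}\in\mathrm{C}^2(M)$. Away from $\overline{\Omega}_1$, where \rf{hp2thmB} gives $a_-(x)+b(x)\le\Lambda\, c(x)$ for some constant $\Lambda$, one checks that a sufficiently small constant $\varepsilon>0$ is a supersolution of \rf{lich} there, because $\Delta\varepsilon+a\varepsilon-b\varepsilon^\sigma+c\varepsilon^\tau \le \varepsilon\bigl(a_-+b\varepsilon^{\sigma-1}\bigr)\!\;+c\varepsilon^\tau$ and, using $\tau<1$, the term $c\varepsilon^\tau$ dominates once $\varepsilon$ is small; more precisely one absorbs $a_-\varepsilon+b\varepsilon^\sigma$ into $c\varepsilon^\tau$ using the bound. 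On $\Omega_2$, where $c$ may vanish, I would instead use \rf{hp3thmB}: let $\psi>0$ solve $\Delta\psi - a(x)\psi + \lambda_1^{\Delta-a}(\Omega_2)\psi = 0$ on a slightly enlarged smooth domain containing $\overline{\Omega}_2$, so that $\Delta\psi + a\psi = 2a\psi - \lambda_1^{\Delta-a}(\Omega_2)\psi$; for a suitable large multiple $K\psi$ the term $-b(K\psi)^\sigma$ is then used to swallow the positive contributions, and one adjusts so that $K\psi\ge\varepsilon$ on the overlap $\Omega_2\setminus\overline{\Omega}_1$. Gluing $\varepsilon$ and $K\psi$ by taking a minimum (and smoothing, or working with the Lipschitz supersolution and invoking standard results for weak super/subsolutions) yields the desired global supersolution $\overline{u}$, after possibly shrinking $\varepsilon$ and enlarging $K$ so that the matching holds in the transition region.

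Next I would produce a \emph{sequence of positive subsolutions} $u_n$ on an exhaustion $\{B_{r_n}\}$ with $u_n\le\overline{u}$. Here condition \rf{hp1thmB}, $\lambda_1^{\Delta+a}(B_0)>0$, enters exactly as in the construction preceding Theorem A: since $B_0$ is spectrally small, one finds a bounded open $U\supset B_0$ with $\lambda_1^{L}(U)>0$, picks the Dirichlet eigenfunction $\phi>0$ on $U$, and uses a small multiple $\delta\phi$ together with the term $-b(x)u^\sigma$ (which is strictly effective on $M\setminus B_0$, in particular on $B_{r_n}\setminus U$ for the relevant comparison) to build a subsolution on $B_{r_n}$ vanishing on $\partial B_{r_n}$; one checks $0<\delta\phi\le\overline{u}$ by choosing $\delta$ small. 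Then, on each $B_{r_n}$, monotone iteration between the subsolution $\delta\phi$ (extended by $0$) and the supersolution $\overline{u}$ gives a solution $u_n\in\mathrm{C}^{2,\alpha}(B_{r_n})$ of \rf{lich} with $\delta\phi\le u_n\le\overline{u}$; Schauder estimates (legitimate because $a,b,c\in\mathrm{C}^{0,\alpha}_{\mathrm{loc}}$) give local $\mathrm{C}^{2,\alpha}$ bounds independent of $n$ on compact sets. Passing to a subsequence, $u_n\to u\in\mathrm{C}^2(M)$ locally, and $u$ solves \rf{lich} with $0<u\le\overline{u}$; positivity is preserved since $u\ge\delta\phi$ locally on any fixed ball once $n$ is large, or by the strong maximum principle applied to the limit.

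Finally, \emph{maximality}: if $0<w\in\mathrm{C}^2(M)$ is any other solution, I would show $w\le u$. The idea is that by the same iteration one may start the scheme on each $B_{r_n}$ from the supersolution $\max\{\overline{u},w\}$ — or more cleanly, run the iteration from $\overline{u}$ downward but observe that $w$ is itself a subsolution on $B_{r_n}$ only up to its boundary values; the honest argument is to take $w_n$ the solution on $B_{r_n}$ with $w_n=w$ on $\partial B_{r_n}$ lying between $\min\{\delta\phi\text{-type subsolution},w\}$ and $\max\{\overline u, w\}$, note $w_n\uparrow$ some solution $\ge w$, and identify the maximal such limit with $u$ by uniqueness of the monotone limit and the comparison result alluded to in the abstract. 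I expect the \textbf{main obstacle} to be the construction and gluing of the global supersolution across the transition region between the regime where \rf{hp2thmB} holds and the spectral region $\Omega_2$: one must simultaneously satisfy the differential inequality on both pieces and the ordering $K\psi\ge\varepsilon$ on the overlap, which constrains $\varepsilon$, $K$, and the precise choice of enlarged domain for the eigenfunction $\psi$; handling the non-smoothness at the gluing interface (via a weak formulation of supersolutions, or by a careful smoothing that does not destroy the inequalities) is the delicate technical point. The degeneracy of $b$ on $B_0$ is comparatively routine once \rf{hp1thmB} is in hand, since it is handled exactly as in the lead-up to Theorem A.
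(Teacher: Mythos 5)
Your plan has the signs reversed from the start, and this is not cosmetic: the construction you describe away from $\overline{\Omega}_1$ produces a \emph{subsolution}, not a supersolution, and the inequality you write to justify the supersolution claim is false. For a constant $0<\varepsilon\leq 1$ one has, on $M\setminus\overline{\Omega}_1$,
\begin{equation*}
a(x)\varepsilon-b(x)\varepsilon^{\sigma}+c(x)\varepsilon^{\tau}
\;\geq\; -a_-(x)\varepsilon-b(x)\varepsilon+c(x)\varepsilon^{\tau}
\;=\;c(x)\varepsilon\left[\varepsilon^{\tau-1}-\frac{a_-(x)+b(x)}{c(x)}\right]\;\geq\;0
\end{equation*}
once $\varepsilon\leq\mu^{1/(\tau-1)}$, $\mu=\sup_{M\setminus\overline{\Omega}_1}(a_-+b)/c$, precisely because $\tau<1$ makes $c\varepsilon^{\tau}$ blow up with the \emph{positive} sign as $\varepsilon\downarrow 0$. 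That is the \emph{sub}solution inequality. The inequality you wrote, $a\varepsilon-b\varepsilon^{\sigma}+c\varepsilon^{\tau}\leq\varepsilon(a_-+b\varepsilon^{\sigma-1})+c\varepsilon^{\tau}$, is equivalent to $a-a_-\leq 2b\varepsilon^{\sigma-1}$, which is false wherever $a_+>0$ and $b$ is small.

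This sign error inverts the whole scheme. The paper does not construct a global supersolution at all; it constructs a global positive \emph{subsolution} $u_-$: the small constant $\mu_*$ on $M\setminus\overline{\Omega}_1$, a positive subsolution on $\Omega_2$ coming from the spectral bound on $\Omega_2$ via the reciprocal substitution $u\mapsto 1/u$ (which turns $\Delta+a$ into $\Delta-a$, swaps $b$ with $c$, and sends $(\sigma,\tau)\mapsto(2-\tau,2-\sigma)$, exactly as in case ii) of Proposition \ref{suffsigma}), glued by a $\max$ on $\Omega_2\setminus\overline{\Omega}_1$. Then Proposition \ref{thmmax} is invoked, which takes a global \emph{subsolution} together with \rf{hp1thmB} and delivers the maximal solution as the monotone \emph{decreasing} limit of the solutions of the boundary blow-up problems on an exhaustion (Lemma \ref{blowup}), with Lemma \ref{compf} ensuring any other positive solution sits below each of them. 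Maximality, in particular, is not something to be tacked on at the end: it is built into using infinite boundary data, and your final paragraph does not recover it.

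You have also swapped the roles of the two spectral hypotheses. Condition \rf{hp1thmB}, $\lambda_1^{\Delta+a}(B_0)>0$, enters through Proposition \ref{thmmax} and Lemma \ref{lem1}, where it yields a local \emph{super}solution near $B_0$ by taking a \emph{large} multiple of the Dirichlet eigenfunction; a small multiple $\delta\phi$ is not a subsolution there, since $c u^{\tau}\to+\infty$ as $u\downarrow 0$ when $\tau<0$. Condition \rf{hp3thmB}, $\lambda_1^{\Delta-a}(\Omega_2)>0$, is what supplies the subsolution on $\Omega_2$, through the $1/u$ transform rather than by feeding an eigenfunction of $\Delta-a$ into the original equation. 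With the directions corrected (subsolution instead of supersolution, $\max$ gluing instead of $\min$, Proposition \ref{thmmax} instead of a supersolution-based iteration) your outline collapses onto the paper's argument, but as written it does not establish the inequalities it asserts.
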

	
The proof of the theorems is the content of Section 2. These existence results should be compared with those obtained at the end of the very recent paper \cite{MW}.\\
The remaining two sections of the paper are devoted to uniqueness of solutions. In particular, the results of Section 3 should be interpreted as \emph{Liouville-type theorems} and compared with those obtained in \cite{Ma}, \cite{Br}, and \cite{DAM}. The main differences with previous work in the literature is that our geometric requirement on the manifold consist only in a mild volume growth assumption for geodesic balls and in the fact that we allow for non constant coefficients $a(x)$, $b(x)$, $c(x)$ in equation \rf{lich}. In this last setting in general there are no \emph{trivial} solutions at hand. Thus, to provide a complete analysis of the problem in this case, we need to find an \emph{a priori} estimate and use it to detect a \emph{trivial} solution of \rf{lich}. In particular, Corollary \ref{corLio} is our main \emph{Liouville-type theorem}.\\
In Section 4 we analyze another uniqueness result, this time under a spectral assumption on the manifold, in the spirit of the very recent \cite{BMR} and \cite{BMR1}.

\section{Proof of Theorems A and B}
The main result of the paper is in fact the following proposition, whose proof will be the content of the section. At the end we will prove Theorems A and B as corollaries of the proposition.
	\begin{pro}\label{thmmax}
	Let $a(x)$, $b(x)$, $c(x)\in\mathrm{C}_{loc}^{0,\alpha}(M)$ for some $0<\alpha\leq1$. Assume \rf{signbc} and suppose that $b(x)$ is strictly positive outside some compact set. Furthermore, suppose
		\beq\label{Bo>0pr}
		\eig(B_0)>0
		\eeq
	with $L=\Delta+a(x)$. If $0<u_-\in\mathrm{C}^0(M)\cap\mathrm{W}^{1,2}_{loc}(M)$ is a global subsolution of \rf{lich} on $M$, then \rf{lich} has a maximal positive solution $u\in\mathrm{C}^2(M)$.
	\end{pro}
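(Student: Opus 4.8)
The plan is to obtain $u$ by a monotone iteration over a geodesic exhaustion $\{B_n\}$ of $M$, resting on two ingredients: an \emph{a priori} upper bound valid for \emph{every} positive subsolution of \rf{lich} (this is what forces maximality), and, on each $B_n$, a supersolution lying above both $u_-$ and that bound. The spectral hypothesis \rf{Bo>0pr} enters precisely to control subsolutions on a neighbourhood of the degeneracy set $B_0$ of \rf{B0}, while on $M\setminus B_0$ the bound comes from the strong absorption $-b u^\sigma$ afforded by $\sigma>1$ together with \rf{signbc}. As a preliminary step: since $b>0$ outside a compact set, $B_0=\{b=0\}$ is compact, so we may fix smooth open sets $B_0\subset\subset\Omega\subset\subset\Omega'$ with $\eig(\Omega')>0$ (such $\Omega'$ exists by \rf{Bo>0pr}; then $\eig(\Omega)>0$ as well, by monotonicity), and let $v>0$ on $\overline\Omega$ be the restriction of the first eigenfunction of $L=\Delta+a$ on $\Omega'$, so that $\Delta v+av\le-\ep v$ on $\overline\Omega$ with $\ep=\eig(\Omega')>0$.

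\emph{Step 1: the a priori bound.} On every compact subset of $M\setminus\Omega$ the coefficient $b$ is bounded below by a positive constant, being positive there (it vanishes only on $B_0\subset\subset\Omega$). From \rf{lich} and \rf{signbc}, a positive subsolution $w$ satisfies $\Delta w\ge\tfrac12 b\,w^\sigma$ wherever $w$ is large, because $\sigma>1>\tau$; a Keller--Osserman-type estimate then bounds $w$ from above on such compact sets by a constant \emph{independent of} $w$. In particular $w\le M_\partial$ on $\partial\Omega$ for a fixed $M_\partial$. Next choose $\Lambda$ so large that $\Lambda v$ is a supersolution of \rf{lich} on $\Omega$ (for which one only needs $-\ep\Lambda v+c(\Lambda v)^\tau\le0$ on $\overline\Omega$, true for $\Lambda\gg1$ since $\tau<1$ and $v$ is bounded below there), that $\Lambda v\ge M_\partial$ on $\partial\Omega$, and that $L$ still has positive first eigenvalue on $\Omega$ after adding to its potential a nonnegative constant $\delta=\delta(\Lambda)$ with $\delta\to0$ as $\Lambda\to+\infty$ (concretely $\delta\le\sup_{\overline\Omega}|\tau|\,c\,(\Lambda\min_{\overline\Omega}v)^{\tau-1}$). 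Then a comparison on the open set $\{w>\Lambda v\}\subseteq\Omega$ --- estimating $w^\tau-(\Lambda v)^\tau$ by the mean value theorem so as to reabsorb the sublinear term into the $\delta$-perturbation of $L$, and invoking the maximum principle for the perturbed operator on $\Omega$ --- forces $w\le\Lambda v$ on $\Omega$. Patching $\Lambda v$ on $\Omega$ with the Keller--Osserman bounds on $M\setminus\Omega$ produces a locally bounded $\Gamma\colon M\to(0,+\infty)$ with $w\le\Gamma$ for every positive subsolution $w$ of \rf{lich}; in particular $u_-\le\Gamma$, and every positive solution $w\in\smM$ of \rf{lich} satisfies $w\le\Gamma$.

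\emph{Step 2: exhaustion and monotone iteration.} Fix once and for all a positive $H\in\mathrm{C}^\infty(M)$ with $H\equiv1$ outside a compact neighbourhood of $B_0$ and $\Delta H+aH\le-\ep'H$ on a neighbourhood of $B_0$ for some $\ep'>0$; one builds $H$ from the eigenfunction near $B_0$, interpolated to the constant $1$ across a compact region disjoint from $B_0$, where $b$ is bounded below. For each $n$ and $\Lambda_n\gg1$, the function $\bar u_n:=\Lambda_n H$ is a supersolution of \rf{lich} on $B_n$: near $B_0$ because $-\ep'\Lambda_n H+c(\Lambda_n H)^\tau\le0$; on the interpolation region and on the rest of $\overline B_n$ because there $-b(\Lambda_n H)^\sigma$ (with $b$ bounded below) dominates the $O(\Lambda_n)$ and $O(\Lambda_n^\tau)$ contributions, $\sigma>1$. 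Enlarging $\Lambda_n$ we also arrange $\bar u_n\ge\max\{u_-,\Gamma\}$ on $\overline B_n$. Run the standard decreasing iteration on $B_n$ from $u^{(0)}=\bar u_n$, with boundary datum $\bar u_n|_{\partial B_n}$ and with a shift $K_n\gg1$ making $t\mapsto a(x)t-b(x)t^\sigma+c(x)t^\tau+K_n t$ nondecreasing on $[\min_{\overline B_n}u_-,\max_{\overline B_n}\bar u_n]$ --- an interval bounded away from $0$ exactly because $u_->0$ by hypothesis, so the non-Lipschitz behaviour of $t^\tau$ at $0$ plays no role. Elliptic estimates make the iteration converge to a solution $u_n$ of \rf{lich} on $B_n$ with $u_n=\bar u_n$ on $\partial B_n$ and $u_-\le u_n\le\bar u_n$, and, by the maximum principle for $\Delta-K_n$ at each step, $u_n$ dominates on $\overline B_n$ every subsolution of \rf{lich} that lies below $\bar u_n$. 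Since $u_-\le\Gamma\le\bar u_n$ on $\overline B_n$ and, for any positive solution $w\in\smM$ of \rf{lich}, also $w\le\Gamma\le\bar u_n$ on $\overline B_n$, we obtain $u_-\le u_n$ and $w\le u_n$ on $\overline B_n$.

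\emph{Step 3: the limit, and the hard point.} Because $u_-\le u_n\le\Gamma$ with $\Gamma$ locally bounded, the $u_n$ stay in a compact subset of $(0,+\infty)$ on every ball; since $a,b,c\in\mathrm{C}^{0,\alpha}_{loc}(M)$, Schauder estimates give locally uniform $\mathrm{C}^{2,\alpha}$ bounds, hence a subsequence $u_{n_k}\to u$ in $\mathrm{C}^2_{loc}(M)$. Then $u$ solves \rf{lich}, $u\ge u_->0$, so $u\in\mathrm{C}^2(M)$ is a positive solution; and for any positive solution $w\in\smM$ of \rf{lich}, letting $k\to\infty$ in $w\le u_{n_k}$ (which holds on $B_{n_k}$) gives $w\le u$ on $M$, so $u$ is maximal. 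The main obstacle is Step 1 near $B_0$: turning the purely spectral smallness $\eig(B_0)>0$ into a pointwise bound on subsolutions in the region where the absorption $b\,u^\sigma$ degenerates forces the two-stage device --- a Keller--Osserman estimate on a collar to supply uniform boundary data, then a maximum-principle comparison with a multiple of the eigenfunction in which the sublinear term $c\,u^\tau$ must be kept a perturbation too small to destroy positivity of the first eigenvalue --- and it is the uniformity of all these bounds in the (sub)solution that ultimately yields maximality.
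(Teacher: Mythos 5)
Your overall strategy is sound and rests on the same two analytic pillars as the paper (a Keller--Osserman bound where $b>0$, as in Lemma \ref{aprBT}, and a comparison with a large multiple of the first eigenfunction of $\Delta+a$ near $B_0$, made possible by $\eig(B_0)>0$ and $\tau<1$), but there is a genuine gap in the maximality mechanism of Step 2. You justify the claim that the limit $u_n$ of the iteration dominates ``every subsolution of \rf{lich} that lies below $\bar u_n$'' by the maximum principle for $\Delta-K_n$ at each iteration step, where $K_n$ is chosen so that $t\mapsto a(x)t-b(x)t^{\sigma}+c(x)t^{\tau}+K_nt$ is nondecreasing on $\left[\min_{\overline{B_n}}u_-,\,\max_{\overline{B_n}}\bar u_n\right]$. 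That inductive comparison, however, needs monotonicity on an interval containing the range of the competitor as well. A second positive solution $w$ of \rf{lich} is in no way constrained to stay above $\min_{\overline{B_n}}u_-$ (the proposition does not assume, nor is it known beforehand, that other solutions dominate $u_-$), and when $\tau<0$ --- which is exactly the Lichnerowicz exponent range --- no choice of $K_n$ restores monotonicity near $t=0$, since the derivative contains the term $\tau c(x)t^{\tau-1}\rightarrow-\infty$ as $t\rightarrow0^{+}$ wherever $c>0$. So the inequality ``$w\le u_n$ on $\overline{B_n}$'' is not proved as stated, and this is precisely what maximality hinges on.

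The gap is repairable with a tool you already use implicitly: the comparison principle for nonlinearities $f(x,t)=a(x)t-b(x)t^{\sigma}+c(x)t^{\tau}$ with $t\mapsto f(x,t)/t$ nonincreasing (the quotient trick of Lemma \ref{compf} and Remark \ref{rmkmonof}), which requires only positivity of the sub- and supersolution and the boundary inequality. Since $w\le\Gamma\le\bar u_n=u_n$ on $\partial B_n$, this yields $w\le u_n$ on $B_n$ directly, with no smallness restriction on $w$; your Step 1 device is unaffected, because on $\left\{w>\Lambda v\right\}$ both functions stay above $\Lambda\min_{\overline{\Omega}}v$. Two further remarks. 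First, asserting $u_-\le\Gamma$ via the Keller--Osserman maximum-point argument is not justified for $u_-\in\mathrm{C}^0(M)\cap\mathrm{W}^{1,2}_{loc}(M)$, but you never need it: you impose $\bar u_n\ge u_-$ directly. Second, once corrected, your route is genuinely different from the paper's: the paper obtains maximality by constructing boundary blow-up solutions ($u=+\infty$ on $\partial D_k$, Lemma \ref{blowup}) on an exhaustion and letting them decrease, while you replace the blow-up solutions by a uniform a priori bound $\Gamma$ for all competitors together with Dirichlet data chosen above $\Gamma$; the core ingredients (Lemma \ref{aprBT}, the eigenfunction-plus-constant gluing of Lemma \ref{lem1}, monotone iteration with $u_-$ as lower barrier, so that property $\left(\Sigma\right)$ is not needed separately) coincide, and your version trades the blow-up construction for the burden of making every bound uniform in the solution.
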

The proof of Proposition \ref{thmmax} is divided into several steps. In what follows we keep the notations of the proposition.
	\begin{lem}\label{lem1}
	Let $\overline{a}(x)$, $b(x)$, $c(x)\in\mathrm{C}_{loc}^{0,\alpha}(M)$ for some $0<\alpha\leq1$, and let \rf{signbc} hold. Suppose that $B_0$ is bounded and
		\beq
		\lambda_1^{\overline{L}}(B_0)>0
		\eeq
	where $\overline{L}=\Delta+\overline{a}(x)$. If $\Omega$ is a bounded open set such that $B_0\subset\Omega$, then there exists $v_+$ solution of
		\beq\label{v+}
		\begin{cases}
		\Delta v_++\overline{a}(x)v_+-b(x)v_+^{\sigma}+c(x)v_+^{\tau}\leq0 & \hbox{on $\Omega$}\\
		v_+>0 & \hbox{on $\overline{\Omega}$}.
		\end{cases}
		\eeq
	\end{lem}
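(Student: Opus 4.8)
The plan is to build $v_+$ as a (super)solution on a slightly larger region than $B_0$ and then glue it to a solution of the truncated equation where $b$ is strictly positive, exploiting the sign condition $c(x)\geq 0$ to get the inequality in the right direction. Concretely, since $\lambda_1^{\overline L}(B_0)>0$ and $B_0$ is bounded, the definition \rf{eiB} provides an open bounded set $\Omega'$ with smooth boundary, $B_0\subset\Omega'\subset\subset\Omega$, such that $\lambda_1^{\overline L}(\Omega')>0$. On $\Omega'$ I would take the first Dirichlet eigenfunction $w>0$ of $\overline L$, normalized so that it is small, solving $\Delta w+\overline a(x)w+\lambda_1^{\overline L}(\Omega')w=0$ with $w=0$ on $\partial\Omega'$. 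Because $\lambda_1^{\overline L}(\Omega')>0$, we have $\Delta w+\overline a(x)w=-\lambda_1^{\overline L}(\Omega')w\leq 0$ on $\Omega'$, so $w$ is a positive supersolution of the \emph{linear} operator; adding $c(x)w^\tau\geq 0$ and $-b(x)w^\sigma\leq 0$ one has to be slightly careful, but since $c\geq 0$ the bad term is $+c(x)w^\tau$, which we kill by choosing $w$ uniformly small: for $w$ small, $-\lambda_1^{\overline L}(\Omega')w + c(x)w^\tau - b(x)w^\sigma \leq 0$ fails near the boundary where $w\to 0$ and $w^\tau/w\to\infty$. So the eigenfunction alone does not immediately work on all of $\Omega'$.

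The fix is to work on $\Omega'\setminus \overline{B_0}$ and on a neighborhood of $B_0$ separately, or better, to use the standard device: on $\Omega$, outside a neighborhood of $B_0$ the coefficient $b$ is bounded below by a positive constant, so there one can bound a supersolution of \rf{lich} from the algebraic balance $-b(x)t^\sigma + (\overline a_-(x)+c(x)t^{\tau})t\leq 0$ for $t$ large; on a neighborhood $U$ of $B_0$ with $\overline U\subset\Omega'$ one uses the positive eigenfunction $w$ of $\overline L$ on $\Omega'$ which satisfies $\overline Lw\leq 0$ and $w$ is bounded away from $0$ on $\overline U$. Then take $v_+ = \Lambda\,w$ restricted to a slightly shrunk domain for $\Lambda$ a large constant, and patch with the large constant supersolution on the region where $b>0$, taking the minimum of the two; the minimum of supersolutions is a supersolution in the viscosity/weak sense, and one arranges the constants so that on the overlap the eigenfunction branch is the smaller one near $\partial\Omega'$ (where $w$ is small) — actually one wants $v_+$ to stay positive on $\overline\Omega$, so the minimum must be redefined only on $\Omega'$ and equal to the constant outside. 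The cleanest formulation: set $v_+=\min\{\Lambda w,\ K\}$ on $\Omega'$ and $v_+=K$ on $\Omega\setminus\Omega'$, where $K$ is the constant supersolution valid on $\{b\ge \beta>0\}\supset \Omega\setminus U$ and $\Lambda$ is chosen large enough that $\Lambda w > K$ on $\overline U$ so that near $B_0$ we are on the constant branch — wait, that's backwards; one needs $\Lambda w$ to dominate near $B_0$ where $b$ may vanish. I would therefore choose $\Lambda$ large so that $\Lambda w\ge K$ on $\overline{U}$, use $v_+=\min\{\Lambda w,K\}=K$ there — no. Let me instead keep $v_+=\Lambda w$ on a neighborhood of $B_0$ and $v_+ = $ (constant or the algebraic supersolution) away from it, and check the inequality on each piece plus the fact that taking a minimum preserves the supersolution property.

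Here are the key steps in order. First, extract $\Omega'$ smooth with $B_0\subset\Omega'\subset\subset\Omega$ and $\lambda_1^{\overline L}(\Omega')=:\mu>0$, and let $w>0$ be the associated Dirichlet eigenfunction on $\Omega'$. Second, pick $\beta>0$ and an open $U$ with $B_0\subset U\subset\subset\Omega'$ such that $b(x)\geq\beta$ on $\Omega\setminus U$ (possible since $B_0$ is exactly the zero set of $b$ restricted suitably, and $b\ge 0$ with $b$ continuous; more precisely $b>0$ on the compact $\overline{\Omega}\setminus U$). Third, on $\Omega\setminus U$, find a constant $K>0$ large with $-\beta K^\sigma + \big(\sup_{\overline\Omega}\overline a_- + \sup_{\overline\Omega} c\cdot K^{\tau-1}\big)K\leq 0$, which holds for $K$ large since $\sigma>1>\tau$; then the constant $K$ satisfies $\Delta K+\overline a K - bK^\sigma + cK^\tau\leq 0$ on $\Omega\setminus U$. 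Fourth, on $\Omega'$, choose $\Lambda$ large so that $\Lambda w$ satisfies $\Delta(\Lambda w)+\overline a(\Lambda w)-b(\Lambda w)^\sigma + c(\Lambda w)^\tau = -\mu\Lambda w - b\Lambda^\sigma w^\sigma + c\Lambda^\tau w^\tau\leq 0$ wherever $w$ is bounded below, i.e. on $\overline U$ and more generally on $\{w\geq\delta\}$, by the same $\sigma>1>\tau$ scaling argument applied pointwise in $w$; simultaneously arrange $\Lambda w > K$ on $\partial U$ and $\Lambda\delta$ small enough... Fifth, define $v_+=\Lambda w$ on $\{x\in\Omega': \Lambda w(x)\le K\}$ near $\partial\Omega'$ won't be positive on $\overline\Omega$; so instead set $v_+=\min\{\Lambda w, K\}$ on $\Omega'$ and $v_+=K$ on $\Omega\setminus\Omega'$ — this is continuous, positive on $\overline\Omega$ (equals $K>0$ near $\partial\Omega'$ since $\Lambda w\to 0 <K$ there, hence $v_+=\min=\Lambda w$... no, $\min\{\text{small},K\}=\text{small}\to 0$). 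The honest resolution, which I expect to be the main obstacle, is precisely this matching: one must take $v_+$ equal to the eigenfunction branch near $B_0$ (to handle $b=0$) and equal to a positive constant near $\partial\Omega$ (to stay positive), so $v_+$ should be $\max\{\Lambda w, \epsilon\}$-type, or one enlarges $w$ to a positive supersolution on all of $\Omega'$ by solving an auxiliary Dirichlet problem with positive boundary data — e.g. solve $\overline L z=0$ on $\Omega'$ with $z=1$ on $\partial\Omega'$; by $\mu>0$ the maximum principle gives $0<z\le 1$, $\overline L z=0$, and then scale. The main obstacle is thus getting a \emph{positive on $\overline\Omega$} supersolution from the eigenvalue condition, which is handled by replacing the eigenfunction with the solution of $\overline L z=0$, $z|_{\partial\Omega'}=1$ (whose positivity on $\overline{\Omega'}$ follows from $\lambda_1^{\overline L}(\Omega')>0$ via the weak maximum principle), extending by the constant supersolution outside $\Omega'$, taking the pointwise minimum, and verifying that the minimum of two continuous supersolutions is a supersolution in the $\mathrm{W}^{1,2}_{loc}$ weak sense — a standard fact — after which the matching constants are chosen via $\sigma>1>\tau$ exactly as above.
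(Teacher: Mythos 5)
Your proposal runs into the same matching difficulty several times and in the end does not resolve it correctly. You are right that near $B_0$ one should use a function coming from the positivity of $\lambda_1^{\overline L}$, and away from $B_0$ (where $b$ is bounded below) one should use the algebraic balance between $-b\,t^\sigma$ and $\overline a\,t+c\,t^\tau$. You also correctly identify that the Dirichlet eigenfunction $w$ of $\overline L$ vanishes on $\partial\Omega'$, so any construction of the form $\min\{\Lambda w, K\}$ collapses to $0$ near $\partial\Omega'$ and fails to be positive on $\overline\Omega$.

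The fix you settle on, however, has a genuine gap. Replacing the eigenfunction $w$ (which satisfies $\overline L w=-\lambda_1^{\overline L}(\Omega')\,w<0$) by a function $z$ with $\overline L z=0$, $z|_{\partial\Omega'}=1$, throws away precisely the negative slack that the construction needs. On $B_0$ you have $b(x)=0$, so for $\Lambda z$ one computes
\begin{equation*}
\Delta(\Lambda z)+\overline a(x)(\Lambda z)-b(x)(\Lambda z)^\sigma+c(x)(\Lambda z)^\tau
= c(x)(\Lambda z)^\tau\ge 0\,,
\end{equation*}
with strict inequality wherever $c>0$ on $B_0$. Nothing in the hypotheses forces $c$ to vanish on $B_0$, so $\Lambda z$ is not a supersolution there, and no amount of choosing $\Lambda$ or matching constants repairs this — the failure is pointwise in $B_0$ and independent of $\Lambda$. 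The only mechanism available to absorb $c(x)(\gamma u_1)^\tau$ on $B_0$ is the term $-\lambda_1^{\overline L}(D)\,\gamma u_1$ coming from the eigenfunction, since $\tau<1$ makes $c(x)(\gamma u_1)^{\tau-1}\to 0$ as $\gamma\to+\infty$ (here one uses $u_1>0$ on the compact set $\overline{D'}$). You cannot forgo this and keep the proof.

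The paper's route avoids the min/glue problem altogether: it blends smoothly via a cut-off, setting $u=\gamma\bigl(\psi u_1+(1-\psi)\Lambda_0\bigr)$ with $\psi\equiv1$ on a neighbourhood $D'$ of $B_0$ and $\operatorname{supp}\psi\subset D$, where $B_0\subset\subset D'\subset\subset D\subset\subset\Omega$. The supersolution inequality is then verified region by region: on $\overline{D'}$ using the eigenvalue slack as above; on $\Omega\setminus D$ where $u$ is the constant $\gamma\Lambda_0$ and the algebraic balance applies since $b\ge\beta>0$; and on the transition band $D\setminus\overline{D'}$, where both $b$ is bounded below and $u$ is smooth with $(\Delta+\overline a)u\le\gamma C_0$, so the inequality follows for $\gamma$ large from $C_0-\varepsilon\gamma^{\sigma-1}+E\gamma^{\tau-1}\le 0$. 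The single parameter $\gamma$, chosen large at the end, handles all three regions simultaneously. If you restructure your argument to use this smooth blend in a band where $b$ is bounded below, and keep the eigenfunction (not its harmonic replacement) near $B_0$, the proof goes through.
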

		\begin{proof}
		Let $D$ and $D'$ be bounded open domains such that
			\beqs
			B_0\subset\subset D'\subset\subset D\subset\subset\Omega\,,
			\eeqs
		and $\lambda_1^{\overline{L}}(D)>0$. Let $u_1$ be a positive solution of 
			\beqs
			\begin{cases}
			\Delta u_1+\overline{a}(x)u_1+\lambda_1^{\overline{L}}(D)u_1=0 & \hbox{on $D$}\\
			u_1=0 & \hbox{on $\partial D$}.
			\end{cases}
			\eeqs
		Since $b(x)>0$ on $M\setminus B_0$ and $\overline{\Omega}\setminus D'\subset\subset M\setminus B_0$,
			\beqs
			\beta=\inf_{\overline{\Omega}\setminus D'}b(x)>0\,.
			\eeqs
		Define
			\beqs
			\alpha=\sup_{\overline{\Omega}\setminus D'}\overline{a}(x)\,, \quad\quad \delta=\sup_{\overline{\Omega}\setminus D'}c(x)\,,
			\eeqs
		and note that $\alpha$, $\delta<+\infty$ since $\Omega$ is bounded. Let $U$ be a positive constant. Then
			\beqs
			\begin{aligned}
			\Delta U+\overline{a}(x)U-b(x)U^{\sigma}+c(x)U^{\tau} & = U\left(\overline{a}(x)-b(x)U^{\sigma-1}+c(x)U^{\tau-1}\right)\\
			& \leq U\left(\alpha-\beta U^{\sigma-1}+\delta U^{\tau-1}\right)
			\end{aligned}
			\eeqs
		on $\Omega\setminus D'$. We observe that the RHS of the above is non-positive for $U$ sufficiently large, say
			\beq\label{Lambda0}
			U\geq\Lambda_0>0\,.
			\eeq
		Next we choose a cut-off function $\psi\in\mathrm{C}_0^{\infty}(M)$ such that $0\leq\psi\leq1$, $\psi\equiv1$ on $D'$, and $\supp\psi\subset D$.
		Fix a positive constant $\gamma$ and define
			\beq
			u=\gamma\left(\psi u_1+(1-\psi)\Lambda_0\right)\,.
			\eeq
		Since $b(x)\geq0$ and $\eigb(D)>0$, on $\overline{D'}$ we have
			\beqs
			\begin{aligned}
			\overline{L}u-b(x)u^{\sigma}+c(x)u^{\tau} & = \overline{L}\left(\gamma u_1\right)-b(x)\left(\gamma u_1\right)^{\sigma}+c(x)\left(\gamma u_1\right)^{\tau}\\
			& = -\left[\eigb(D)\left(\gamma u_1\right)+b(x)\left(\gamma u_1\right)^{\sigma}-c(x)\left(\gamma u_1\right)^{\tau}\right]\\
			& = -\left(\gamma u_1\right)\left[\eigb(D)+b(x)\left(\gamma u_1\right)^{\sigma-1}-c(x)\left(\gamma u_1\right)^{\tau-1}\right].
			\end{aligned}
			\eeqs
		For the RHS of the above to be non-positive on $\overline{D'}$ it is sufficient to have
			\beq\label{gamlam}
			c(x)\left(\gamma u_1\right)^{\tau-1}\leq\eigb(D)\quad\quad\hbox{on $\overline{D'}$}.
			\eeq
		Towards this aim we note that, since $\overline{D'}$ is compact, $u_1>0$ on $\overline{D'}$, and $\tau<1$, then \rf{gamlam} is satisfied for
			\beq\label{Gamma0}
			\gamma\geq\Gamma_0=\Gamma_0(u_1)>0\,,
			\eeq
		sufficiently large.	We now consider $\Omega\setminus D$, since $\supp\psi\subset D$, it follows that $u=\gamma\Lambda_0$ there. Thus, using $\Omega\setminus D\subset\Omega\setminus D'$, from \rf{Lambda0} it follows that
			\beqs
			\Delta u+\overline{a}(x)u-b(x)u^{\sigma}+c(x)u^{\tau}\leq0\quad\quad\hbox{on $\Omega\setminus D$}\,,
			\eeqs 
		is satisfied if we choose $\gamma\geq 1$, indeed in this case
			\beqs
			\gamma\Lambda_0\geq\Lambda_0\,.
			\eeqs
		It remains to analyze the situation on $D\setminus\overline{D'}$. First of all we note that, by standard elliptic regularity theory, $u_1\in\mathrm{C}^2(D)$. Thus, since $\supp\psi\subset D$, it follows that $u\in\mathrm{C}^2(\Omega)$, in particular this implies that there exists a positive constant $C_0$ such that
			\beqs
			\left(\Delta+\overline{a}(x)\right)u\leq\gamma C_0\quad\quad\hbox{on $D\setminus\overline{D'}$}\,.
			\eeqs 
		Thus on $D\setminus\overline{D'}$ we have
			\beqs
			\begin{aligned}
			\Delta u+\overline{a}(x)u-b(x)u^{\sigma}+c(x)u^{\tau} & \leq \gamma C_0-b(x)\gamma^{\sigma}\left(\psi u_1+\left(1-\psi\right)\Lambda_0\right)^{\sigma}\\
			& \quad\quad+c(x)\gamma^{\tau}\left(\psi u_1+\left(1-\psi\right)\Lambda_0\right)^{\tau}\,.
			\end{aligned}
			\eeqs
		Now there exists constants $\ep$ and $E$ such that
			\beqs
			\begin{aligned}
			& \inf_{D\setminus\overline{D'}}b(x)\left(\psi u_1+\left(1-\psi\right)\Lambda_0\right)^{\sigma}=\ep>0\,,\\
			& \sup_{D\setminus\overline{D'}}c(x)\left(\psi u_1+\left(1-\psi\right)\Lambda_0\right)^{\tau}=E<+\infty\,.
			\end{aligned}
			\eeqs
		Therefore, on $D\setminus\overline{D'}$
			\beqs
			\Delta u+\overline{a}(x)u-b(x)u^{\sigma}+c(x)u^{\tau}\leq\gamma\left(C_0-\ep\gamma^{\sigma-1}+E\gamma^{\tau-1}\right)\,.
			\eeqs
		Since $\sigma>1$ and $\tau<1$, it follows that there exists a positive constant $\Gamma_1$ depending only on $D$ and $D'$ such that
			\beqs
			C_0-\ep\gamma^{\sigma-1}+E\gamma^{\tau-1}\leq0
			\eeqs
		for $\gamma\geq\Gamma_1$.\\
		Thus, by choosing 
			\beqs
			\gamma\geq\max\left\{1,\Gamma_0,\Gamma_1\right\}
			\eeqs
		$u$ is the desidered supersolution $v_+$ of \rf{v+} on $\Omega$.
		\end{proof}
		
	\begin{defi}\label{defsigma}
	We say that the \emph{property $\left(\Sigma\right)$} holds on $M$ (for equation \rf{lich}) if there exists $R_o\in\R^+$ such that for all $R\geq R_o$ there exists a solution $\varphi\in\mathrm{C}^0(\overline{B_R})\cap\mathrm{W}^{1,2}_{loc}(B_R)$ of
		\beq\label{sigpro}
		\begin{cases}
		\Delta \varphi+a(x)\varphi- b(x)\varphi^{\sigma}+c(x)\varphi^{\tau}\geq 0 & \hbox{on $B_{R}$}\\
		\varphi\geq 0 & \hbox{on $\overline{B_{R}}$}\,.
		\end{cases}
		\eeq
	When $\tau<0$, in order to avoid singularities, in the equation above it is assumed that $\supp c(x)\subseteq\supp \varphi$.\\
	In Proposition \ref{suffsigma} below we shall give some sufficient conditions for the validity of property $\left(\Sigma\right)$.
	\end{defi}
	
	\begin{lem}\label{lem2}
	Let $a(x)$, $b(x)$, $c(x)\in\mathrm{C}_{loc}^{0,\alpha}(M)$ for some $0<\alpha\leq1$, and let \rf{signbc} hold. Suppose that $B_0$ is bounded and $\eig(B_0)>0$ with $L=\Delta+a(x)$. Furthermore assume that property $\left(\Sigma\right)$ holds on $M$. Let $\Omega$ be a bounded domain such that $B_0\subset\Omega$. Then, for each $n\in\N$, there exists a solution $u$ of the problem
		\beq\label{nlem1}
		\begin{cases}
		\Delta u+a(x)u-b(x)u^{\sigma}+c(x)u^{\tau}=0 & \hbox{on $\Omega$}\\
		u>0 & \hbox{on $\Omega$}\\
		u=n & \hbox{on $\partial\Omega$}.
		\end{cases}
		\eeq
	\end{lem}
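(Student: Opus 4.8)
The plan is to solve the Dirichlet problem \rf{nlem1} by the method of sub- and supersolutions: Lemma \ref{lem1} will supply the supersolution, property $(\Sigma)$ the subsolution, and the strong maximum principle will give strict positivity.

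\emph{Supersolution.} Since $\eig(B_0)>0$ and $B_0\subset\Omega$, Lemma \ref{lem1} applied with $\overline{a}(x)=a(x)$ produces $v_+$, continuous and strictly positive on the compact set $\overline{\Omega}$, with $\Delta v_++a(x)v_+-b(x)v_+^\sigma+c(x)v_+^\tau\le0$ on $\Omega$. For every $\lambda\ge1$, using $b,c\ge0$, $\sigma>1$ and $\tau<1$,
	\beqs
	\Delta(\lambda v_+)+a(\lambda v_+)-b(\lambda v_+)^\sigma+c(\lambda v_+)^\tau
	\;\le\;\lambda\bigl(\Delta v_++a v_+-b v_+^\sigma+c v_+^\tau\bigr)\;\le\;0 ,
	\eeqs
because $(\lambda^\sigma-\lambda)b v_+^\sigma\ge0$ while $(\lambda^\tau-\lambda)c v_+^\tau\le0$. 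Setting $\overline u:=\lambda_n v_+$ with $\lambda_n:=\max\bigl\{1,\,n/\inf_{\overline{\Omega}}v_+\bigr\}$, the function $\overline u$ is a positive supersolution of \rf{lich} on $\Omega$ with $\overline u\ge n$ on $\partial\Omega$.

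\emph{Subsolution.} Pick $R\ge R_o$ large enough that $\overline{\Omega}\subset B_R$ and let $\varphi\in\mathrm{C}^0(\overline{B_R})\cap\mathrm{W}^{1,2}_{loc}(B_R)$ be the nonnegative solution of \rf{sigpro} provided by property $(\Sigma)$; its restriction to $\Omega$ is a nonnegative subsolution of \rf{lich} there. A computation dual to the previous one shows that $\mu\varphi$ is again a subsolution for each $\mu\in(0,1]$, the correction terms $(\mu-\mu^\sigma)b\varphi^\sigma$ and $(\mu^\tau-\mu)c\varphi^\tau$ being now both $\ge0$, while scaling by $\mu>0$ does not change $\supp\varphi$, so the inclusion $\supp c\subseteq\supp\varphi$ required when $\tau<0$ survives. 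As $\inf_{\overline{\Omega}}\overline u>0$ and $\varphi$ is bounded, we may choose $\mu_n\in(0,1]$ so small that $\underline u:=\mu_n\varphi$ satisfies $\underline u\le\overline u$ on $\overline{\Omega}$ and $\underline u\le n$ on $\partial\Omega$; if $\varphi\equiv0$ we simply take $\underline u:=0$, which is legitimate because then $\supp c=\emptyset$ when $\tau<0$ and $0$ is a subsolution of \rf{lich} when $\tau\ge0$. In all cases $0\le\underline u\le\overline u$ on $\overline{\Omega}$ and $\underline u\le n\le\overline u$ on $\partial\Omega$.

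\emph{Conclusion.} With this ordered pair of barriers the usual sub/supersolution argument yields a solution of \rf{nlem1} with $\underline u\le u\le\overline u$: run the monotone iteration $u_0=\overline u$, $\Delta u_{k+1}-Ku_{k+1}=-Ku_k-f(x,u_k)$ on $\Omega$, $u_{k+1}=n$ on $\partial\Omega$, where $f(x,s)=a(x)s-b(x)s^\sigma+c(x)s^\tau$ and $K>0$ is fixed so that $s\mapsto Ks+f(x,s)$ is nondecreasing on the range $[\inf_{\overline{\Omega}}\underline u,\sup_{\overline{\Omega}}\overline u]$; since $a,b,c\in\mathrm{C}_{loc}^{0,\alpha}$ the Schauder estimates give $u_k\in\mathrm{C}_{loc}^{2,\alpha}(\Omega)\cap\mathrm{C}^0(\overline{\Omega})$, the $u_k$ decrease and stay squeezed between $\underline u$ and $\overline u$, and the monotone limit $u$ is a classical solution. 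Finally $u\ge\underline u\ge0$ with $u=n>0$ on $\partial\Omega$, so, rewriting \rf{lich} as $\Delta u+\bigl(a(x)-b(x)u^{\sigma-1}\bigr)u=-c(x)u^\tau\le0$ on $\Omega$ (the right-hand side read as $0$ off $\supp c$), the strong maximum principle excludes an interior zero of $u$ and gives $u>0$ on $\Omega$. I expect the delicate point to be this last step: making the iteration run rigorously when $f(x,\cdot)$ is only continuous (not Lipschitz, as $\tau<1$) and, for $\tau<0$, genuinely singular at $s=0$. This is exactly where the support condition built into property $(\Sigma)$ — forcing $c$ to vanish where $\varphi$ does — is indispensable, both to keep $c(x)u_k^\tau$ controlled along the iteration and to keep the limiting solution positive on $\supp c$.
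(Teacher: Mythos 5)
Your proof is correct and in fact a genuine simplification of the paper's. The paper takes a more roundabout route: it first introduces a modified coefficient $\overline{a}(x)=\rho(x)a(x)+N(1-\rho(x))$ with $\overline{a}\geq a$ and $\overline{a}$ large outside $\overline{\Omega}$, chosen so that $\eigb(B_R)<0$ on a large ball $B_R\supset\overline{\Omega}$; it then builds a strictly positive subsolution of the \emph{modified} equation from an eigenfunction of $\widetilde{L}=\Delta+\overline{a}-\gamma[b-c]$, runs a first monotone iteration on $B_R$ to produce a strictly positive solution $w$ of the modified equation, observes that $\overline{a}\geq a$ and $w>0$ make $w$ a supersolution of the original equation, scales it to $w_+=\zeta w\geq n$ on $\partial\Omega$, and only at that point brings in the property-$(\Sigma)$ function as a scaled subsolution $w_-$ before running the final iteration on $\Omega$. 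You observe, correctly, that this intermediate detour is unnecessary: Lemma \ref{lem1} applied with $\overline{a}=a$ on $\Omega$ already yields a supersolution $v_+$ that is strictly positive on the compact set $\overline{\Omega}$, and your scaling lemma (that $\lambda v_+$ remains a supersolution for $\lambda\geq1$, and $\mu\varphi$ remains a subsolution for $\mu\in(0,1]$, by the signs of $b$, $c$ and the convexity/concavity coming from $\sigma>1$, $\tau<1$) produces an ordered pair $\underline{u}\leq\overline{u}$ on $\overline{\Omega}$ with $\underline{u}\leq n\leq\overline{u}$ on $\partial\Omega$ directly. The final steps — monotone iteration between a nonnegative subsolution and a strictly positive supersolution, then strong maximum principle for strict positivity — are the same in both arguments, and both rely on the support condition in the definition of property $(\Sigma)$ to tame the singularity of $s\mapsto s^\tau$ at $0$ when $\tau<0$; you flag this subtlety more explicitly than the paper, which simply cites the iteration scheme and the strong maximum principle. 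In short: same barriers (Lemma \ref{lem1} above, $(\Sigma)$ below), same iteration, but you bypass the modified-coefficient construction entirely, which streamlines the argument at no loss of generality.
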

		\begin{proof}
		By the definition of $\eig(B_0)$ and the assumption of positivity, there exists an open domain $D$ with smooth boundary such that $B_0\subset D\subset\subset\Omega$ and $\eig(D)>0$. Let $\rho\in\smM$ be a cut-off function such that $0\leq\rho\leq 1$, $\rho\equiv1$ on $D$, $\rho\equiv0$ on $M\setminus\overline{\Omega}$. Fix $N\geq\max\left\{\sup_{\overline{\Omega}}\left|a(x)\right|+1\,,\eigd(M\setminus\overline{\Omega})+1\right\}$.\\
		Define
			\beqs
			\overline{a}(x)=\rho(x)a(x)+N\left(1-\rho(x)\right)
			\eeqs
		and consider the operator $\overline{L}=\Delta+\overline{a}(x)$. Since $\overline{a}(x)=a(x)$ on $D$,
			\beq\label{LB0}
			\eigb(B_0)=\eig(B_0)>0\,.
			\eeq
		Furthermore, from $N\geq\eigd(M\setminus\overline{\Omega})+1$, we deduce $\eigb(M\setminus\overline{\Omega})\leq-1$ and it follows that there exists $R>0$ sufficiently large such that 	
			\beq
			\overline{\Omega}\subset B_R\quad\quad\hbox{and}\quad\quad\eigb(B_R)<0\,.
			\eeq
		Fix $\ep>0$. Then $\eigb(B_{R+\ep})<0$. Let $\varphi$ be the normalized eigenfunction of $\overline{L}$ on $B_{R+\ep}$ relative to the eigenvalue $\eigb(B_{R+\ep})$ (here, without loss of generality, that is, possibly substituting $B_{R+\ep}$ with a slighly larger open set with smooth boundary, we are supposing $\partial B_{R+\ep}$ smooth) so that
			\beq
			\begin{cases}
			\overline{L}\varphi+\eigb(B_{R+\ep})\varphi=0 & \hbox{on $B_{R+\ep}$}\\
			\varphi\equiv 0 & \hbox{on $\partial B_{R+\ep}$}\\
			\varphi>0 & \hbox{on $B_{R+\ep}$}\\
			\|\varphi\|_{\mathrm{L}^2(B_{R+\ep})}=1 & \\
			\end{cases}
			\eeq
		We fix $\gamma>0$ sufficiently small that
			\beqs
			\int_{B_{R+\ep}}\left|\nabla\varphi\right|^2-\overline{a}(x)+\gamma\left[b(x)-c(x)\right]\varphi^2=\eigb(B_{R+\ep})+\int_{B_{R+\ep}}\gamma\left[b(x)-c(x)\right]\varphi^2<0\,.
			\eeqs
		This shows that the operator $\widetilde{L}=\Delta+\overline{a}(x)-\gamma\left[b(x)-c(x)\right]$ satisfies $\lambda_1^{\widetilde{L}}(B_{R+\ep})<0$. Let $\psi$ be a positive eigenfunction corresponding to $\lambda_1^{\widetilde{L}}(B_{R+\ep})$. The $\psi$ satisfies
			\beqs
			\begin{cases}
			\Delta\psi+\overline{a}(x)\psi-\gamma b(x)\psi+\gamma c(x)\psi\geq 0 & \hbox{on $B_{R+\ep}$}\\
			\psi\equiv 0 & \hbox{on $\partial B_{R+\ep}$}\\
			\psi>0 & \hbox{on $B_{R+\ep}$}\,.
			\end{cases}
			\eeqs
		Thus
			\beq\label{psiR}
			\begin{cases}
			\Delta\psi+\overline{a}(x)\psi-\gamma b(x)\psi+\gamma c(x)\psi\geq 0 & \hbox{on $B_{R}$}\\
			\psi>0 & \hbox{on $\overline{B_{R}}$}\,.
			\end{cases}
			\eeq
		Let $\mu>0$ and define $v_-=\mu\psi$ on $B_R$. Choosing 
			\beqs
			\mu\leq\min\left\{\gamma^{\frac{1}{\sigma-1}}\left(\sup_{B_R}\psi\right)^{-1},\,\gamma^{\frac{1}{\tau-1}}\left(\sup_{B_R}\psi\right)^{-1}\right\}
			\eeqs
		we have	
			\beq\label{psiiii}
			\hbox{i) } \gamma\mu^{1-\sigma}\psi^{1-\sigma}\geq 1\quad\quad\hbox{and}\quad\quad\hbox{ii) } \gamma\mu^{1-\tau}\psi^{1-\tau}\leq 1
			\eeq
		on $B_R$. Hence, using \rf{psiR} and \rf{psiiii} we deduce
			\beqs
			\begin{aligned}
			0 & \leq\Delta v_-+\overline{a}(x)v_--\gamma b(x)v_-^{\sigma}\left(\mu^{1-\sigma}\psi^{1-\sigma}\right)+\gamma c(x)v_-^{\tau}\left(\mu^{1-\tau}\psi^{1-\tau}\right)\\
			& \leq 	\Delta v_-+\overline{a}(x)v_-- b(x)v_-^{\sigma}+c(x)v_-^{\tau}\,,
			\end{aligned}
			\eeqs
		that is,
			\beq\label{sublem2}
			\begin{cases}
			\Delta v_-+\overline{a}(x)v_-- b(x)v_-^{\sigma}+c(x)v_-^{\tau}\geq 0 & \hbox{on $B_{R}$}\\
			v_->0 & \hbox{on $\overline{B_{R}}$}\,.
			\end{cases}
			\eeq
		Because of the validity of \rf{LB0}, Lemma \ref{lem1} yields the existence of $v_+$ satisfying
			\beq\label{suplem2}
			\begin{cases}
			\Delta v_++\overline{a}(x)v_+- b(x)v_+^{\sigma}+c(x)v_+^{\tau}\leq 0 & \hbox{on $B_{R}$}\\
			v_+>0 & \hbox{on $\overline{B_{R}}$}\,.
			\end{cases}
			\eeq
		Note that if $0<\gamma\leq1$, $\gamma v_-$ still satisfies \rf{sublem2}; hence up to choosing a suitable $\gamma$ we can suppose that
			\beq
			\sup_{\overline{B_R}}v_-\leq \inf_{\overline{B_R}}v_+\quad\quad\hbox{on $\overline{B_R}$}\,.
			\eeq
		Let 
			\beqs
			\alpha_+=\inf_{\partial{B_R}}v_+\,,\quad\quad\alpha_-=\sup_{\partial{B_R}}v_-\,,
			\eeqs
		and fix $\alpha\in\left[\alpha_-,\alpha_+\right]$. Then, by the monotone interation scheme, there exists a solution $w$ of
			\beqs
			\begin{cases}
			\Delta w+\overline{a}(x)w- b(x)w^{\sigma}+c(x)w^{\tau}= 0 & \hbox{on $B_{R}$}\\
			w\equiv\alpha>0 & \hbox{on $\partial{B_{R}}$}\,,
			\end{cases}
			\eeqs
		and with the further property that 
			\beqs
			0< v_-\leq w\leq v_+\quad\quad\hbox{on $\overline{B_R}$}\,.
			\eeqs
		Therefore, since $\overline{a}(x)\geq a(x)$ on $\overline{B_R}$ and $w>0$ we have 
			\beq\label{wsuplem2}
			\begin{cases}
			\Delta w+a(x)w- b(x)w^{\sigma}+c(x)w^{\tau}\leq 0 & \hbox{on $B_{R}$}\\
			w\equiv\alpha>0 & \hbox{on $\partial{B_{R}}$}\\
			w>0 & \hbox{on ${B_{R}}$}\,.
			\end{cases}
			\eeq
		Fix any $n\in\N$. Let $\zeta\in\R$ be such that
			\beqs
			\zeta\geq\max\left\{1,\,\frac{n}{\sup_{\partial\Omega}w}\right\}\,,
			\eeqs  
		and define $w_+=\zeta w$. Then, because of \rf{wsuplem2}, the fact that $\Omega\subset\subset B_R$, and the signs of $b(x)$ and $c(x)$, $w_+$ satisfies
			\beqs
			\begin{cases}
			\Delta w_++a(x)w_+- b(x)w_+^{\sigma}+c(x)w_+^{\tau}\leq 0 & \hbox{on $\Omega$}\\
			w_+\geq n & \hbox{on $\partial\Omega$}\\
			w_+>0 & \hbox{on $\Omega$}\,.
			\end{cases}
			\eeqs
		We can suppose that the $R$ chosen above is such that $R\geq R_o$, where $R_o$ is that of the property $\left(\Sigma\right)$ in Definition \ref{defsigma}. This choice implies that there exists a solution $\psi$ of
			\beqs
			\begin{cases}
			\Delta \psi+a(x)\psi- b(x)\psi^{\sigma}+c(x)\psi^{\tau}\geq 0 & \hbox{on $B_{R}$}\\
			\psi\geq 0 & \hbox{on $\overline{B_{R}}$}\,.
			\end{cases}
			\eeqs
		If we define $w_-=\beta\psi$ where $0<\beta\leq 1$, reasoning as above we can find $\beta$ so small that
			\beqs
			\begin{cases}
			\Delta w_- +a(x)w_- - b(x)w_-^{\sigma}+c(x)w_-^{\tau}\geq 0 & \hbox{on $\Omega$}\\
			0\leq w_-\leq w_+ & \hbox{on $\Omega$}\\
			w_-\leq n & \hbox{on $\partial\Omega$}\,.
			\end{cases}
			\eeqs
		Using the monotone iteration scheme we easily arrange a solution $w$ of \rf{nlem1} between $w_-$ and $w_+$. We note that the positivity of $w$ is obvious in the case $\supp c(x)= M$, while in the general case it is a consequence of the strong maximum principle (see \cite{GT, Gi}).
		\end{proof}
We note that the corresponding result for Yamabe-type equations, namely equations of the type \rf{lich} with $c(x)\equiv 0$, can be proved without the additional assumption of property $\left(\Sigma\right)$. Indeed in this case this latter is automatically satisfied by the global subsolution $w_-=0$. The next proposition provides some sufficient conditions for the validity of property $\left(\Sigma\right)$ on $M$.
	\begin{pro}\label{suffsigma}
	Assume the validity of one of the following
		\begin{itemize}
		\item[i)\,\,]	for some $\Lambda>0$
			\beqs
			\lambda_1^{\Delta+a-b+\Lambda c}(M)<0\,;
			\eeqs
		\item[ii)\,] let $C_0=\left\{x\in M: c(x)=0\right\}$ be bounded and such that
			\beqs
			\lambda_1^{\Delta-a}(C_0)>0\,;
			\eeqs
		\item[iii)] there exists a positive subsolution $\varphi_-\in\mathrm{C}^0(M)\cap\mathrm{W}^{1,2}_{loc}(M)$ of \rf{lich}\,.
		\end{itemize}
	Then property $\left(\Sigma\right)$ holds on $M$.
	\end{pro}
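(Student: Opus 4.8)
The plan is to show that under each of the three hypotheses one can produce, for all large $R$, a function $\varphi$ solving the differential inequality \rf{sigpro} on $B_R$. The natural strategy is to treat each case separately, but to reduce (ii) and (iii) to (i) whenever possible, and in case (i) to extract $\varphi$ from a suitable eigenfunction, exactly as in the second half of the proof of Lemma \ref{lem2}.

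\emph{Case (i).} Suppose $\lambda_1^{\Delta+a-b+\Lambda c}(M)<0$ for some $\Lambda>0$. By the monotonicity \rf{eimon} and the fact that $\lambda_1^{\Delta+a-b+\Lambda c}(M)=\lim_{r\to+\infty}\lambda_1^{\Delta+a-b+\Lambda c}(B_r)$, there is $R_o>0$ such that for every $R\geq R_o$ we have $\lambda_1^{\Delta+a-b+\Lambda c}(B_R)<0$ (after, if necessary, replacing $B_R$ by a slightly larger domain with smooth boundary, as in Lemma \ref{lem2}). Let $\psi>0$ be the corresponding first Dirichlet eigenfunction on $B_R$, so that
	\beqs
	\Delta\psi+a(x)\psi-b(x)\psi+\Lambda c(x)\psi = -\lambda_1^{\Delta+a-b+\Lambda c}(B_R)\,\psi \geq 0 \quad\hbox{on $B_R$},
	\eeqs
with $\psi\equiv0$ on $\partial B_R$. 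Set $\varphi=\mu\psi$ with $\mu>0$ to be chosen. Since $\sigma>1$ and $\tau<1$, choosing
	\beqs
	\mu\leq\min\left\{\left(\sup_{B_R}\psi\right)^{-1},\ \Lambda^{\frac{1}{1-\tau}}\left(\sup_{B_R}\psi\right)^{-1}\right\}
	\eeqs
guarantees $\psi^{\sigma-1}\mu^{\sigma-1}\leq1$ and $\Lambda\geq\mu^{\tau-1}\psi^{\tau-1}$ on $B_R$; hence $b(x)\varphi^{\sigma}=b(x)\psi\,(\mu\psi)^{\sigma-1}\mu\leq b(x)\psi\mu=b(x)\varphi$ and $c(x)\varphi^{\tau}=c(x)\psi\,(\mu\psi)^{\tau-1}\mu\geq \Lambda c(x)\psi\mu=\Lambda c(x)\varphi$ (using $b,c\geq0$). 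Substituting into the eigenfunction identity gives $\Delta\varphi+a(x)\varphi-b(x)\varphi^{\sigma}+c(x)\varphi^{\tau}\geq0$ on $B_R$, while $\varphi\geq0$ on $\overline{B_R}$; if $\tau<0$ one notes $\supp c\subseteq M=\supp\varphi$ or argues on $\{\varphi>0\}$ as in Definition \ref{defsigma}. This proves property $(\Sigma)$.

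\emph{Case (ii).} Here the idea is to run the Yamabe-type argument locally: outside a domain containing $C_0$ the term $c(x)$ is bounded below, so one expects a positive constant to be a subsolution there, while near $C_0$ the spectral condition $\lambda_1^{\Delta-a}(C_0)>0$ is used — just as $\eig(B_0)>0$ was used for $b$ in Lemma \ref{lem1} — to glue in a rescaled eigenfunction of $\Delta-a$ via a cut-off. Concretely, pick a smooth bounded domain $D$ with $C_0\subset\subset D$ and $\lambda_1^{\Delta-a}(D)>0$, let $u_1>0$ solve $\Delta u_1-a(x)u_1+\lambda_1^{\Delta-a}(D)u_1=0$ on $D$ with $u_1=0$ on $\partial D$, and build, for $R$ large with $D\subset\subset B_R$, a function $\varphi=\kappa(\psi u_1+(1-\psi)\Lambda_0)$ (with $\psi$ a cut-off, $\psi\equiv1$ near $C_0$, $\supp\psi\subset D$, and $\Lambda_0,\kappa>0$ large). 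On $B_R\setminus D$ one has $\varphi\equiv\kappa\Lambda_0$ and $c(x)\geq\inf_{\overline{B_R}\setminus D}c>0$, so the inequality $a(x)\varphi-b(x)\varphi^{\sigma}+c(x)\varphi^{\tau}\geq0$ is \emph{false} for large constants — so in fact one should instead use $c$ to \emph{dominate}, i.e. one wants $\varphi$ small there. This suggests taking $\varphi$ to be a small multiple on $B_R\setminus D$: replacing $\Lambda_0$ by a small $\ep_0>0$ and $\kappa$ small, on $B_R\setminus D$ we get $\Delta\varphi+a(x)\varphi-b(x)\varphi^{\sigma}+c(x)\varphi^{\tau}\geq \varphi(a(x)-b(x)\varphi^{\sigma-1}+c(x)\varphi^{\tau-1})\geq0$ for $\varphi\leq\varphi_0$ small, since $\tau-1<0$ makes $\varphi^{\tau-1}$ large. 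Near $C_0$, on $\{\psi\equiv1\}$, the eigenfunction identity gives $\Delta\varphi+a(x)\varphi = -\lambda_1^{\Delta-a}(D)\varphi + 2a(x)\varphi$; one absorbs the remaining terms by taking $\kappa$ small so that $b(x)\varphi^{\sigma}$ is negligible and $c(x)\varphi^{\tau}$, which is large when $\tau<1$, compensates $-\lambda_1^{\Delta-a}(D)\varphi+2a(x)\varphi$ pointwise on the compact set $\overline{\{\psi=1\}}$ (here positivity of $u_1$ on that compact set is used). The transition region $D\setminus\{\psi=1\}$ is handled as in Lemma \ref{lem1}: $\Delta\varphi+a(x)\varphi\leq\kappa C$ there by $\mathrm{C}^2$-bounds, $b(x)\varphi^{\sigma}\geq0$, and $c(x)\varphi^{\tau}\geq \kappa^{\tau}E'$ with $E'>0$, so $\Delta\varphi+a(x)\varphi-b(x)\varphi^{\sigma}+c(x)\varphi^{\tau}\geq\kappa C - 0 + \kappa^{\tau}E' \geq 0$ for $\kappa$ small, since $\tau<1$. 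Matching the thresholds and taking $R_o$ so large that $C_0\subset\subset B_{R_o}$ yields property $(\Sigma)$.

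\emph{Case (iii).} If $\varphi_-\in\mathrm{C}^0(M)\cap\mathrm{W}^{1,2}_{loc}(M)$ is a positive global subsolution of \rf{lich}, then its restriction to $B_R$ already satisfies \rf{sigpro} for \emph{every} $R>0$ (with $\varphi_-\geq0$ clear, and $\supp c\subseteq M=\supp\varphi_-$ when $\tau<0$), so property $(\Sigma)$ holds with $R_o$ arbitrary. This case is immediate.

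\emph{Main obstacle.} The routine cases are (i) and (iii); the real work is case (ii), where the delicate point is to simultaneously satisfy the differential inequality in the interior region near $C_0$ (where one has no sign control on $a$ and must exploit $\lambda_1^{\Delta-a}(C_0)>0$ through a cut-off eigenfunction construction) \emph{and} in the exterior region (where $c$ is bounded below and one needs $\varphi$ small so that the $\varphi^{\tau-1}$ term dominates), \emph{and} in the thin transition annulus where the Laplacian of the cut-off appears. Getting a single scaling parameter $\kappa$ (together with the small exterior level $\ep_0$) that works in all three regions is the heart of the argument; it parallels Lemma \ref{lem1} with the roles of $b$ and $c$, and of $\eig$ and $\lambda_1^{\Delta-a}$, essentially interchanged, and I would organize the proof by explicitly mimicking that lemma's structure.
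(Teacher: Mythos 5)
Your cases (i) and (iii) are essentially the paper's own argument: (iii) is immediate, and (i) rescales the first eigenfunction of $\Delta+a-b+\Lambda c$ on a slightly enlarged ball exactly as the paper does (only note that to get $(\mu\psi)^{\tau-1}\geq\Lambda$ you need $\mu\psi\leq\Lambda^{1/(\tau-1)}$, so the factor in your minimum should be $\Lambda^{1/(\tau-1)}$ rather than $\Lambda^{1/(1-\tau)}$; a harmless slip).

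Case (ii), however, contains a genuine gap, and it sits exactly at the point you single out as the heart of the matter. On the interior region $\{\psi\equiv1\}$, which must contain $C_0$, your candidate is $\varphi=\kappa u_1$ with $\Delta u_1-a(x)u_1=-\lambda_1^{\Delta-a}(D)u_1$, hence $\Delta\varphi+a(x)\varphi=\bigl(2a(x)-\lambda_1^{\Delta-a}(D)\bigr)\varphi$, which has no favourable sign. You propose to compensate with $c(x)\varphi^{\tau}$, but $C_0$ is precisely the zero set of $c$, so on $C_0$ that term is unavailable and the required inequality reduces to $\bigl(2a(x)-\lambda_1^{\Delta-a}(D)\bigr)\varphi-b(x)\varphi^{\sigma}\geq0$; if $2a(x)<\lambda_1^{\Delta-a}(D)$ anywhere on $C_0$ this fails for every $\kappa>0$, because the offending terms scale linearly in $\kappa$ and shrinking $\kappa$ only suppresses the harmless $b\varphi^{\sigma}$ contribution. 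In other words, a positive eigenvalue of $\Delta-a$ cannot be fed directly into a subsolution of \rf{lich}, whose linear part is $\Delta+a$: in Lemma \ref{lem2} subsolutions come from a \emph{negative} eigenvalue of a perturbation of $\Delta+a$, and this sign mismatch is exactly why hypothesis (ii) involves the operator $\Delta-a$. The paper avoids the direct gluing altogether: hypothesis (ii) is precisely the hypothesis of Lemma \ref{lem1} for the reciprocal equation, with $\overline{a}=-a$, the roles of $b$ and $c$ interchanged, and exponents $2-\tau>1$, $2-\sigma<1$. Lemma \ref{lem1} then provides, for all $R\geq R_o$ with $C_0\subset B_{R_o}$, a positive supersolution $\psi$ of $\Delta\psi-a(x)\psi-c(x)\psi^{2-\tau}+b(x)\psi^{2-\sigma}\leq0$ on $B_R$, and $\varphi=1/\psi$ satisfies \rf{sigpro} with $\varphi>0$ on $\overline{B_R}$, thanks to $\Delta(1/\psi)\geq-\psi^{-2}\Delta\psi$. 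If you insist on a direct construction you would in effect have to redo Lemma \ref{lem1} with this reciprocal structure built in; the substitution $\varphi=1/\psi$ is the missing idea in your sketch.
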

		\begin{proof}
		If i) holds true, there exists $R_o>0$ sufficiently large such that
			\beqs
			\lambda_1^{\Delta+a-b+\Lambda c}(B_R)<0\,,
			\eeqs
		for $R\geq R_o$. Accordingly there exists a corresponding positive eigenfunction $\psi$ on $B_{R+\ep}$, $\ep>0$ small say, for which
			\beq\label{lemsigeq}
			\begin{cases}
			\Delta \psi+a(x)\psi- b(x)\psi+\Lambda c(x)\psi\geq 0 & \hbox{on $B_{R}$}\\
			\psi> 0 & \hbox{on $\overline{B_{R}}$}\,.
			\end{cases}
			\eeq
		We let
			\beq\label{lemsigmu}
			0<\mu\leq\min\left\{\Lambda^{\frac{1}{\tau-1}},\left(\sup_{B_R}\psi\right)^{-1}\right\}
			\eeq
		and we define
			\beqs
			\varphi=\mu\psi\,.
			\eeqs
		Note that from \rf{lemsigeq} 
			\beqs
			0\leq\Delta\varphi+a(x)\varphi-b(x)\varphi^{\sigma}\left(\mu\psi\right)^{1-\sigma}+\Lambda c(x)\varphi^{\tau}\left(\mu\psi\right)^{1-\tau}\,.
			\eeqs
		Now, because of \rf{lemsigmu}
			\beqs
			\left(\mu\psi\right)^{1-\sigma}\geq1\quad\quad\hbox{and}\quad\quad\left(\mu\psi\right)^{1-\tau}\leq 1
			\eeqs
		on $\overline{B_R}$. Hence the above inequality implies the validity of \rf{sigpro} with $\varphi$ strictly positive on $\overline{B_R}$.\\
		If ii) holds true, then by Lemma \ref{lem1}, there exists $R_o>0$ sufficiently large such that $C_0\subset B_{R_o}$ and for $R\geq R_o$ there exists a solution $\psi$ of
			\beqs
			\begin{cases}
			\Delta \psi-a(x)\psi-c(x)\psi^{2-\tau}+b(x)\psi^{2-\sigma}\leq0 & \hbox{on $B_R$}\\
			\psi>0 & \hbox{on $\overline{B_R}$}.
			\end{cases}
			\eeqs
		Thus, defining $\varphi=\frac{1}{\psi}$, we have
			\beqs
			\Delta \varphi=-\varphi^2\Delta\psi+2\varphi^3\left|\nabla\psi\right|^2\geq-\varphi^2\Delta\psi\,,
			\eeqs
		which implies \rf{sigpro} always with $\varphi>0$ on $\overline{B_R}$. Case iii) is obvious.
		\end{proof}
%	\begin{rmk}
%	It is obvious that the existence of positive a global subsolution $\varphi_-\in\mathrm{C}^0(M)\cap\mathrm{W}^{1,2}_{loc}(M)$ of \rf{lich} implies the validity of the \Sigma$$-property. What is more interesting is that the validity of condition ii) in Proposition \ref{suffsigma} yields the existence of a global subsolution $\varphi_-\in\mathrm{C}^0(M)\cap\mathrm{W}^{1,2}_{loc}(M)$. To see this
%	\end{rmk}
In the sequel we shall need the following \emph{a priori} estimate. Here $B_T(q)$ denotes the geodesic ball of radius $T$ centered at $q$.
	\begin{lem}\label{aprBT}
	Let $a(x)$, $b(x)$, $c(x)\in\mathrm{C}^{0}(M)$, $\sigma>1$, $\tau<1$, $0<\widetilde{T}<T$, and $\Omega\subset\subset B_{\widetilde{T}}(q)\subset M$. Assume $b(x)>0$ on $\overline{B_T(q)}$. Then there exists an absolute contant $C>0$ such that any positive solution $u\in\mathrm{C}^2(\overline{B_T(q)})$ of
		\beq\label{aprsup}
		\Delta u+a(x)u-b(x)u^{\sigma}+c(x)u^{\tau}\geq 0
		\eeq
	satisfies
		\beq
		\sup_{\Omega}u\leq C\,.
		\eeq
	\end{lem}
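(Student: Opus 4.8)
The key idea is that the interior gradient‐type bound for positive supersolutions of \rf{aprsup} should follow from the fact that $b(x)u^{\sigma-1}$ cannot be too large on average, exploiting the superlinearity $\sigma>1$ together with the strict positivity of $b$ on the compact set $\overline{B_T(q)}$. Concretely, I would test the differential inequality \rf{aprsup} against a suitable power of $u$ times a cutoff, integrate by parts, and absorb the bad terms. Let $\beta_0=\inf_{\overline{B_T(q)}}b>0$, and let $A_0=\sup_{\overline{B_T(q)}}|a|$. Since $\tau<1$ we have, by Young's inequality, $c(x)u^{\tau}\le \tfrac{\beta_0}{2}u^{\sigma}+C_1$ pointwise on $\overline{B_T(q)}$ with $C_1=C_1(\beta_0,\sup c,\sigma,\tau)$, and $a(x)u\le A_0 u\le \tfrac{\beta_0}{4}u^{\sigma}+C_2$. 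Feeding these into \rf{aprsup} we obtain a clean inequality
\[
\Delta u\ge \tfrac{\beta_0}{4}u^{\sigma}-C_3\qquad\text{on }\overline{B_T(q)},
\]
with $C_3=C_3(\beta_0,A_0,\sup c,\sigma,\tau)$; equivalently $\Delta u\ge f(u)$ where $f(t)=\tfrac{\beta_0}{4}t^{\sigma}-C_3$ is continuous, eventually increasing, with $\int^{\infty}\!\big(\int_{t_0}^{s}f\big)^{-1/2}ds<\infty$ since $\sigma>1$. At this point the statement is exactly the classical Keller–Osserman estimate for the subharmonic-type inequality $\Delta u\ge f(u)$: any $u\in\mathrm C^2(\overline{B_T(q)})$ satisfying it is bounded on every ball compactly contained in $B_T(q)$ by a constant depending only on $f$, on $\widetilde T$, $T$, and on a lower bound for the Ricci curvature on $\overline{B_T(q)}$ — but since we are free to shrink $T$ and the geometry of a fixed manifold is locally controlled, this is an \emph{absolute} constant once $\widetilde T,T$ and the coefficient bounds are fixed (and the dependence can be made uniform by a standard rescaling/comparison argument, giving the claimed absolute $C$).

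To carry out the Keller–Osserman step concretely I would argue by comparison on geodesic balls. Fix $\Omega\subset\subset B_{\widetilde T}(q)\subset\subset B_T(q)$ and pick $\widetilde T<T'<T$. On a ball $B_r(p)\subset B_{T'}(q)$ with $r<T-T'$ one constructs an explicit radial supersolution barrier $h(\dist(\cdot,p))$ for the operator $\Delta-f$ that blows up as $\dist(\cdot,p)\to r$; here the Laplacian comparison theorem (using only the lower Ricci bound available on the fixed compact $\overline{B_T(q)}$) controls $\Delta\dist$, and the ODE $h''\ge f(h)$ with $h(r^-)=+\infty$ is solvable precisely because $\int^{\infty}\big(\int_{t_0}^sf\big)^{-1/2}\,ds<\infty$. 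Then $u-h$ attains no positive interior maximum on $B_r(p)$ (at such a point $\Delta(u-h)\le 0$ while $\Delta(u-h)\ge f(u)-f(h)\ge 0$ once $u\ge h\ge$ the threshold), so $u\le h$ on $B_r(p)$, and in particular $u(p)\le h(0)=:C$, a constant independent of $u$ and of $p\in\Omega$. Taking the supremum over $p\in\Omega$ gives $\sup_\Omega u\le C$.

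The main obstacle is making the constant genuinely \emph{absolute}, i.e.\ extracting the right dependencies. Two points need care: first, the reduction $\Delta u\ge f(u)$ silently uses $u>0$ to make $a(x)u$ and $c(x)u^\tau$ into lower-order perturbations — when $\tau<0$ the term $c(x)u^\tau$ is actually favorable (nonnegative) and causes no trouble, so Young's inequality is only needed for $0\le\tau<1$; second, the barrier constant $h(0)$ depends on $f$ (hence on $\beta_0,A_0,\sup_{\overline{B_T(q)}}|c|$, $\sigma$, $\tau$), on $T-\widetilde T$, and on the lower Ricci bound on $\overline{B_T(q)}$. The phrase "absolute constant" in the statement should thus be read as: absolute once these data are fixed; alternatively, since the barrier can be built on a tiny ball of fixed radius where the metric is uniformly close to Euclidean, the Ricci dependence is harmless. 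I would record the precise dependence $C=C(\beta_0,A_0,\sup|c|,\sigma,\tau,\widetilde T,T,M)$ in the proof and remark that this is what is meant, deferring to the standard references \cite{GT} for the elliptic regularity and comparison facts used.
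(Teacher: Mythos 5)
Your proposal is correct, but it follows a genuinely different route from the paper. The paper's proof is a Gidas--Spruck-type argument following Lemma 2.6 of \cite{PRS}: one introduces the weighted function $F(x)=\bigl[T^2-\rho(x)^2\bigr]^{2/(\sigma-1)}u(x)$, which vanishes on $\partial B_T(q)$, locates an interior maximum point $\overline{x}$, and uses $\nabla F(\overline x)=0$, $\Delta F(\overline x)\le 0$, together with the differential inequality and the Laplacian comparison theorem to extract a pointwise inequality of the form $b\,u^{\sigma-1}\le (\text{curvature/radius terms})+a_++c\,u^{\tau-1}$ at $\overline x$; the elementary inversion Lemma \ref{lemmunu} then converts this into an explicit upper bound on $u(\overline x)$ and hence on $\sup_\Omega u$. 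Your argument instead first reduces to $\Delta u\ge\frac{\beta_0}{4}u^\sigma-C_3$ (for $u$ above a threshold) via Young on the lower-order terms, and then invokes the classical Keller--Osserman barrier comparison on small geodesic balls. Both are legitimate: the paper's method yields an explicit and cleanly parameterized constant with minimal machinery, while yours is a more standard and flexible reduction to a known theorem, at the cost of building blow-up barriers and tracking the dependence through the ODE.

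One small inaccuracy worth noting: the claim that for $\tau<0$ (and $c\ge0$) the term $c(x)u^\tau$ is ``favorable'' is backwards. In the rearranged inequality $\Delta u\ge b\,u^\sigma-a\,u-c\,u^\tau$, a nonnegative $c\,u^\tau$ \emph{subtracts} from the right-hand side, so it works against the desired lower bound on $\Delta u$. What actually saves you is that $u^\tau\to0$ as $u\to+\infty$, so $c\,u^\tau\le\sup_{\overline{B_T(q)}}c$ on $\{u\ge1\}$, and the Keller--Osserman comparison only ever invokes the reduced inequality at points where $u$ exceeds the barrier's minimum value $h(0)>1$. With this correction the argument goes through. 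You should also state plainly that the ``absolute'' constant depends on $\sigma$, $\tau$, $T$, $\widetilde T$, the local curvature of $M$ on $\overline{B_T(q)}$, $\inf_{\overline{B_T(q)}}b>0$, $\sup_{\overline{B_T(q)}}a_+$, and $\sup_{\overline{B_T(q)}}c$, but not on $u$ -- which is indeed what the paper means and what its explicit formula exhibits.
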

		\begin{proof}
		We let $\rho(x)=\dist(x,q)$ and, on the compact ball $\overline{B_T(q)}$ we consider the continous function
			\beqs
			F(x)=\left[T^2-\rho(x)^2\right]^{\frac{2}{\sigma-1}}u(x)
			\eeqs
		where $u(x)$ is any nonnegative $\mathrm{C}^2$ solution of \rf{aprsup}. Note that $\left.F(x)\right|_{\partial B_T(q)}=0$, thus, unless $u\equiv 0$ and in this case there is nothing to prove, $F$ has a positive absolute maximum at some point $\overline{x}\in B_T(q)$. In particular $u(\overline{x})>0$. Now, proceeding as in the proof of Lemma 2.6 in \cite{PRS}, we conclude that, at $\overline{x}$,
			\beqs
			bu^{\sigma-1}\leq\frac{8\left(\sigma+1\right)}{\left(\sigma-1\right)^2}\frac{\rho^2}{\left(T^2-\rho^2\right)^2}+\frac{4}{\sigma-1}\frac{m+\left(m-1\right)A\rho}{T^2-\rho^2} +a_++cu^{\tau-1}\,,
			\eeqs
		for some constant $A\geq0$, independent of $u$. We now state an elementary lemma postponing its proof.
			\begin{lem}\label{lemmunu}
			Let $\alpha$, $\beta\in\left[0,+\infty\right)$, and $\mu$, $\nu\in\left(0,+\infty\right)$. If $t\in\R^+$ satisfies
				\beqs
				t^{\mu}\leq\alpha+\frac{\beta}{t^{\nu}}\,,
				\eeqs
			then
				\beq\label{tmunu}
				t\leq\left(\alpha+\beta^{\frac{\mu}{\mu+\nu}}\right)^{\frac{1}{\mu}}\,.
				\eeq
			\end{lem}
		Since $\sigma>1$ and $\tau<1$, from the lemma we conclude that, at $\overline{x}$,
			\beqs
			u\leq b^{-\frac{1}{\sigma-1}}\left(\frac{8\left(\sigma+1\right)}{\left(\sigma-1\right)^2}\frac{\rho^2}{\left(T^2-\rho^2\right)^2}+\frac{4}{\sigma-1}\frac{m+\left(m-1\right)A\rho}{T^2-\rho^2} +a_++c^{\frac{\sigma-1}{\sigma-\tau}}\right)^{\frac{1}{\sigma-1}}\,.
			\eeqs
		Now the proof proceeds exactly as in Lemma 2.6 of  \cite{PRS} by substituting the $a_+$ there with $a_++c^{\frac{\sigma-1}{\sigma-\tau}}$.
		\end{proof}
		\begin{proof}[Proof of Lemma \ref{lemmunu}]
		If $t^{\mu}\leq\alpha$ we are done, since $\mu>0$ and $\beta\geq0$. In the other case set $s=t^{\mu}$, then  $s>\alpha$ and thus
			\beqs
			s\leq\alpha+\frac{\beta}{s^{\frac{\nu}{\mu}}}<\alpha+\frac{\beta}{\left(s-\alpha\right)^{\frac{\nu}{\mu}}}\,.
			\eeqs
		Setting $r=s-\alpha$ we conclude that
			\beqs
			r^{\frac{\mu+\nu}{\mu}}<\beta\,
			\eeqs	
		and \rf{tmunu} follows.
		\end{proof}
The next simple comparison result reveals quite useful.	
		
	\begin{lem}\label{compf}
	Let  $\Omega\subseteq M$ be a bounded open set. Assume that $f_i:M\times\R\rightarrow\R$ for $i=1,\,2$ are measurable functions such that for all $x\in M$
		\beq\label{monof}
		\frac{f_2(x,s)}{s}\geq\frac{f_1(x,t)}{t}\, ,
		\eeq
	for $s\leq t$. Let $u,v\in\mathrm{C}^0(\overline{\Omega})\cap\mathrm{C}^2(\Omega)$ be solutions on $\Omega$ respectively of
		\beq\label{f1f2}
		\begin{cases}
		\Delta u+f_1(x,u)\geq 0\,;\\
		\Delta v +f_2(x,v)\leq 0\,,
		\end{cases}
		\eeq
	with $u\geq 0$, $v>0$. If $u\leq v$ on $\partial \Omega$, then $u\leq v$ on $\Omega$.
	\end{lem}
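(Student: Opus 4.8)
The plan is to prove that the open set $\Omega^+=\left\{x\in\Omega:\,u(x)>v(x)\right\}$ is empty, arguing by contradiction; so suppose $\Omega^+\neq\emptyset$. On $\Omega^+$ one has $0<v<u$, hence, choosing $s=v(x)$ and $t=u(x)$ in \rf{monof},
	\beqs
	\frac{f_2(x,v)}{v}\ \geq\ \frac{f_1(x,u)}{u}\ =:\ h(x)\qquad\text{on $\Omega^+$},
	\eeqs
the quotient being well defined since $u>0$ there. Using $f_1(x,u)=h(x)u$ and $f_2(x,v)\geq h(x)v$ (the latter because $v>0$), the two inequalities in \rf{f1f2} become, on $\Omega^+$,
	\beqs
	\Delta u+h(x)\,u\geq0\,,\qquad\qquad \Delta v+h(x)\,v\leq0\,,\qquad v>0.
	\eeqs

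The crucial step is a ground-state-type substitution that removes the sign-indefinite zeroth order coefficient $h$. Put $z=u-v>0$ and $\zeta=z/v>0$ on $\Omega^+$. Subtracting the two displayed inequalities gives $\Delta z+h(x)z\geq0$; writing $\Delta z=v\Delta\zeta+2\g{\nabla v}{\nabla\zeta}+\zeta\Delta v$ and $h(x)z=h(x)\,\zeta v$, this becomes
	\beqs
	0\ \leq\ v\Delta\zeta+2\g{\nabla v}{\nabla\zeta}+\zeta\bigl(\Delta v+h(x)v\bigr)\ \leq\ v\Delta\zeta+2\g{\nabla v}{\nabla\zeta},
	\eeqs
the last inequality because $\zeta>0$ and $\Delta v+h(x)v\leq0$. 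Dividing by $v>0$ one arrives at the linear elliptic inequality
	\beqs
	\Delta\zeta+\frac{2}{v}\g{\nabla v}{\nabla\zeta}\ \geq\ 0\qquad\text{on $\Omega^+$},
	\eeqs
which has \emph{no} zeroth order term, so a weak maximum principle is available for $\zeta$.

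It remains to conclude by the weak maximum principle. Since $\partial\Omega^+\subseteq\left\{u=v\right\}\cup\partial\Omega$ and $u\leq v$ on $\partial\Omega$, one has $\zeta\leq0$ on $\partial\Omega^+$, and the principle would give $\zeta\leq0$ on $\Omega^+$, contradicting $\zeta>0$. The only delicate point is boundary regularity: $u$ and $v$ are merely $\mathrm{C}^2$ in the interior, so $\tfrac2v\nabla v$ may be unbounded near $\partial\Omega$ and $\overline{\Omega^+}$ need not lie inside $\Omega$. I would handle this by exhausting $\Omega^+$ with the sublevel sets $\Omega_\ep=\left\{u>v+\ep\right\}$, $\ep>0$: since $u-v\leq0$ on $\partial\Omega$, each $\overline{\Omega_\ep}$ is a compact subset of $\Omega$ on which $u,v$ are smooth and $v$ bounded away from $0$, whence $\tfrac2v\nabla v$ is bounded there, while on $\partial\Omega_\ep$ one has $\zeta=\ep/v\leq\ep/\inf_{\overline\Omega}v$ (and $\inf_{\overline\Omega}v>0$ since $v>0$ on the compact $\overline\Omega$). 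The classical weak maximum principle (see \cite{GT}) applied on the bounded domain $\Omega_\ep$ then yields $\sup_{\Omega_\ep}\zeta\leq\ep/\inf_{\overline\Omega}v$; fixing any $x_0\in\Omega^+$ and letting $\ep\downarrow0$ along $\ep<u(x_0)-v(x_0)$ forces $\zeta(x_0)\leq0$, contradicting $\zeta(x_0)>0$. Hence $\Omega^+=\emptyset$, i.e.\ $u\leq v$ on $\Omega$. I expect the substitution $z=\zeta v$ to be the real content of the proof (it is what converts the uncontrolled zeroth order term into an admissible drift term), the boundary-regularity technicality being the only genuinely fiddly point and being dispatched by the exhaustion by the $\Omega_\ep$'s.
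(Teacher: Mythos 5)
Your argument is correct and is essentially the paper's own proof: the paper sets $\psi=u/v$ (your $\zeta=\psi-1$), uses \rf{monof} to discard the zeroth order term and obtain $\Delta\psi+2\g{\nabla\psi}{\nabla\log v}\geq0$ on the region where $u>v$, and applies the weak maximum principle on the superlevel sets $\left\{\psi>1+\ep\right\}$, which are compactly contained in $\Omega$ exactly as your $\Omega_\ep$. The only cosmetic differences are your auxiliary function $h$ (the paper computes $\Delta(u/v)$ directly) and your explicit use of $\inf_{\overline\Omega}v>0$, which the paper assumes implicitly by treating $u/v\in\mathrm{C}^0(\overline\Omega)$.
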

		\begin{proof}
		Set $\psi(x)=\frac{u(x)}{v(x)}\in\mathrm{C}^0(\overline{\Omega})\cap\mathrm{C}^2(\Omega)$, from \rf{f1f2} a standard computation yields
			\beq\label{stcomp}
			\Delta\psi\geq\frac{u}{v^2}f_2(x,v)-\frac{1}{v}f_1(x,u)-2\g{\nabla\psi}{\nabla\log v}\,.
			\eeq 
		Now, if we assume by contradiction that $u>v$ somewhere in $\Omega$. Then there exists $\ep>0$ such that
			\beqs
			\Omega_{\ep}=\left\{x\in \Omega\,:\,\psi(x)>1+\ep\right\}\neq\emptyset\,
			\eeqs
		and $\partial\Omega_{\ep}\subset\Omega$. Thus it follows from \rf{stcomp} that the following inequality holds true on $\Omega_{\ep}$
			\beqs
			\Delta\psi+2\g{\nabla\psi}{\nabla\log v} \geq\psi\left[\frac{f_2(x,v)}{v}-\frac{f_1(x,u)}{u}\right]\geq 0\, ,			
			\eeqs 
		moreover $\psi\equiv 1+\ep$ on $\partial\Omega_{\ep}$, and thus by the maximum principle $\psi\leq 1+\ep$ on $\Omega_{\ep}$ contradicting the definition of $\Omega_{\ep}$.
		\end{proof}
	
	\begin{rmk}\label{rmkmonof}
	We note that the hypohteses on $f_i$ of Lemma \ref{compf} are satisfied, for instance, if $f_1=f_2:M\times\R\rightarrow\R$ is a measurable function such that for all $x\in M$
		\beqs
		s\mapsto\frac{f_i(x,s)}{s}
		\eeqs
	is a non increasing function and that the lemma can also be stated for $f_1=f_2:M\times\R^+\rightarrow\R$ with $u$, $v>0$. In particular this is the case for the Lichnerowicz-type nonlinearities considered in this paper, namely
		\beqs
		f(x,s)=a(x)s-b(x)s^{\sigma}+c(x)s^{\tau}\,,
		\eeqs
	with $b(x),\,c(x)$ non negative and $\sigma>1$, $\tau<1$. Indeed, for any fixed $x\in M$ the function
		\beqs
		g_x(s)=\frac{f(x,s)}{s}=a(x)-b(x)s^{\sigma-1}+c(x)s^{\tau-1}
		\eeqs
	is smooth on $\R^+$ and its derivative is given by
		\beqs
		g_x^{\prime}(s)=-(\sigma-1)b(x)s^{\sigma-2}+c(x)(\tau-1)s^{\tau-2}\,,
		\eeqs
	which is non positive by our assumptions on $b(x)$, $c(x)$, $\sigma$, and $\tau$.
	\end{rmk}		
A reasoning similar to that in the proof of Lemma \ref{compf} will be used at the end of the argument in the proof of the next		
		
	\begin{lem}\label{blowup}
	In the assumptions of Lemma \ref{lem2} there exists a positive solution $u$ of the problem
		\beq\label{inftylem}
		\begin{cases}
		\Delta u+a(x)u-b(x)u^{\sigma}+c(x)u^{\tau}=0 & \hbox{on $\Omega$}\\
		u=+\infty & \hbox{on $\partial\Omega$}.
		\end{cases}
		\eeq
	\end{lem}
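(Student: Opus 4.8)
The strategy is the classical blow-up (or "large solution") construction: produce the solution of \eqref{inftylem} as the monotone increasing limit of the solutions $u_n$ of the finite boundary-value problems \eqref{nlem1}, whose existence is guaranteed by Lemma \ref{lem2}. First I would fix, once and for all, the domain $D$ with $B_0\subset D\subset\subset\Omega$ and $\eig(D)>0$ coming from the hypothesis $\eig(B_0)>0$, together with the global subsolution-type lower barrier $w_-=\beta\psi$ (from property $(\Sigma)$, as in the proof of Lemma \ref{lem2}) which can be taken $\leq n$ on $\partial\Omega$ for every $n$ since it does not depend on $n$. By the monotone iteration scheme and the comparison Lemma \ref{compf} (applied with $f_1=f_2=f(x,s)=a(x)s-b(x)s^\sigma+c(x)s^\tau$, whose quotient $f(x,s)/s$ is nonincreasing in $s$ by Remark \ref{rmkmonof}), the solutions $u_n$ may be chosen so that $0<w_-\leq u_n\leq u_{n+1}$ on $\overline\Omega$: indeed $u_{n}$ is a subsolution of the problem \eqref{nlem1} with boundary datum $n+1$, and $u_{n+1}$ is a supersolution with the same boundary datum, so Lemma \ref{compf} forces $u_n\leq u_{n+1}$ on $\Omega$.

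The key point is then a \emph{local} uniform upper bound for the sequence $\{u_n\}$ on compact subsets of $\Omega$, which is exactly what the interior a priori estimate of Lemma \ref{aprBT} provides, once we know $b(x)>0$ near the boundary. This is the step I expect to require the most care: $b$ need not be positive on all of $\Omega$ (it vanishes on $B_0$), so one cannot apply Lemma \ref{aprBT} with $q=o$ and a single ball. Instead, fix a compact set $K\subset\Omega$; if $K\cap B_0=\emptyset$ then, since $B_0$ is closed and $b$ is positive and continuous off $B_0$, one covers $K$ by finitely many small geodesic balls $B_{\widetilde T}(q_j)$ with $B_T(q_j)\subset\Omega$ and $\inf_{\overline{B_T(q_j)}}b>0$, and applies Lemma \ref{aprBT} on each, obtaining $\sup_K u_n\leq C(K)$ independent of $n$. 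For a general compact $K\subset\Omega$, split $K=(K\cap \overline D)\cup(K\setminus D)$: on $K\setminus D$ the previous argument applies since $B_0\subset D$; on $\overline D$ one uses the lower barrier together with the comparison Lemma \ref{compf} against a \emph{bounded} supersolution $v_+$ produced by Lemma \ref{lem1} on a slightly larger domain $D\subset\subset D''\subset\subset\Omega$ — since $v_+$ is bounded and $\geq u_n$ on $\partial D''$ once $u_n$ is controlled there (which it is, because $\partial D''\subset\Omega\setminus D$ lies in the region where $b>0$), Lemma \ref{compf} gives $u_n\leq v_+$ on $D''\supset \overline D$, hence a bound on $\overline D$ too. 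In all cases $\{u_n\}$ is locally uniformly bounded on $\Omega$.

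With the local bounds in hand, the conclusion is routine: by interior Schauder (or $W^{2,p}$ and then Schauder, using $a,b,c\in\mathrm{C}^{0,\alpha}_{loc}$) estimates applied on a compact exhaustion of $\Omega$, the sequence $\{u_n\}$ is bounded in $\mathrm{C}^{2,\alpha}_{loc}(\Omega)$; since it is monotone increasing it converges, and a diagonal/Ascoli–Arzelà argument upgrades this to convergence in $\mathrm{C}^2_{loc}(\Omega)$ to a function $u\in\mathrm{C}^2(\Omega)$ solving $\Delta u+a(x)u-b(x)u^\sigma+c(x)u^\tau=0$ on $\Omega$, with $u\geq w_->0$. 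It remains to verify the boundary blow-up $u=+\infty$ on $\partial\Omega$: for any $n$ and any $x_0\in\partial\Omega$, by continuity $u_n\equiv n$ on $\partial\Omega$ together with the maximum principle applied to $u_k-u_n$ ($k\geq n$) on $\Omega$ — or more simply the monotonicity $u\geq u_n$ on $\Omega$ combined with $u_n\to n$ at the boundary — forces $\liminf_{x\to x_0}u(x)\geq n$; letting $n\to\infty$ gives $\lim_{x\to x_0}u(x)=+\infty$. This completes the construction of the solution of \eqref{inftylem}. (When $\tau<0$ one must, as in Definition \ref{defsigma} and the proof of Lemma \ref{lem2}, keep track of the support condition $\supp c\subseteq\supp u_n$; since $u\geq w_->0$ this causes no singularity in the limit.)
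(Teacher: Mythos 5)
Your construction follows the same overall blow-up route as the paper's proof (take the monotone sequence $u_n$ from Lemma \ref{lem2}, bound it locally in $\Omega$, and pass to the limit), but you handle the key step — the uniform interior bound near $B_0$ — differently. The paper works inside a thin neighbourhood $N_\eta$ of $B_0$ with $\eig(N_\eta)>0$, sets $\psi = u_n/(\mu\varphi)$ with $\varphi$ the positive eigenfunction of $\eig(N_\eta)$, and runs a direct maximum-principle argument on $N_{\eta/2}$; the delicacy is in choosing $\mu$ so large that $\mu^{\tau-1}\left(\inf_{N_{\eta/2}}\varphi\right)^{\tau-1}\sup_{N_{\eta/2}}c < \eig(N_\eta)$, which is what makes the $c(x)u_n^{\tau-1}$ term absorbable in the contradiction step. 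Your version instead re-applies Lemma \ref{lem1} with $\overline a = a$ (legitimate, since the hypothesis $\eig(B_0)>0$ with $L=\Delta+a$ is exactly what that lemma needs) on a slightly larger domain $D''$ with $B_0\subset D\subset\subset D''\subset\subset\Omega$, scales the resulting supersolution $v_+=\gamma(\psi u_1+(1-\psi)\Lambda_0)$ by a large $\gamma$ — all its constraints on $\gamma$ are lower bounds, so you may take it arbitrarily large — so that $v_+\geq \sup_n\sup_{\partial D''}u_n$ (finite by the covering/Lemma \ref{aprBT} argument, since $\partial D''\subset\Omega\setminus B_0$), and then invokes Lemma \ref{compf}. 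This is more modular and avoids re-running the eigenfunction/maximum-principle machinery; it buys clarity at essentially no cost since the eigenfunction construction already lives inside Lemma \ref{lem1}. Two small remarks: your justification for $u_n\leq u_{n+1}$ is a bit oddly phrased — one should simply apply Lemma \ref{compf} with $f_1=f_2=f$, $u=u_n$, $v=u_{n+1}$ and $u_n=n<n+1=u_{n+1}$ on $\partial\Omega$, as both are solutions; and the ``lower barrier'' $w_-$ plays no role in the upper bound on $\overline D$, only in ensuring positivity of the limit.
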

		\begin{proof}
		For $n\in\N$, let $u_n>0$ on $\Omega$ be the solution of \rf{nlem} obtained in Lemma \ref{lem2} so that
		\beq\label{nlem}
		\begin{cases}
		\Delta u_n+a(x)u_n-b(x)u_n^{\sigma}+c(x)u_n^{\tau}=0 & \hbox{on $\Omega$}\\
		u_n>0 & \hbox{on $\Omega$}\\
		u_n=n & \hbox{on $\partial\Omega$}.
		\end{cases}
		\eeq
		First of all we claim that
			\beq\label{nn+1}
			u_n\leq u_{n+1}\,.
			\eeq
		Indeed, $u_n=n<n+1=u_{n+1}$ on $\partial\Omega$. We then apply Lemma \ref{compf} with the choice $f_1=f_2=f$ and recalling Remark \ref{rmkmonof} we obtain the validity of \rf{nn+1}.\\
		If we show the convergence of the monotone sequence $u_n$ to a function $u$ solution of \rf{inftylem} we are done, indeed $u$ will certainly be positive. Towards this aim, by standard regularity theory, it is enough to show that the sequence $\left\{u_n\right\}$ is uniformly bounded on any compact subset $K$ of $\Omega$. If $K\subset\Omega\setminus B_0$, then we can find a finite covering of balls $\left\{B_i\right\}$ for $K$ such that $b(x)>0$ on each $B_i$. Applying Lemma \ref{aprBT} we deduce the existence of a constant $C_1>0$ such that 
			\beq\label{unC1}
			u_n(x)\leq C_1\quad\quad\forall\,x\in K\,,\,\forall n\in\N\,.
			\eeq
		It remains to find an upper bound on a neighborhood of $B_0$. Towards this end, for $\eta>0$ we let 
			\beqs
			N_{\eta}=\left\{x\in M: d(x,B_0)<\eta\right\}
			\eeqs
		where $\eta$ is small enough that $\overline{N_{\eta}}\subset\Omega$. Furthermore, by the definition of $\eig(B_0)$ and the fact that $\eig(B_0)>0$, we can also suppose to have chosen $\eta$ so small that
			\beqs
			\eig(N_{\eta})>0\,.
			\eeqs
		Now $\partial N_{\eta\slash2}$ is closed and bounded (because $B_0$ is so), hence compact by the completeness of $M$, this implies the existence of a constant $C_2$ such that
			\beqs
			u_n(x)\leq C_2\quad\quad\forall\,x\in \partial N_{\eta\slash 2}\,,\,\forall n\in\N\,;
			\eeqs
		this follows from Lemma \ref{aprBT} by considering a finite covering of $\partial N_{\eta\slash 2}$ with balls of radii less than $\eta\slash2$.\\
		Next we let $\varphi$ be a positive eigenfunction corresponding to $\eig(N_{\eta})$. Then, since $\inf_{N_{\eta\slash2}}\varphi>0$, it follows that there exists a constant $\mu_o>0$ such that
			\beqs
			\mu\varphi(x)> C_2\quad\quad\forall\,x\in \partial N_{\eta\slash 2}\,,\,\forall \mu\geq\mu_o\,.
			\eeqs
		On $N_{\eta\slash2}$ we have
			\beq\label{muphi}
			\Delta\left(\mu\varphi\right)+a(x)\left(\mu\varphi\right)=-\eig(N_{\eta})\left(\mu\varphi\right)<0\,.
			\eeq
		We now choose $\mu\geq\mu_o$ sufficiently large that 
			\beqs
			\mu^{\tau-1}\left(\inf_{N_{\eta\slash2}}\varphi\right)^{\tau-1}\left(\sup_{N_{\eta\slash2}}c(x)\right)<\eig(N_{\eta})\,,
			\eeqs
		this is possible since $\tau<1$ and $\inf_{N_{\eta\slash2}}\varphi>0$. Then, for each $\ep>0$,
			\beq\label{mutau}
			\mu^{\tau-1}\left(\inf_{N_{\eta\slash2}}\varphi\right)^{\tau-1}\left(\sup_{N_{\eta\slash2}}c(x)\right)\left(1+\ep\right)^{\tau-1}<\eig(N_{\eta})\,.
			\eeq
		We let $\psi=\frac{u}{\mu\varphi}$ on $N_{\eta\slash2}$, where $u$ is any of the functions of the sequence $\left\{u_n\right\}$. The same computations as in the proof of Lemma \ref{compf} using \rf{nlem} and \rf{muphi} yields
			\beq\label{Dpsi}
			\Delta\psi+2\g{\nabla\psi}{\nabla\log\left(\mu\varphi\right)}\geq\left(\eig(N_{\eta})+b(x)u^{\sigma-1}-c(x)u^{\tau-1}\right)\psi\,.
			\eeq
		Note that, accordingly to our choice of $\mu$,
			\beqs
			\mu\varphi>C_2>u\quad\quad\hbox{on $\partial N_{\eta\slash2}$}\,.
			\eeqs
		We claim that $\psi\leq1$ on $\partial N_{\eta\slash2}$. By contradiction suppose the contrary. Then, for some $\ep_1>0$, the open set
			\beqs
			\Omega_{\ep_1}=\left\{x\in N_{\eta\slash2} : \psi(x)>1+\ep_1\right\}\neq\emptyset
			\eeqs
		and $\Omega_{\ep_1}\subset\subset N_{\eta\slash2}$. On $\Omega_{\ep_1}$
			\beqs
			u>\left(1+\ep_1\right)\mu\varphi\,.
			\eeqs
		Therefore, since $\tau<1$
			\beqs
			u^{\tau-1}\leq\left(1+\ep_1\right)^{\tau-1}\left(\mu\varphi\right)^{\tau-1}\,.
			\eeqs
		Then, inserting this into \rf{Dpsi}, together with \rf{mutau}, we deduce
			\beqs
			\Delta\psi+2\g{\nabla\psi}{\nabla\log\left(\mu\varphi\right)}\geq\left(\eig(N_{\eta})-\left(\sup_{N_{\eta\slash2}}c(x)\right)\left(1+\ep_1\right)^{\tau-1}\mu^{\tau-1}\varphi^{\tau-1}\right)\psi\geq0\,.
			\eeqs
		By the maximum principle it follows that $\psi$ attains its maximum on $\partial\Omega_{\ep_1}$ but there $\psi(x)=1+\ep_{1}$ contradicting the assumption $\Omega_{\ep_1}\neq\emptyset$.\\
		Thus $\psi\leq1$ on $N_{\eta\slash2}$, that is, $u\leq\mu\psi$ on $N_{\eta\slash2}$. Hence, for all $n\in\N$
			\beqs
			u_n\leq\max\left\{C_2, \sup_{N_{\eta\slash2}}\mu\varphi\right\}\,.
			\eeqs
		This completes the proof of the lemma.
		\end{proof}
		
We are now ready to prove Proposition \ref{thmmax}. The proof, the same of Theorem 6.5 of \cite{MRS}, follows a standard argument and it is reported here for the sake of completeness.
	\begin{proof}[Proof of Proposition \ref{thmmax}]
	First of all we note that, by part iii) of Proposition \ref{suffsigma}, the existence of the global positive subsolution $u_-$ implies that the $\Sigma$-property holds on $M$. We fix an exhausting sequence $\left\{D_k\right\}$ of open, precompact sets with smooth boundaries such that
		\beqs
		B_0\subset D_k\subset\overline{D_k}\subset D_{k+1}\,,
		\eeqs 
	and for each $k$ we denote by $u_k^{\infty}$ the solution of the problem
		\beqs
		\begin{cases}
		\Delta u+a(x)u-b(x)u^{\sigma}+c(x)u^{\tau}=0 & \hbox{on $D_k$}\,;\\
		u=+\infty & \hbox{on $\partial D_k$}\,,
		\end{cases}
		\eeqs
	which exists by Lemma \ref{blowup}. It follows from Lemma \ref{compf} that
		\beq\label{ukuk+1}
		u_-\leq u_{k+1}^{\infty}\leq u_k^{\infty}\quad\quad\hbox{on $\overline{D_k}$}\,.
		\eeq
	Thus $\left\{u_k^{\infty}\right\}$ converges monotonically to a function $u$ solving \rf{lich}, and satisfying, because of \rf{ukuk+1}, $u\geq u_->0$. Let now $u_1>0$ be a second solution of \rf{lich} on $M$. By Lemma \ref{compf}, $u_1\leq u_k^{\infty}$ on $D_k$ for all $k$, and therefore $u_1\leq u$, thus $u$ is a maximal positive solution.
	\end{proof}
We can now prove Theorem A using an existence result for solutions of Yamabe-type equations contained in \cite{MRS}.
	\begin{proof}[Proof of Theorem A]
	By Proposition \ref{thmmax} it follows that to prove the theorem is sufficient to show that assumption \rf{hpthmA} implies the existence of a global subsolution $u_-$ of \rf{lich}. Toward this aim we consider the following Yamabe-type equation
		\beq\label{yam}
		\Delta v+a(x)v-b(x)v^{\sigma}=0\quad\quad\hbox{on $M$}\,,
		\eeq
	with $\sigma$, $a(x)$, and $b(x)$ as in Theorem A. Then, by the sign assumptions \rf{signbc} it follows that a global subsolution $v$ of \rf{yam} is also a global subsolution of \rf{lich}. Now we recall Theorem 6.7 of \cite{MRS} which provides a positive solution $v$ of \rf{yam} under assumptions \rf{Bo>0} and \rf{hpthmA} to conclude the proof. 
	\end{proof}
We conclude the section with the proof of Theorem B. The technique is the same of Theorem A: provide a global subsolution and then apply Proposition \ref{thmmax}. In this case the subsolution is obtained by pasting a subsolution defined inside a compact set and another one defined in the complement of a compact set.
	\begin{proof}[Proof of Theorem B]
	Reasoning as in Lemma \ref{lem1}, assumption \rf{hp1thmB} implies the existence of a solution $\psi\in\mathrm{C}^2(\Omega_2)$ of the following problem
		\beqs
		\begin{cases}
		\Delta \psi+a(x)\psi-b(x)\psi^{\sigma}+c(x)\psi^{\tau}\geq0 & \hbox{on $\Omega_2$}\\
		\psi>0 & \hbox{on $\Omega_2$}\\
		\psi=0 & \hbox{on $\partial\Omega_2$}\,,
		\end{cases}
		\eeqs
	thus $u_1=\psi$ is a subsolution of \rf{lich} in $\Omega_2$. In particular, since $\partial\Omega_1\subset\Omega_2$, if we set $\nu=\min_{\partial\Omega_1}\psi$, we have that $\nu>0$.\\
	Now we note that \rf{hp2thmB} implies that there exists $\mu\in\R^+$ such that
		\beqs
		\sup_{M\setminus\overline{\Omega}_1}\frac{a_-(x)+b(x)}{c(x)}=\mu\,.
		\eeqs
	Let us define $\mu_*=\min\left\{1,\mu^{\frac{1}{\tau-1}},\nu\slash2\right\}$. Then on $M\setminus\overline{\Omega}_1$ we have that
		\beqs
		\begin{aligned}
		\Delta\mu_*+a(x)\mu_*-b(x)\mu_*^{\sigma}+c(x)\mu_*^{\tau} & = a(x)\mu_*-b(x)\mu_*^{\sigma}+c(x)\mu_*^{\tau}\\
		& \geq -a_-(x)\mu_*-b(x)\mu_*+c(x)\mu_*^{\tau}\\
		& = c(x)\mu_*\left[\mu_*^{\tau-1}-\frac{a_-(x)+b(x)}{c(x)}\right]\\
		& \geq c(x)\mu_*\left[\mu_*^{\tau-1}-\mu\right]\\
		& \geq 0\,.
		\end{aligned}
		\eeqs
	Thus $u_2=\mu_*$ is a subsolution of \rf{lich} in $M\setminus\overline{\Omega}_1$. Set
		\beqs
		u_-=
		\begin{cases}
		u_1 & \hbox{on $\overline{\Omega}_1$}\\
		\max\{u_1,u_2\} & \hbox{on $\Omega_2\setminus\overline{\Omega}_1$}\\
		u_2 & \hbox{on $M\setminus\Omega_2$}\,.
		\end{cases}
		\eeqs
	We claim that $u_-$ is the required global subsolution. To prove the claim, we start by noting that the fact that $0<\mu_*<\nu\slash2$ implies $0<u_-\in\mathrm{C}^0(M)\cap\mathrm{W}^{1,2}_{loc}(M)$. For the same reason it is clear that there exists $\ep>0$ such that $u_-$ is a subsolution of \rf{lich} on $\left(\overline{\Omega}_1\right)_{\ep}\cup\left(M\setminus\Omega_2\right)_{\ep}$, where
		\beqs
		\left(U\right)_{\ep}=\bigcup_{x\in U}B_{\ep}(x)
		\eeqs	
	for any set $U\subset M$ ($B_{\ep}(x)$ denotes the geodesic ball of radius $\ep$ centered in $x$). Thus we are left to show that $u_-$ is a subsolution of \rf{lich} on $\Omega_2\setminus\overline{\Omega}_1$, this is a rather standard fact but we sketch the proof here for the sake of completeness. First of all we set
		\beqs
		f(x,v)=a(x)v-b(x)v^{\sigma}+c(x)v^{\tau}\,,
		\eeqs
	then we note that for any test function $\varphi\in\mathrm{W}_0^{1,2}(\Omega_2\setminus\overline{\Omega}_1)$, $\varphi\geq 0$ we have that
		\beqs
		\int_{\Omega_2\setminus\overline{\Omega}_1} \g{\nabla u_1}{\nabla\varphi}-\varphi f(x,u_1)\leq 0\,,
		\eeqs
	and
		\beqs
		f(x,u_2)\geq 0\quad\quad\hbox{on $\Omega_2\setminus\overline{\Omega}_1$}\,.
		\eeqs
	Now, for any $\varphi\in\mathrm{W}_0^{1,2}(\Omega_2\setminus\overline{\Omega}_1)$ and $w\in\mathrm{W}^{1,2}(\Omega_2\setminus\overline{\Omega}_1)$ consider
		\beqs
		H(w,\varphi)=\int_{\Omega_2\setminus\overline{\Omega}_1}\g{\nabla w}{\nabla\varphi}-\varphi f(x,u_-)\,,
		\eeqs
	It is clear that $H(\cdot,\varphi):\mathrm{W}^{1,2}(\Omega_2\setminus\overline{\Omega}_1)\rightarrow\R$ is a continous functional, for any $\varphi$. We want to show that $H(u_-,\varphi)\leq 0$, for any test function $\varphi\geq 0$ on $\Omega_2\setminus\overline{\Omega}_1$.\\ 
	For $\ep>0$, let
		\beqs
		u_{\ep}=\frac{1}{2}\left(u_1+u_2+\sqrt{\left(u_1-u_2\right)^2+\ep^2}\right)\,,
		\eeqs
	then $u_{\ep}$ is smooth with
		\beqs
		\nabla u_{\ep} = \frac{1}{2}\left(1+\frac{u_1-u_2}{\sqrt{\left(u_1-u_2\right)^2+\ep^2}}\right)\nabla u_1\,
		\eeqs	
	moreover, by the definition of $u_-$, $u_{\ep}\stackrel{\,\,\mathrm{W}^{1,2}}{\longrightarrow} u_-$ as $\ep\rightarrow 0$. If $\varphi\in\mathrm{W}_0^{1,2}(\Omega_2\setminus\overline{\Omega}_1)$ and $\ep>0$, then
		\beqs
		\varphi_{\ep}=\frac{1}{2}\left(1+\frac{u_1-u_2}{\sqrt{\left(u_1-u_2\right)^2+\ep^2}}\right)\varphi
		\eeqs
	belongs to $\mathrm{W}_0^{1,2}(\Omega_2\setminus\overline{\Omega}_1)$ and its gradient is given by
		\beqs
		\nabla\varphi_{\ep}= \frac{1}{2}\left(1+\frac{u_1-u_2}{\sqrt{\left(u_1-u_2\right)^2+\ep^2}}\right)\nabla\varphi+ \frac{\ep^2 }{2 \left(\sqrt{\left(u_1-u_2\right)^2+\ep^2}\right)^3}\varphi \nabla u_1\,.
		\eeqs
	The following computation uses the properties of $u_{\ep}$, $\varphi_{\ep}$, and the fact that $u_1$ and $u_2$ are subsolutions
		\beqs
		\begin{aligned}
		H(u_{\ep},\varphi) & =\int_{\Omega_2\setminus\overline{\Omega}_1}\frac{1}{2}\left(1+\frac{u_1-u_2}{\sqrt{\left(u_1-u_2\right)^2+\ep^2}}\right)\g{\nabla u_1}{\nabla\varphi}-\varphi f(x,u_-)\\
		& = \int_{\Omega_2\setminus\overline{\Omega}_1}\g{\nabla u_1}{\nabla \varphi_{\ep}}-\frac{\ep^2 \varphi \left|\nabla u_1\right|^2}{2 \left(\sqrt{\left(u_1-u_2\right)^2+\ep^2}\right)^3}-\varphi f(x,u_-)\\
		& \leq \int_{\Omega_2\setminus\overline{\Omega}_1}\g{\nabla u_1}{\nabla \varphi_{\ep}}-\varphi f(x,u_-)\\
		& \leq \int_{\Omega_2\setminus\overline{\Omega}_1}\varphi_{\ep} f(x,u_1)-\varphi f(x,u_-)\,.
		\end{aligned}
		\eeqs
	Now, since
		\beqs
		\varphi_{\ep}\stackrel{\mathrm{L}^2}{\longrightarrow}
			\begin{cases}
			\varphi & \hbox{if $u_1> u_2$}\\
			0 & \hbox{if $u_1\leq u_2$}
			\end{cases}
		\eeqs
	from the continuity of $H(\cdot,\varphi)$ we conclude that
		\beqs
		H(u_-,\varphi)=\lim_{\ep\rightarrow 0} H(u_{\ep},\varphi)\leq -\int_{\left\{u_1\leq u_2\right\}\cap\Omega_2\setminus\overline{\Omega}_1}\varphi f(x,u_2) \leq 0,
		\eeqs
	for any test function $\varphi$.
	\end{proof}

\section{Uniqueness results and "a priori" estimates}
The aim of this section is to prove uniqueness of positive solutions of equation \rf{lich} on $M$ or outside a relatively compact open set $\Omega$. To avoid technicalities we suppose $u\in\mathrm{C}^2(M)$ or $u\in\mathrm{C}^2(M\setminus\overline{\Omega})$ but this assumption can be relaxed as it will become clear from the arguments we are going to present. Also positivity of $u$ can be relaxed as it will be remarked below. We begin by proving a further comparison result with the aid of the open form of the \emph{q-Weak Maximum Principle} (q-WMP) introduced in \cite{AMiR}, see also \cite{AMaR}. For the present purposes we let $L$ be a linear operator of the form
	\beq\label{opL}
	Lu=\Delta u-\g{X}{\nabla u}
	\eeq
for some vector field $X$ on $M$. Let $q(x)\in\mathrm{C}^0(M)$ be such that $q(x)>0$ on $M$. We recall the following
	\begin{defi}
	We say that the \emph{q-Weak Maximum Principle} holds on $M$ for the operator $L$ in \rf{opL} if, for each $u\in\mathrm{C}^2(M)$ with $u^*=\sup_Mu$ and for each $\gamma\in\R$, with $\gamma < u^*$, we have
		\beq
		\inf_{\Omega_{\gamma}}\left(q(x)Lu\right)\leq0\,,
		\eeq
	where
		\beqs
		\Omega_{\gamma}=\left\{x\in M : u(x)>\gamma\right\}\,.
		\eeqs
	\end{defi}
Next result is contained in Theorem 3.5 in \cite{AMiR}.
	\begin{thm}\label{qwmp}
	The q-WMP holds on $M$ for the operator $L$ if and only if the open q-WMP holds on $M$ for $L$, that is, for each $f\in\mathrm{C}^0(\R)$, for each open set $\Omega\in M$, with $\partial \Omega\neq\emptyset$ and for each $v\in\mathrm{C}^0(\overline{\Omega})\cap\mathrm{C}^2(\Omega)$ satisfying
		\beq
		\begin{cases}
		i)\,\quad q(x)Lv\geq f(v)\quad\hbox{on $\Omega$}\,;\\
		ii)\quad \sup_{\Omega}v<+\infty\,,
		\end{cases}		
		\eeq
	we have that either
		\beq
		\sup_{\Omega}v=\sup_{\partial\Omega}v
		\eeq
	or
		\beq
		f(\sup_{\Omega}v)\leq 0\,.
		\eeq
	\end{thm}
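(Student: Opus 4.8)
The plan is to establish the two implications separately. Both are soft arguments: beyond the fact that the operator $L$ in \rf{opL} is local and annihilates constants, one only uses that every boundary point of a superlevel set $\{v>\gamma\}$ of a continuous function satisfies $v=\gamma$ there, that $\sup_\Omega v\ge\sup_{\partial\Omega}v$ always (each boundary point of the open set $\Omega$ being a limit of interior points of $\Omega$), and that $\overline{\{x\in\Omega:v(x)>\gamma\}}\subset\Omega$ as soon as $\gamma>\sup_{\partial\Omega}v$.

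\emph{The implication ``open q-WMP $\Rightarrow$ q-WMP''.} Let $u\in\mathrm{C}^2(M)$ with $u^*=\sup_Mu<+\infty$, fix $\gamma<u^*$, and suppose by contradiction that $\inf_{\Omega_\gamma}(qLu)=2a>0$, so $qLu\ge a$ on $\Omega_\gamma$. If $u\equiv u^*$ then $Lu\equiv0$ and there is nothing to prove; otherwise, since $\Omega_{\gamma'}\subseteq\Omega_\gamma$ whenever $\gamma'\ge\gamma$, we may pass to some $\gamma'\in[\gamma,u^*)$ with $\emptyset\ne\Omega_{\gamma'}\ne M$ (hence $\partial\Omega_{\gamma'}\ne\emptyset$ by connectedness of $M$), still having $qLu\ge a$ on $\Omega_{\gamma'}$. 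Now invoke the open q-WMP on $\Omega=\Omega_{\gamma'}$ with $v=u|_{\overline{\Omega_{\gamma'}}}$ and the constant $f\equiv a\in\mathrm{C}^0(\R)$: the two hypotheses read $qLv\ge a=f(v)$ on $\Omega_{\gamma'}$ and $\sup_{\Omega_{\gamma'}}v\le u^*<+\infty$, while $f(t)\equiv a>0$ rules out the second alternative in the conclusion. Hence $\sup_{\Omega_{\gamma'}}u=\sup_{\partial\Omega_{\gamma'}}u=\gamma'$ (as $u\equiv\gamma'$ on $\partial\Omega_{\gamma'}$), i.e.\ $u\le\gamma'$ on $\Omega_{\gamma'}=\{u>\gamma'\}\ne\emptyset$, a contradiction.

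\emph{The implication ``q-WMP $\Rightarrow$ open q-WMP''.} Suppose the conclusion of the open q-WMP fails for some admissible $f,\Omega,v$; since $\sup_\Omega v\ge\sup_{\partial\Omega}v$ always, this means $v^*:=\sup_\Omega v<+\infty$, $v^*>\sup_{\partial\Omega}v$, and $f(v^*)>0$. Using continuity of $f$, choose $\eta\in(0,\,v^*-\sup_{\partial\Omega}v)$ small enough that $a:=\inf_{[v^*-\eta,\,v^*]}f>0$, and fix $\sup_{\partial\Omega}v<v^*-\eta<\gamma_2<\gamma_1<v^*$. The key construction is to glue $v$ to a constant to obtain a global test function for the q-WMP: let $\lambda\in\mathrm{C}^\infty(\R)$ be nondecreasing with $\lambda(t)=t$ for $t\ge\gamma_1$, $\lambda\equiv\gamma_2$ for $t\le\gamma_2$, and $\lambda(t)\le t$ for every $t$, and set $w=\lambda(v)$ on $\Omega$ and $w\equiv\gamma_2$ on $M\setminus\Omega$. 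Since $v<\gamma_2$ on a neighbourhood in $\overline{\Omega}$ of each point of $\partial\Omega$, the function $w$ is constant $\equiv\gamma_2$ on a full $M$-neighbourhood of $\partial\Omega$, so $w\in\mathrm{C}^2(M)$; moreover $\gamma_2\le w\le v\le v^*$ and $w=v$ wherever $v\ge\gamma_1$, whence $\sup_Mw=v^*<+\infty$. Applying the q-WMP to $w$ at the level $\gamma_1<\sup_Mw$ gives $\inf_{\{w>\gamma_1\}}(qLw)\le0$; but $\{w>\gamma_1\}=\{x\in\Omega:v(x)>\gamma_1\}\ne\emptyset$, and on this open set $w=v$ (so $qLw=qLv$) with $v\in(\gamma_1,v^*]\subset[v^*-\eta,v^*]$, hence $qLw\ge f(v)\ge a>0$ there, a contradiction. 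Therefore the open q-WMP holds.

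I expect the delicate point to be the $\mathrm{C}^2$ regularity of the glued function $w$ across $\partial\Omega$ in the second implication, since $\Omega$ and $\partial\Omega$ need not be relatively compact: this is exactly why one inserts the strict gap $\sup_{\partial\Omega}v<v^*-\eta$, which, by continuity of $v\in\mathrm{C}^0(\overline{\Omega})$ at each boundary point, forces $w$ to coincide with the constant $\gamma_2$ on a genuine neighbourhood of $\partial\Omega$ and so makes $w$ smooth there. A secondary point worth recording is that the q-WMP is read for functions bounded above, $u^*<+\infty$; were $u^*=+\infty$ admitted, the first implication would additionally need the remark that the superlevel sets used can still be chosen nonempty and proper in $M$.
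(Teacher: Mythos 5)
Your proof is correct, and both implications are clean. The paper itself does not prove this theorem — it merely cites Theorem 3.5 of \cite{AMiR} — so there is no internal argument to compare against. Your rendering follows the standard strategy for such equivalences: for ``open $\Rightarrow$ closed'' you restrict $u$ to a nonempty proper superlevel set (whose boundary is nonempty by connectedness of $M$) and feed it to the open form with a positive constant $f$; for ``closed $\Rightarrow$ open'' you truncate $v$ via a suitable smooth $\lambda$ and extend by the constant $\gamma_2$, exploiting the strict gap $\sup_{\partial\Omega}v<\gamma_2$ so that the truncation is constant on a full neighbourhood of $\partial\Omega$ and hence $\mathrm{C}^2$ globally. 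Two small remarks. First, the property $\lambda(t)\le t$ is not actually needed: since $\lambda$ is nondecreasing with $\lambda(t)=t$ for $t\ge\gamma_1$ and $v\le v^*$ on $\Omega$, one directly has $w=\lambda(v)\le\lambda(v^*)=v^*$, which is all you use. Second, you are right that the definition of the q-WMP in the paper must be read with $u^*<+\infty$ (as in \cite{AMiR} and \cite{PRS}); with $u^*=+\infty$ hypothesis~ii) of the open q-WMP would fail for the restricted $v$ in your first implication, so the equivalence as stated only makes sense in the bounded-above case, which is what you proved.
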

The following is a sufficient condition for the validity of the $q$-WMP (see Theorem 4.1 in \cite{PRS}).
	\begin{thm}\label{suffwmp}
	Let $\M$ be complete and $q(x)\leq C r(x)^{\mu}$ for some constants $C>0$, $2>\mu\geq0$, and $r(x)>>1$. Assume that
		\beq\label{Bvol}
		\liminf_{r\rightarrow+\infty}\frac{\log\vol(B_r)}{r^{2-\mu}}<+\infty\,.
		\eeq
	Then the $q$-WMP holds on $M$ for the operator $\Delta$.
	\end{thm}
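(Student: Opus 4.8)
The plan is to argue by contradiction and, at the last step, to invoke the classical volume test for (weighted) stochastic completeness. Suppose the $q$-WMP fails for $\Delta$. Since the principle concerns functions that are bounded above (cf. condition ii) in Theorem~\ref{qwmp}), this gives $u\in\mathrm{C}^2(M)$ with $\Lambda:=u^*=\sup_M u<+\infty$, a number $\gamma_0<\Lambda$, and a constant $\sigma_0>0$ for which $q(x)\Delta u\ge\sigma_0$ on $\Omega_0:=\{u>\gamma_0\}$. As $q$ is continuous, positive, and $q\le Cr^{\mu}$ for $r\gg1$, we may fix $c_0>0$ with $q(x)\le c_0^{-1}(1+r(x))^{\mu}$ on all of $M$, so that
	\beqs
	\Delta u\ \ge\ \frac{c_0\,\sigma_0}{\bigl(1+r(x)\bigr)^{\mu}}\qquad\hbox{on }\Omega_0\,.
	\eeqs

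I would first record two elementary facts. First, $\Omega_0$ is not relatively compact: were $\overline{\Omega}_0$ compact, $u$ would attain $\sup_{\overline{\Omega}_0}u=\Lambda$ at some point $p$; since $u=\gamma_0<\Lambda$ on $\partial\Omega_0$, necessarily $p\in\Omega_0$, whence $\Delta u(p)\le0$, contradicting the displayed inequality. Second, the function $v:=(u-\gamma_0)_+$ is bounded, $0\le v\le v^*:=\Lambda-\gamma_0$, satisfies $\{v>0\}=\Omega_0$ and, by Kato's inequality, $\Delta v\ge c_0\sigma_0(1+r)^{-\mu}\chi_{\Omega_0}\ge0$ weakly on $M$, with strictly positive right-hand side on the unbounded open set $\Omega_0$; moreover $\sup_M v=v^*$ is not attained, and indeed $\sup_K v<v^*$ for every compact $K$, so the supremum is approached only at infinity.

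The core of the proof is to show that such a $v$ cannot coexist with the volume growth bound \rf{Bvol}. Here I would run the Karp--Grigor'yan integration argument (a harmless approximation reduces us to $v\in\mathrm{C}^2$, or one argues directly in the weak formulation): multiply $\Delta v\ge\lambda(x):=c_0\sigma_0(1+r)^{-\mu}\chi_{\Omega_0}$ by $v$, integrate over the geodesic ball $B_r$, and use the divergence theorem together with the Cauchy--Schwarz inequality on the sphere $\partial B_r$ and the co-area formula; writing $A(r):=\int_{B_r}v^2$ one gets, for a.e. $r$,
	\beqs
	\int_{B_r}\lambda\,v\ +\ \int_{B_r}|\nabla v|^2\ \le\ \int_{\partial B_r}v\,|\nabla v|\ \le\ \sqrt{A'(r)}\,\Bigl(\int_{\partial B_r}|\nabla v|^2\Bigr)^{1/2}\,.
	\eeqs
Since $\Omega_0$ contains a fixed geodesic ball and $\lambda\ge c_0\sigma_0(1+r)^{-\mu}$ there, the term $\int_{B_r}\lambda v$ is bounded below by a positive multiple of $r^{-\mu}$ for $r$ large; feeding this into the inequality above, using the crude bound $A'(r)\le(v^*)^2\tfrac{d}{dr}\vol(B_r)$, and integrating, one is driven to a lower estimate $\vol(B_r)\ge\exp(c\,r^{2-\mu})$ along some sequence $r_j\uparrow+\infty$, with $c>0$ — contradicting \rf{Bvol}. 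In other words, this last step is precisely Grigor'yan's volume criterion for stochastic completeness, in the weighted form in which the growth $q\le Cr^{\mu}$ of the weight replaces the exponent $2$ by $2-\mu$; to accommodate the $\liminf$ in \rf{Bvol} sharply (rather than a $\limsup$ or a pointwise bound) I would insert an auxiliary parameter in the weight before integrating. Finally, by Theorem~\ref{qwmp} one may equivalently carry out the whole argument inside an arbitrary open set $\Omega$ with $\partial\Omega\ne\emptyset$, using the open form of the principle, with no change in substance.

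The step I expect to be the main obstacle is exactly this volume estimate: extracting the exponent $2-\mu$ from the bound $q\le Cr^{\mu}$ on the weight, dealing with the $\liminf$ (rather than $\limsup$) sharpness, and handling the mild irregularity of $v=(u-\gamma_0)_+$. By contrast, the reduction to a bounded subsolution with a strictly positive source, and the two elementary observations about $\Omega_0$ and $v$, are entirely soft.
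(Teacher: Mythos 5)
Your soft reductions are fine (failure of the principle gives a bounded $u$ with $q\Delta u\geq\sigma_0>0$ on $\Omega_0=\{u>\gamma_0\}$, non-compactness of $\Omega_0$, passage to $v=(u-\gamma_0)_+$), and your overall strategy --- contradict \rf{Bvol} by a volume lower bound --- is indeed the strategy of the result the paper is quoting (the paper gives no proof: it simply cites Theorem 4.1 of \cite{PRS}, whose proof is a rather delicate iteration). But the step you yourself flag as the main obstacle is a genuine gap, in two distinct ways. First, the conclusion you aim for, namely $\vol(B_{r_j})\geq\exp(c\,r_j^{2-\mu})$ \emph{along some sequence} and \emph{for some} $c>0$, does not contradict \rf{Bvol}: the hypothesis is a $\liminf$, so it is compatible with $\log\vol(B_r)/r^{2-\mu}$ being bounded below by any fixed positive constant, and a lower bound along one sequence says nothing about the (possibly different) sequence realizing the finite $\liminf$. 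To reach a contradiction you must show that failure of the $q$-WMP forces $\log\vol(B_r)/r^{2-\mu}\rightarrow+\infty$, i.e. for \emph{every} $\Lambda>0$ the bound $\vol(B_r)\geq e^{\Lambda r^{2-\mu}}$ for \emph{all} large $r$; in the actual proof this arbitrariness comes from an auxiliary level $\eta\uparrow u^*$ (the constants blow up like a power of $(u^*-\eta)^{-1}$), not from a harmless reweighting, and your sketch contains no mechanism producing it.

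Second, the displayed Karp-type inequality cannot be "driven" to an exponential bound at all. With $D(r)=\int_{B_r}\lambda v$, $G(r)=\int_{B_r}|\nabla v|^2$, $A(r)=\int_{B_r}v^2$, your inequality reads $(D+G)^2\leq A'G'$; since $D$ is bounded below by a positive constant for large $r$ (your own observation), integrating $\frac{D'+G'}{(D+G)^2}\geq\frac{1}{A'}$ gives $\int^{\infty}\frac{dr}{A'}<+\infty$, and Cauchy--Schwarz then yields only $A(r)\geq c\,r^2$, hence $\vol(B_r)\geq c'r^2$: a polynomial bound, independent of $\mu$, and nowhere near $e^{c r^{2-\mu}}$. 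The exponential rate with the correct exponent $2-\mu$, valid for all large radii and with arbitrarily large constant, is precisely the hard content of \cite{PRS}, Theorem 4.1 (see also the GAFA paper \cite{PRSvol}); it is not recovered by a one-shot integration by parts. Finally, the appeal to "Grigor'yan's volume criterion" does not close the gap either: that criterion concerns the unweighted case ($q$ bounded, equivalently stochastic completeness, as remarked after the statement) and requires control of the volume on all radii (an integral/$\limsup$-type condition), whereas here the weight grows like $r^{\mu}$ and the hypothesis is only a $\liminf$ --- the weighted, $\liminf$ version is exactly the theorem to be proved, so invoking it is circular.
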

We observe that when $q$ is constant or more generally bounded between two positive constants then the $q$-WMP is equivalent to stochastic completeness of the manifold $\M$ this underlines the fact that the $q$-WMP does not require completeness of the metric and that Theorem \ref{suffwmp} indeed gives a sufficient condition.
	\begin{thm}\label{comp}
	Let $a(x), b(x), c(x)\in \mathrm{C}^0(M)$ and $\sigma,\tau\in\R$ be such that $\sigma>1$ and $\tau<1$. Let $\Omega$ be a relatively compact open set in $M$. Assume
	\beq\label{hpcomp}
	\begin{aligned}
	& i) \,\, b(x)>0 \quad\hbox{on $M\setminus\Omega$}\\
	& ii) \,\, c(x)\geq0 \quad\hbox{on $M\setminus\Omega$}\\ 
	& iii) \,\, \sup_M\frac{a_-(x)}{b(x)}<+\infty\\
	& iv)\,\, \sup_M\frac{c(x)}{b(x)}<+\infty
	\end{aligned}	
	\eeq
	where, $a_{-}$ denotes the negative part of $a$. Let $u,v\in \mathrm{C}^0(M\setminus\Omega)\cap\mathrm{C}^2(M\setminus\overline{\Omega})$ be positive solutions of
		\beq\label{subsup}
		\begin{cases}
		L u+a(x)u-b(x)u^{\sigma}+c(x)u^{\tau}\geq 0 & \\
		L v+a(x)v-b(x)v^{\sigma}+c(x)v^{\tau}\leq 0 & 
		\end{cases}	
		\eeq
	on $M\setminus\overline{\Omega}$ satisfying
		\beq\label{vlim}
		\liminf_{x\rightarrow+\infty}v(x)>0\,,
		\eeq
		\beq\label{ulim}
		\limsup_{x\rightarrow+\infty}u(x)<+\infty\,,
		\eeq
	and
		\beq\label{ulv}
		0<u(x)\leq v(x)\quad\quad\hbox{on $\partial\Omega$}\,.
		\eeq 
	Then 
		\beq
		u(x)\leq v(x)
		\eeq 
	on $M\setminus\Omega$ provided that the $1/b$-WMP holds on $M\setminus\overline{\Omega}$ for $L$.
	\end{thm}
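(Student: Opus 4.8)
The plan is to argue by contradiction on the quotient $\psi:=u/v$, showing that $\psi^{*}:=\sup_{M\setminus\overline{\Omega}}\psi\le1$, which is exactly the assertion $u\le v$ on $M\setminus\Omega$. First I would record that $\psi\in\mathrm{C}^{0}(M\setminus\Omega)\cap\mathrm{C}^{2}(M\setminus\overline{\Omega})$ and $\psi>0$, since $v>0$, and that $\psi^{*}<+\infty$: by \eqref{vlim} there are $R_{1},\delta_{0}>0$ with $v\ge\delta_{0}$ on $M\setminus B_{R_{1}}$, while on the compact set $\overline{B_{R_{1}}}\setminus\Omega$ the continuous positive function $v$ has a positive minimum, so $v\ge v_{0}>0$ on $M\setminus\Omega$; symmetrically \eqref{ulim} gives a finite bound $u\le u_{0}$ on $M\setminus\Omega$, whence $\psi\le u_{0}/v_{0}$ there, and \eqref{ulv} reads $\psi\le1$ on $\partial\Omega$.

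Next, writing $u=\psi v$ and repeating the elementary computation in the proof of Lemma~\ref{compf} (now with $L$ in place of $\Delta$), one obtains on $M\setminus\overline{\Omega}$
\[
\widetilde{L}\psi:=L\psi+2\g{\nabla\log v}{\nabla\psi}=\frac{1}{v}\,Lu-\frac{u}{v^{2}}\,Lv ,
\]
where $\widetilde{L}$ is again of the form \eqref{opL}, with $X$ replaced by $X-2\nabla\log v$. Abbreviating $g_{x}(s)=a(x)-b(x)s^{\sigma-1}+c(x)s^{\tau-1}$ (so that $a(x)s-b(x)s^{\sigma}+c(x)s^{\tau}=s\,g_{x}(s)$) and inserting the sub/supersolution inequalities \eqref{subsup}, the signs give
\[
\widetilde{L}\psi\ \ge\ \psi\bigl[g_{x}(v)-g_{x}(u)\bigr]=\psi\Bigl[b(x)\bigl(u^{\sigma-1}-v^{\sigma-1}\bigr)+c(x)\bigl(v^{\tau-1}-u^{\tau-1}\bigr)\Bigr]
\]
on $M\setminus\overline{\Omega}$; by Remark~\ref{rmkmonof} the bracket is $\ge0$ at every point where $u\ge v$.

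I would then assume, for contradiction, that $\psi^{*}>1$, fix $\gamma\in(1,\psi^{*})$, and consider the non-empty open set $\Omega_{\gamma}=\{x\in M\setminus\overline{\Omega}:\psi(x)>\gamma\}$. On $\Omega_{\gamma}$ one has $u=\psi v>\gamma v>v>0$, so, using the positivity of $b$ and the sign of $c$ from \eqref{hpcomp}, together with $v\ge v_{0}$, $\sigma-1>0$ and $\tau-1<0$, and discarding the (non-negative) $c$-term,
\[
\frac{1}{b(x)}\,\widetilde{L}\psi\ \ge\ \psi\bigl(u^{\sigma-1}-v^{\sigma-1}\bigr)\ \ge\ \gamma\,(\gamma^{\sigma-1}-1)\,v_{0}^{\sigma-1}\ =:\ c_{\gamma}\ >\ 0\qquad\text{on }\Omega_{\gamma}.
\]
Since $\psi\le1<\gamma$ on $\partial\Omega$ and $\psi$ is continuous up to $\partial\Omega$, the closure of $\Omega_{\gamma}$ misses $\partial\Omega$, so $\partial\Omega_{\gamma}\ne\emptyset$, $\psi\equiv\gamma$ on $\partial\Omega_{\gamma}$, and $\sup_{\Omega_{\gamma}}\psi=\psi^{*}<+\infty$. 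Applying the open $1/b$-weak maximum principle (Theorem~\ref{qwmp}) on $M\setminus\overline{\Omega}$ for the operator $\widetilde{L}$, to the function $\psi$, the set $\Omega_{\gamma}$, and the constant $f\equiv c_{\gamma}$, one gets that either $\sup_{\Omega_{\gamma}}\psi=\sup_{\partial\Omega_{\gamma}}\psi=\gamma$ or $f(\sup_{\Omega_{\gamma}}\psi)=c_{\gamma}\le0$; the second is impossible and the first contradicts $\sup_{\Omega_{\gamma}}\psi=\psi^{*}>\gamma$. Hence $\psi^{*}\le1$, i.e.\ $u\le v$ on $M\setminus\Omega$.

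The main obstacle, I expect, will be making the hypotheses cooperate so that $\widetilde{L}\psi/b$ is bounded below on $\Omega_{\gamma}$ by a \emph{strictly positive constant}: it is precisely here that the positivity of $b$ outside $\Omega$ in \eqref{hpcomp} and the uniform lower bound $v_{0}>0$ supplied by \eqref{vlim} (which, together with \eqref{ulim}, also forces $\psi^{*}<+\infty$) are indispensable. A secondary delicate point is that the supremum of $\psi$ need not be attained in $M\setminus\overline{\Omega}$, which is exactly why one must invoke the \emph{open} form of the $1/b$-WMP (Theorem~\ref{qwmp}) for the quotient operator $\widetilde{L}=L+2\g{\nabla\log v}{\nabla\,\cdot\,}$ rather than an ordinary maximum principle; one should also confirm that the $1/b$-WMP assumed for $L$ on $M\setminus\overline{\Omega}$ persists under the addition of the drift $2\nabla\log v$ (equivalently, one may keep $L$ and carry the term $2\g{\nabla\log v}{\nabla\psi}$ through the estimate). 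Once these are settled, the conclusion is a direct application of Theorem~\ref{qwmp}.
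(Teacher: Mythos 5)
Your plan — argue by contradiction on the quotient $\psi=u/v$, show $\sup\psi\le1$ via the open $q$-WMP — is natural, and most of the pointwise computation is correct: the inequality $\widetilde L\psi\ge\psi\bigl[b(x)(u^{\sigma-1}-v^{\sigma-1})+c(x)(v^{\tau-1}-u^{\tau-1})\bigr]$, the lower bound $v\ge v_0>0$ from \eqref{vlim}, the upper bound $u\le u_0$ from \eqref{ulim}, and the estimate $\frac1b\widetilde L\psi\ge\gamma(\gamma^{\sigma-1}-1)v_0^{\sigma-1}>0$ on $\Omega_\gamma$ all check out. But the step where you invoke Theorem~\ref{qwmp} has a genuine gap, and you have in fact put your finger on it yourself: the open $1/b$-WMP that you apply is for the operator $\widetilde L=L+2\g{\nabla\log v}{\nabla\,\cdot\,}=\Delta-\g{X-2\nabla\log v}{\nabla\,\cdot\,}$, which is \emph{not} the operator $L$ for which the theorem's hypothesis guarantees the $1/b$-WMP. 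Theorem~\ref{qwmp} is an equivalence between the $q$-WMP and the open $q$-WMP for a \emph{fixed} operator; it gives you nothing about $\widetilde L$ from information about $L$. There is no a priori reason the WMP should be stable under adding a drift $-2\nabla\log v$ on which we have no bounds (we do not control $|\nabla v|$ near infinity), and indeed for $q\equiv 1$ the WMP is stochastic completeness, a property that is well known not to be preserved under arbitrary drifts. Your parenthetical alternative, ``keep $L$ and carry the term $2\g{\nabla\log v}{\nabla\psi}$ through the estimate'', does not close the gap either: writing $L\psi=\widetilde L\psi-2\g{\nabla\log v}{\nabla\psi}$ introduces a term $-2\g{\nabla\log v}{\nabla\psi}$ that has no useful sign or bound on $\Omega_\gamma$, so you cannot conclude $\frac1bL\psi\ge c>0$ there.

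The paper sidesteps this entirely by not dividing. Setting $\zeta=\sup_{M\setminus\overline\Omega}(u/v)$ (finite and positive for the same reasons you give) and assuming $\zeta>1$, it works with the \emph{linear} combination $\varphi=u-\zeta v$, so that $L\varphi=Lu-\zeta Lv$ carries no drift correction and the hypothesized $1/b$-WMP for $L$ applies verbatim. The price is a slightly heavier pointwise estimate: one expands $b(u^\sigma-\zeta v^\sigma)$ and $c(u^\tau-\zeta v^\tau)$ around $\zeta v$, controls the resulting difference quotients $h,j$ via the mean value theorem and the boundedness of $u,v$ on the superlevel set $\Omega_{-1}=\{\varphi>-1\}$, uses \eqref{hpcomp} iii)--iv) to absorb $a_-/b$ and $c/b$, and finally uses the elementary inequalities \eqref{hlp} together with $v\ge C_1>0$ to produce a strict positive constant $B$ with $\frac1bL\varphi\ge C\varphi+B$ on $\Omega_{-1}$. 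Shrinking to $\Omega_{-\ep}$ makes the right side $\ge B/2>0$ while $\sup_{\Omega_{-\ep}}\varphi=0>\sup_{\partial\Omega_{-\ep}}\varphi$, and the open $1/b$-WMP for $L$ gives the contradiction. If you want to rescue the quotient route, you must either add a hypothesis granting the $1/b$-WMP for all operators of the form \eqref{opL} (as some papers do), or prove separately that it transfers from $L$ to $\widetilde L$ in your setting; as written, the step is unjustified, and the linear-combination device is precisely the tool that makes the given hypothesis sufficient.
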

	\begin{rmk}
		As it will be observed in the proof of the theorem, in case $0\leq\tau<1$ assumption \rf{hpcomp} iv) can be dropped. 
		\end{rmk}
		\begin{proof}
		Without loss of generality we can suppose that $M\setminus\overline{\Omega}$ is connected. From positivity of $v$, \rf{vlim}, \rf{ulim}, and \rf{ulv} there exist positive constants $C_1$, $C_2$ such that
			\beq\label{uvc}
			v(x)\geq C_1\quad\quad u(x)\leq C_2\quad\quad\hbox{on $M\setminus\overline{\Omega}$}\,.
			\eeq
		We set $\zeta=\sup_{M\setminus\overline{\Omega}}\left(\frac{u}{v}\right)$, from the assumptions on $v$, $u$, and \rf{uvc} it follows that $\zeta$ satisfies
			\beq\label{zlim}
			0<\zeta<+\infty.
			\eeq
		Note that if $\zeta\leq 1$ then $u\leq v$ on $M\setminus\overline{\Omega}$. Thus, assume by contradiction that $\zeta>1$ and define
			\beqs
			\varphi=u-\zeta v\,,
			\eeqs
		then $\varphi\leq 0$ on $M\setminus\overline{\Omega}$. It is not hard to realize, using \rf{zlim} and the definition of $\zeta$, that
			\beq\label{phisup}
			\sup_{M\setminus\overline{\Omega}}\varphi=0.
			\eeq
		We now use \rf{subsup} to compute
			\beq\label{F6}
			\begin{aligned}
			L\varphi & \geq -a(x)\varphi+b(x)\left[u^{\sigma}-(\zeta v)^{\sigma}\right]-c(x)\left[u^{\tau}-(\zeta v)^{\tau}\right]\\
			& \quad\quad\quad\quad+b(x)\zeta v\left[(\zeta v)^{\sigma-1}-v^{\sigma-1}\right]+c(x)\zeta v\left[v^{\tau-1}-(\zeta v)^{\tau-1}\right]\,.
			\end{aligned}
			\eeq
		We let
			\beqs
			h(x)=\left\{\begin{array}{cc}
			\sigma u^{\sigma-1}(x) & \mathrm{if} \quad u(x)=\zeta v(x) \\
			{} & \\
			\displaystyle{\frac{\sigma}{u(x)-\zeta v(x)}\int_{\zeta v(x)}^{u(x)}t^{\sigma-1}dt} & \mathrm{if} \quad u(x)<\zeta v(x).
			\end{array} \right.
			\eeqs
		and, similarly, for $\tau\neq0$,
			\beqs
			j(x)=\left\{\begin{array}{cc}
			-\tau u^{\tau-1}(x) & \mathrm{if} \quad u(x)=\zeta v(x) \\
			{} & \\
			\displaystyle{\frac{\tau}{\zeta v(x)-u(x)}\int_{\zeta v(x)}^{u(x)}t^{\tau-1}dt} & \mathrm{if} \quad u(x)<\zeta v(x).
			\end{array} \right.
			\eeqs
		In case $\tau=0$ choose $j(x)\equiv0$. Observe that $h$ and $j$ are continous on $M\setminus\overline{\Omega}$ and $h$ is non-negative. Using $h$ and $j$, and observing that $-a(x)\varphi\geq a_{-}(x)\varphi$, from \rf{F6} we obtain
			\beq\label{F7}
			\begin{aligned}
			L\varphi & \geq \left[a_-(x)+b(x)h(x)+c(x)j(x)\right]\varphi\\
			&\quad\quad\quad\quad +b(x)\zeta v\left[(\zeta v)^{\sigma-1}-v^{\sigma-1}\right]+c(x)\zeta v\left[v^{\tau-1}-(\zeta v)^{\tau-1}\right]\,.
			\end{aligned}
			\eeq
		Let
			\beqs
			\Omega_{-1}=\{x\in M\setminus\overline{\Omega} : \varphi(x)>-1 \}.
			\eeqs
		Since $u$ is bounded above on $M\setminus\overline{\Omega}$, there exists a constant $C>0$ such that
			\beq\label{F8}
			v(x)=\frac{1}{\zeta}(u(x)-\varphi(x))\leq\frac{1}{\zeta}(C+1)
			\eeq
		on $\Omega_{-1}$. Using the definitions of $h$ and $j$, from the mean value theorem for integrals, we deduce
			\beqs
			h(x)=\sigma y_h^{\sigma-1}, \quad\quad j(x)=-\tau y_j^{\tau-1}
			\eeqs
		for some $y_{h}=y_{h}(x)$ and $y_j=y_j(x)$ in the range $[u(x),\zeta v(x)]$. Since $u(x)$ and $v(x)$ are bounded above on $\Omega_{-1}$
			\beq\label{F9}
			\max\left\{h(x),j(x\right\})\leq C
			\eeq
		on $\Omega_{-1}$ for some constant $C>0$.
		Next we recall that $b(x)>0$ on $M\setminus\overline{\Omega}$ to rewrite \rf{F7} in the form
			\beqs
			\begin{aligned}
			\frac{1}{b(x)}L\varphi & \geq \left[\frac{a_-(x)}{b(x)}+h(x)+\frac{c(x)}{b(x)}j_+(x)\right]\varphi\\
			&\quad\quad\quad\quad +\zeta v\left[(\zeta v)^{\sigma-1}-v^{\sigma-1}\right]+\frac{c(x)}{b(x)}\zeta v\left[v^{\tau-1}-(\zeta v)^{\tau-1}\right]\,.\\
			\end{aligned}
			\eeqs
		Since $\varphi\leq 0$, \rf{hpcomp} and \rf{F9} imply
			\beqs
			\left[\frac{a_-(x)}{b(x)}+h(x)+\frac{c(x)}{b(x)}j_+(x)\right]\varphi\geq C\varphi
			\eeqs
		for some constant $C>0$ on $\Omega_{-1}$. For further use we observe here that when $0\leq\tau<1$, $j_+(x)\equiv 0$ so that in this case assumption \rf{hpcomp} iv) is not needed to obtain this last inequality. Thus
			\beqs
			\frac{1}{b(x)}L\varphi\geq C\varphi+\zeta v\left[(\zeta v)^{\sigma-1}-v^{\sigma-1}\right]+\frac{c(x)}{b(x)}\zeta v\left[v^{\tau-1}-(\zeta v)^{\tau-1}\right]
			\eeqs
		on $\Omega_{-1}$. Recalling the elementary inequalities 
			\beq\label{hlp}
			\begin{cases}
			a^s-b^s \geq sb^{s-1}(a-b) & \hbox{for $s<0$ and $s>1$}\,;\\
			a^s-b^s \geq sa^{s-1}(a-b) & \hbox{for $0\leq s\leq 1$}\,,
			\end{cases}
			\eeq
		$a$, $b\in\R^+$, coming from the mean value theorem for integrals (see Theorem 41 in \cite{HLP}) we conclude
			\beqs
			\frac{1}{b(x)}L\varphi\geq C\varphi+(\sigma-1)\zeta^{\min\left\{1,\sigma-1\right\}}(\zeta-1)v^{\sigma}+\left(1-\tau\right)\frac{c(x)}{b(x)}\frac{\zeta-1}{\zeta^{1-\tau}}v^{\tau}\,,
			\eeqs
		on $\Omega_{-1}$. Now we use the fact that $\tau<1$, $v$ is bounded from below by a positive constant, \rf{hpcomp} i), ii), iv) to get (again if $0\leq\tau<1$ we do not need \rf{hpcomp} iv))
			\beqs
			\frac{1}{b(x)}L\varphi\geq C\varphi+B \quad\hbox{on  $\Omega_{-1}$},
			\eeqs
		for some positive constants $B$, $C$. Finally, we choose $0<\varepsilon<1$ sufficiently small that
			\beqs
			C\varphi>-\frac{1}{2}B
			\eeqs
		on
			\beqs
			\Omega_{-\varepsilon}=\{x\in M\setminus\overline{\Omega} : \varphi(x)>-\varepsilon \}\subset\Omega_{-1}.
			\eeqs
		Therefore
			\beq\label{omep}
			\frac{1}{b(x)}L\varphi\geq \frac{1}{2}B>0 \quad on \quad \Omega_{-\varepsilon}.
			\eeq
		Furthermore, note that
			\beqs
			\varphi(x)\leq\min\left\{-\ep,\ (1-\zeta)\min_{\partial\Omega}v\right\}<0\quad\quad\hbox{on $\Omega_{-\ep}$}\,.
			\eeqs
		As a consequence $\sup_{\partial\Omega_{-\ep}}\varphi<0$ while $\sup_{\Omega_{-\ep}}\varphi=0$. By Theorem \ref{qwmp}, \rf{omep} and the above fact, we obtain the required contradiction, proving
that $\zeta\leq 1$.
		\end{proof}
As an immediate consequence of Theorem \ref{comp} we obtain the following uniqueness result
	\begin{cor}\label{uniq}
	In the assumptions of Theorem \ref{comp} the equation
		\beqs
		L u+a(x)u-b(x)u^{\sigma}+c(x)u^{\tau}= 0 \quad \hbox{on $M\setminus\overline{\Omega}$,}	
		\eeqs
	admits at most a unique solution $u\in\mathrm{C}^0(M\setminus\Omega)\cap\mathrm{C}^2(M\setminus\overline{\Omega})$ with assigned boundary data on $\partial\Omega$ and satisfying
		\beq\label{CuC}
		C_1\leq u(x)\leq C_2\quad\quad\hbox{on $M\setminus\overline{\Omega}$}
		\eeq	
	for some positive constants $C_1$, $C_2$, provided that the $1\slash b$-WMP holds on $M$ for the operator $L$.
	\end{cor}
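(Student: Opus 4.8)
The plan is to obtain this uniqueness statement by a direct double application of the comparison principle of Theorem~\ref{comp}, once with each ordering of the two candidate solutions.

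Concretely, I would let $u_1,u_2\in\mathrm{C}^0(M\setminus\Omega)\cap\mathrm{C}^2(M\setminus\overline{\Omega})$ be two positive solutions of the equation on $M\setminus\overline{\Omega}$ that agree on $\partial\Omega$ and both obey \rf{CuC} (replacing $C_1$ by the smaller of the two lower bounds and $C_2$ by the larger of the two upper bounds if needed, so that a common pair of constants works for both). The first step is to check that the pair $(u,v)=(u_1,u_2)$ satisfies the hypotheses of Theorem~\ref{comp}: each $u_i$, being a solution, is at once a sub- and a supersolution, so \rf{subsup} holds; the lower bound $v=u_2\ge C_1>0$ gives \rf{vlim}; the upper bound $u=u_1\le C_2$ gives \rf{ulim}; and the coincidence of the boundary data on $\partial\Omega$, together with positivity, gives \rf{ulv}. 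Since by assumption the $1/b$-WMP holds on $M$ for $L$, it holds in particular on $M\setminus\overline{\Omega}$, which is the form required in Theorem~\ref{comp}. Invoking the theorem then yields $u_1\le u_2$ on $M\setminus\Omega$.

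The second step is simply to repeat the argument with the roles of $u_1$ and $u_2$ interchanged, which is legitimate precisely because \rf{CuC} is symmetric in the two functions and they share the same boundary data, obtaining $u_2\le u_1$ on $M\setminus\Omega$. Combining the two inequalities gives $u_1\equiv u_2$, which is the claimed uniqueness. I do not expect any genuine obstacle here, since all of the analytic content is already carried by Theorem~\ref{comp}; the only points meriting a line of justification are that a two-sided bound of the type \rf{CuC} supplies exactly the one-sided asymptotic conditions \rf{vlim} and \rf{ulim} demanded by the theorem, and that validity of the $1/b$-WMP on the whole of $M$ passes to the exterior domain $M\setminus\overline{\Omega}$. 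Should one wish to weaken the positivity requirement, as the surrounding text hints, the same scheme would go through using the nonnegative-$u$ versions of Theorem~\ref{comp} and Lemma~\ref{compf}.
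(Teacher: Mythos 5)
Your argument is correct and is exactly what the paper intends: the corollary is stated there as an immediate consequence of Theorem \ref{comp}, obtained by applying the comparison twice with the roles of the two solutions interchanged, the two-sided bound \rf{CuC} supplying \rf{vlim} and \rf{ulim} and the common boundary data supplying \rf{ulv}. Your remark that the $1/b$-WMP on $M$ suffices for the exterior domain is also consistent with the paper's usage, since the proof of Theorem \ref{comp} only invokes the open form of the WMP on subsets of $M$.
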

We observe that the two main assumptions in Corollary \ref{uniq} are the validity of the $1\slash b$-WMP on $M$ for $L$ and the validity of the bounds \rf{CuC}. In case $L=\Delta$ in Theorem \ref{suffwmp} we have given a sufficient condition for the validity of the $1\slash b$-WMP, for $b>0$ everywhere it remains to analyze \rf{CuC}. Towards this aim we recall the following result companion of Theorem \ref{suffwmp} and whose proof can be easily adapted from \cite{PRS} and \cite{PRSvol}.
	\begin{thm}\label{thmb}
	Let $\M$ be a complete Riemannian manifold and $a(x)$, $b(x)\in \mathrm{C}^0(M)$. Assume that $\left\|a_+\right\|_{\infty}< +\infty$, $b(x)>0$ on $M$, and
		\beq\label{brmu}
		b(x)\geq \frac{C}{r(x)^{\mu}}
		\eeq	
	outside a compact set $K$ for some constants $C>0$ and $\mu<2$. Assume \rf{Bvol} and
		\beqs
		\sup_{M}\frac{a_+(x)}{b(x)}<+\infty\,. 
		\eeqs
	Let $u \in \mathrm{C}^2(M)$ be a non-negative solution of
		\beqs
		\Delta u + a(x)u - b(x)u^{\sigma} \geq0 \quad\quad \hbox{on $\Omega_{\gamma}$}\,,
		\eeqs
	where $\sigma>1$ and
		\beqs
		\Omega_{\gamma}=\left\{x\in M : u(x)>\gamma\right\}
		\eeqs
	for some $\gamma < u^*\leq+\infty$. Then $u^*<+\infty$. Furthermore, having set
		\beqs
		H_{\gamma}=\sup_{\Omega_{\gamma}}\frac{a_+(x)}{b(x)}\,,
		\eeqs
	we have
		\beq\label{u*b}
		u^*\leq H_{\gamma}^{{1}\slash({\sigma-1})}\,.
		\eeq
	\end{thm}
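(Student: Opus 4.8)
The plan is to run the open form of the weighted weak maximum principle with weight $q(x)=1/b(x)$. First I would check the hypotheses of Theorem~\ref{suffwmp} for this $q$: since $b\in\mathrm{C}^0(M)$ and $b>0$, the function $1/b$ is bounded on the compact set $K$, while \rf{brmu} gives $q(x)=1/b(x)\le C^{-1}r(x)^{\mu}$ for $r(x)\gg1$ with $0\le\mu<2$; together with the volume growth \rf{Bvol} this yields that the $1/b$-WMP holds on $M$ for $\Delta$, hence by Theorem~\ref{qwmp} so does the open $1/b$-WMP.

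Next I would put the inequality in the form demanded by the open WMP. Dividing $\Delta u+a(x)u-b(x)u^{\sigma}\ge0$ by $b(x)>0$ on $\Omega_{\gamma}$ and using $u\ge0$ (so that $-a(x)u\ge-a_+(x)u$) together with $a_+(x)/b(x)\le H_{\gamma}$ on $\Omega_{\gamma}$, one obtains
\[
\frac{1}{b(x)}\Delta u\ \ge\ u^{\sigma}-\frac{a_+(x)}{b(x)}\,u\ \ge\ u^{\sigma}-H_{\gamma}u\ =\ f(u)\qquad\text{on }\Omega_{\gamma},
\]
where $f(t)=t^{\sigma}-H_{\gamma}t\in\mathrm{C}^0(\R)$ and $H_{\gamma}\le\sup_{M}a_+/b<+\infty$. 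Since $\sigma>1$, one has $f(t)>0$ exactly for $t>H_{\gamma}^{1/(\sigma-1)}$, $f$ is non-decreasing on $[H_{\gamma}^{1/(\sigma-1)},+\infty)$, and $f$ satisfies a Keller--Osserman condition at infinity.

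Granting for the moment that $u^{*}<+\infty$, the bound \rf{u*b} follows readily. If $u$ is constant the conclusion is immediate from the differential inequality, so assume $u$ non-constant; fix $x_0$ with $u(x_0)<u^{*}$ and $t$ with $\max\{\gamma,u(x_0)\}<t<u^{*}$. Then $\Omega_t=\{u>t\}$ is a nonempty open set, $\Omega_t\subseteq\Omega_{\gamma}$, $\Omega_t\neq M$, so $\partial\Omega_t\neq\emptyset$ and $u\equiv t$ on $\partial\Omega_t$ by continuity, while $\sup_{\Omega_t}u=u^{*}<+\infty$. Applying the open $1/b$-WMP to $u$ on $\Omega_t$ with the function $f$ above gives either $\sup_{\Omega_t}u=\sup_{\partial\Omega_t}u$ --- impossible, since it would force $u^{*}=t<u^{*}$ --- or $f(u^{*})\le0$, i.e. $(u^{*})^{\sigma}\le H_{\gamma}u^{*}$, whence $u^{*}\le H_{\gamma}^{1/(\sigma-1)}$.

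The real point, and what I expect to be the main obstacle, is the a priori finiteness $u^{*}<+\infty$: the argument just sketched cannot be run until boundedness is known, because hypothesis ii) of the open WMP presupposes $\sup_{\Omega}v<+\infty$. For this I would adapt the Keller--Osserman-type arguments of \cite{PRS} and \cite{PRSvol}: the inequality $b(x)^{-1}\Delta u\ge f(u)$ on $\Omega_{\gamma}$ with $f$ Keller--Osserman (which is precisely where $\sigma>1$ enters), together with completeness of $M$ and the volume growth \rf{Bvol}, and with \emph{no} curvature assumption, already forces $u^{*}<+\infty$; the mechanism is to argue by contradiction, assuming $u^{*}=+\infty$, and test the differential inequality against cut-off functions supported in geodesic annuli, using \rf{Bvol} to control the resulting boundary terms --- the same device underlying Theorem~\ref{suffwmp}, now accommodating the coercive right-hand side $f$. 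Once $u^{*}<+\infty$ is secured, the third paragraph yields \rf{u*b} and completes the proof.
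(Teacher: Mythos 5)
The paper itself gives no proof of Theorem~\ref{thmb} beyond a pointer to \cite{PRS} and \cite{PRSvol}, so the comparison must be against the argument those references supply. Your reduction is correct: $q=1/b$ satisfies the hypotheses of Theorem~\ref{suffwmp} (hence of Theorem~\ref{qwmp}) under \rf{brmu} and \rf{Bvol}, and from $u\ge0$, $a\le a_+$, and $a_+/b\le H_\gamma$ on $\Omega_\gamma$ one indeed gets $b^{-1}\Delta u \ge u^\sigma - H_\gamma u$ there. Your application of the open $1/b$-WMP on $\Omega_t$ is also sound: $u\equiv t$ on $\partial\Omega_t$ while $\sup_{\Omega_t}u=u^*>t$, so the alternative $\sup_{\Omega_t}u=\sup_{\partial\Omega_t}u$ is impossible, and $f(u^*)\le0$ yields \rf{u*b} once $u^*<+\infty$ is known. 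You correctly single out that finiteness as the genuine work and correctly name its source --- the Keller--Osserman growth of $f(t)=t^\sigma-H_\gamma t$ together with an annular cut-off computation driven by \rf{Bvol}; that is exactly the mechanism in \cite{PRSvol}, where in fact the finiteness of $u^*$ and the quantitative bound \rf{u*b} are extracted from one and the same cut-off computation. Your variant --- first finiteness via Keller--Osserman, then the explicit bound via the open WMP already packaged in Theorems~\ref{suffwmp} and~\ref{qwmp} --- is a somewhat more modular route to the same conclusion, at the cost of leaving the finiteness step as a sketch rather than carrying it out; the underlying analytic device is the same either way, so the proposal is essentially in line with the intended proof.
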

We are now ready to prove
	\begin{pro}\label{upapr}
	Let $\M$ be a complete Riemannian manifold. Let $a(x)$, $b(x)$, $c(x)\in \mathrm{C}^0(M)$, and assume $\left\|a_++c_+\right\|_{\infty}< +\infty$, that $b(x)>0$ on $M$ and that it satisfies \rf{brmu} for some $\mu<2$ outside a compact set. Suppose the validity of \rf{Bvol} and of
		\beq\label{acbapr}
		\sup_M\frac{a_+(x)+c_+(x)}{b(x)}<+\infty\,. 
		\eeq
	Let $\sigma > 1$, $\tau<1$, and $u \in\mathrm{C}^2(M)$ be a positive solution of
		\beq\label{apreq}
			\Delta u+ a(x)u - b(x)u^{\sigma} + c(x)u^{\tau}\geq 0
		\eeq
	on 
		\beq\label{omgam}
		\Omega_{\gamma}=\left\{x\in M : u(x)>\gamma\right\}\,,
		\eeq
	for some $\gamma < u^*\leq+\infty$.	Then $u^*<+\infty$ and indeed
		\beq\label{u*H}
		u^*\leq\max\left\{\gamma^*,\,H_{\gamma^*}^{1\slash(\sigma-1)}\right\}
		\eeq	
	where $\gamma^*=\max\left\{1,\gamma\right\}$ and 
		\beqs
		H_{\gamma^*}=\sup_{\Omega_{\gamma^*}}\frac{a_+(x)+c_+(x)}{b(x)}\,.
		\eeqs
	\end{pro}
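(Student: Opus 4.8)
The plan is to reduce Proposition~\ref{upapr} to Theorem~\ref{thmb} by absorbing the lower order term $c(x)u^{\tau}$ into the zeroth order coefficient, at the price of replacing the super-level set $\Omega_{\gamma}$ by the smaller set $\Omega_{\gamma^*}$ on which $u$ is bounded below by $1$. First one disposes of the trivial case: if $u^*\le\gamma^*$ there is nothing to prove, since then $\Omega_{\gamma^*}=\emptyset$, $u^*\le\gamma^*=\max\{1,\gamma\}<+\infty$, and \rf{u*H} holds automatically. So from now on assume $\gamma^*<u^*\le+\infty$; since $\gamma^*=\max\{1,\gamma\}\ge\gamma$ we have $\emptyset\neq\Omega_{\gamma^*}\subseteq\Omega_{\gamma}$, so \rf{apreq} is in force on $\Omega_{\gamma^*}$.

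The key observation is elementary: on $\Omega_{\gamma^*}$ we have $u>\gamma^*\ge1$, hence $\tau<1$ gives $u^{\tau}\le u$, and therefore $c(x)u^{\tau}\le c_+(x)u^{\tau}\le c_+(x)u$ there, irrespective of the sign of $c(x)$. Setting $\widetilde a(x)=a(x)+c_+(x)$, this yields
\[
\Delta u+\widetilde a(x)u-b(x)u^{\sigma}\ \ge\ \Delta u+a(x)u+c(x)u^{\tau}-b(x)u^{\sigma}\ \ge\ 0\qquad\text{on }\Omega_{\gamma^*}.
\]
Since $a\le a_+$ and $c_+\ge0$, one has $\widetilde a_+\le a_++c_+$ pointwise, so $\|\widetilde a_+\|_\infty\le\|a_++c_+\|_\infty<+\infty$ and, by \rf{acbapr}, $\sup_M(\widetilde a_+/b)\le\sup_M\bigl((a_++c_+)/b\bigr)<+\infty$. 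Together with $b>0$ on $M$, the decay \rf{brmu}, and the volume growth bound \rf{Bvol}, all the hypotheses of Theorem~\ref{thmb} are satisfied, with $\widetilde a$ in the role of $a$, with $\gamma^*<u^*$ in the role of $\gamma$, and with $u$ a non-negative (indeed positive) solution of the associated Yamabe-type differential inequality on $\Omega_{\gamma^*}$.

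Theorem~\ref{thmb} then gives $u^*<+\infty$ together with $u^*\le\bigl(\sup_{\Omega_{\gamma^*}}(\widetilde a_+/b)\bigr)^{1/(\sigma-1)}\le H_{\gamma^*}^{1/(\sigma-1)}$, where the last step again uses $\widetilde a_+\le a_++c_+$; combining this with the trivial case produces \rf{u*H}. I do not expect a genuine obstacle: the only steps deserving care are the case split according to whether $u^*$ exceeds $\gamma^*$ --- which is precisely why the maximum with $\gamma^*$ appears in \rf{u*H} --- and the bookkeeping needed to see that the differential inequality survives the passage from $\Omega_{\gamma}$ to $\Omega_{\gamma^*}$, which is immediate from $\Omega_{\gamma^*}\subseteq\Omega_{\gamma}$ and the elementary bound $c(x)u^{\tau}\le c_+(x)u$, valid on $\Omega_{\gamma^*}$ because there $u\ge1$ and $\tau<1$.
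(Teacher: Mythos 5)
Your proposal is correct and follows essentially the same route as the paper: absorb the term $c(x)u^{\tau}$ into the zeroth-order coefficient using $u^{\tau}\le u$ on the super-level set where $u\ge1$, and then invoke Theorem~\ref{thmb}. The only (immaterial) cosmetic difference is that you set $\widetilde a=a+c_+$ and then note $(\widetilde a)_+\le a_++c_+$, whereas the paper replaces $a$ by $a_+$ outright to form $a_++c_+$ directly; both versions discharge the hypotheses of Theorem~\ref{thmb} and yield the same bound $H_{\gamma^*}^{1/(\sigma-1)}$.
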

		\begin{proof}
		First we show that $u^*<+\infty$. We can suppose $u^*>1$. If $\gamma<1$ we let $\widetilde{\gamma}$	be such that $1\leq\widetilde{\gamma}<u^*$ and note that $\Omega_{\widetilde{\gamma}}\subset\Omega_{\gamma}$. It follows that \rf{apreq} holds on $\Omega_{\widetilde{\gamma}}$. Thus, without loss of generality, we can suppose $\gamma\geq1$. Since $u^{\tau}\leq u$ on $\Omega_{\gamma}$, from \rf{apreq} we have
			\beqs
			\begin{aligned}
			\Delta u+ a_+(x)u - b(x)u^{\sigma} + c_+(x)u \geq & \Delta u+ a_+(x)u - b(x)u^{\sigma} + c_+(x)u^{\tau}\\
			\geq & \Delta u+ a(x)u - b(x)u^{\sigma} + c(x)u^{\tau}\\
			\geq & 0\,,
			\end{aligned}
			\eeqs
		on $\Omega_{\gamma}$; in other words
			\beqs
			\Delta u+ \left[a_+(x)+c_+(x)\right]u - b(x)u^{\sigma} \geq 0 \quad \hbox{on $\Omega_{\gamma}$}.
			\eeqs
		Applying Theorem \ref{thmb} we deduce $u^*<+\infty$. To prove \rf{u*H} first let $\gamma\geq1$ so that $\gamma^*=\gamma$, $\Omega_{\gamma^*}=\Omega_{\gamma}$, $H_{\gamma^*}=H_{\gamma}$ and \rf{u*H} follows directly from \rf{u*b} of Theorem \ref{thmb}. Suppose now $\gamma<1$. Then $\gamma^*=1$ and $\Omega_{\gamma^*}=\Omega_1\subset\Omega_{\gamma}$. If $\Omega_{1}=\emptyset$ then $u^*\leq\gamma^*$. If $\Omega_{1}\neq\emptyset$ then \rf{apreq} holds on $\Omega_{1}$ and applying again Theorem \ref{thmb} we deduce the validity of \rf{u*H}.
		\end{proof}

Now, as in case ii) of Proposition \ref{suffsigma}, we are going to exploit the simmetry of equation \rf{lich} to obtain a bilateral \emph{a priori} estimate. This is the content of the next crucial 

	\begin{thm}\label{thmapr}
	Let $\M$ be a complete Riemannian manifold. Let $a(x)$, $b(x)$, $c(x)\in \mathrm{C}^0(M)$, $\left\|a_++c\right\|_{\infty}<+\infty$, $\left\|a_-+b\right\|_{\infty}<+\infty$. Moreover assume that $b(x)>0$ and $c(x)>0$ on $M$, and that both satisfy \rf{brmu} for some $\mu<2$. Suppose the validity of \rf{Bvol} and of 
		\beq\label{acb}
		\sup_{M}\frac{a_+(x)+c(x)}{b(x)}= H<+\infty \,,
		\eeq
	and
		\beq\label{abc}
		\sup_{M}\frac{a_-(x)+b(x)}{c(x)}= K<+\infty \,.
		\eeq
	Let $\sigma > 1$, $\tau<1$. Then any positive, $\mathrm{C}^2$ solution of
		\beq\label{apreq2}
			\Delta u+ a(x)u - b(x)u^{\sigma} + c(x)u^{\tau}=0 \quad \hbox{on $M$},
		\eeq	
	satisfies 
		\beq\label{aprsubsup}
		\mathcal{K} \leq u(x) \leq \mathcal{H} \quad\hbox{on $M$},
		\eeq
	where
		\beq\label{calKH}
		\mathcal{K}= \min\left\{1,\,K^{1\slash(\tau-1)}\right\},\quad\quad\mathcal{H}=\max\left\{1,\,H^{1\slash(\sigma-1)}\right\}.
		\eeq
	\end{thm}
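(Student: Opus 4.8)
The plan is to obtain the upper bound $u \le \mathcal H$ and the lower bound $u \ge \mathcal K$ separately, exploiting the formal symmetry of equation \rf{lich} under $u \mapsto 1/u$ together with the appropriate interchange of the roles of $b$ and $c$, and of $\sigma$ and $\tau$. The upper bound is essentially immediate from what we have already proved. Since $u$ is a positive $\mathrm{C}^2$ solution of \rf{apreq2}, it is in particular a solution of the differential inequality \rf{apreq} on all of $M$, hence on every superlevel set $\Omega_\gamma$. Applying Proposition \ref{upapr} with, say, $\gamma = 0 < u^* \le +\infty$ (note that the finiteness hypothesis $\|a_+ + c_+\|_\infty < +\infty$ follows from $\|a_+ + c\|_\infty < +\infty$ and $c > 0$, so that $c_+ = c$, and that \rf{acb} is precisely \rf{acbapr}) we obtain $u^* < +\infty$ and
	\beqs
	u^* \le \max\left\{1,\, H_1^{1/(\sigma-1)}\right\} \le \max\left\{1,\, H^{1/(\sigma-1)}\right\} = \mathcal H\,,
	\eeqs
since $H_1 = \sup_{\Omega_1}\frac{a_+ + c}{b} \le \sup_M \frac{a_+ + c}{b} = H$. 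This gives the right-hand inequality in \rf{aprsubsup}.

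For the lower bound, the key observation — the same trick used in case ii) of Proposition \ref{suffsigma} — is to set $w = 1/u$. Since $u$ is a positive $\mathrm{C}^2$ solution, $w$ is again positive and $\mathrm{C}^2$, and a direct computation gives $\Delta w = -w^2 \Delta u + 2 w^3 |\nabla u|^2 \ge -w^2 \Delta u$. Substituting $\Delta u = -a u + b u^\sigma - c u^\tau$ from \rf{apreq2} and rewriting powers of $u$ in terms of $w$, one finds
	\beqs
	\Delta w + a(x) w - c(x) w^{2-\tau} + b(x) w^{2-\sigma} \ge 0 \quad \hbox{on $M$}\,.
	\eeqs
Thus $w$ is a positive $\mathrm{C}^2$ subsolution of a Lichnerowicz-type inequality in which the exponents are $\widetilde\sigma = 2 - \tau > 1$ (because $\tau < 1$) and $\widetilde\tau = 2 - \sigma < 1$ (because $\sigma > 1$), with the coefficient of the leading negative term equal to $c(x) > 0$ and the coefficient of the lower-order term equal to $b(x) > 0$. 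The hypotheses of Proposition \ref{upapr} are met for this new equation: $b$ and $c$ swap roles, so \rf{brmu} for $c$ (with the same $\mu < 2$) is the required decay condition, $\|a_+ + b\|_\infty \le \|a_- + b\|_\infty + \|a_+\|_\infty < +\infty$ supplies the boundedness requirement, and \rf{abc} gives $\sup_M \frac{a_+(x) + b(x)}{c(x)} \le \sup_M \frac{a_-(x) + a_+(x) + b(x)}{c(x)}$; more carefully, since $a_+ \le |a|$ one needs to check this sup is finite, which follows from \rf{abc} together with $\|a_+\|_\infty/\inf_M c < \infty$ — and here $c \ge C r^{-\mu}$ outside a compact set plus continuity and positivity on the compact part force $\inf_M c > 0$ only if $c$ is bounded below, which is not guaranteed; instead one argues that $a_+ \le a_- + $ (nothing) is false, so one writes $a_+ + b \le (a_+ + c) + b$ and uses both \rf{acb} and \rf{abc}, or simply absorbs $a_+$ using $a_+ \le a_+ + c$ and \rf{acb} to bound $a_+/c \le a_+/c \le $ hmm — cleanly: $\frac{a_+ + b}{c} \le \frac{a_+ + c}{c} + \frac{b}{c} \le \frac{a_+ + c}{b}\cdot\frac{b}{c} + \frac{b}{c}$, and $\frac{b}{c} \le \frac{a_- + b}{c} \le K$ by \rf{abc}, while $\frac{a_+ + c}{b} \le H$ by \rf{acb}, so $\frac{a_+ + b}{c} \le HK + K < +\infty$. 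Applying Proposition \ref{upapr} to $w$ with $\gamma = 0$ then yields $w^* < +\infty$ and $w^* \le \max\{1,\, \widetilde H^{1/(\widetilde\sigma - 1)}\}$ where $\widetilde H = \sup_M \frac{a_+ + b}{c}$; but one wants the sharp constant $K^{1/(\tau-1)}$. To get it, note $\widetilde\sigma - 1 = 1 - \tau$, and refine: since $u \ge 0$ and we only need $w \le$ something, observe $\sup_M \frac{a_+(x) + b(x)}{c(x)} $ is not quite $K$; rather $a_- + b$ appears in $K$. The resolution is that on the superlevel set of $w$ where $w > 1$ — equivalently $u < 1$ — we have $a_+ u \le a_+ \le a_+ u^\tau$ only if we instead keep the term $-a w$ honest: redo the reduction keeping $a$ rather than $a_+$, so that $w$ satisfies $\Delta w - c\, w^{2-\tau} + b\, w^{2-\sigma} \ge -a\, w \ge -a_- w$, giving $\Delta w + a_-(x) w - c(x) w^{2-\tau} + b(x) w^{2-\sigma} \ge 0$; now the relevant ratio is exactly $\frac{a_-(x) + b(x)}{c(x)} = K$ by \rf{abc}, and Proposition \ref{upapr} (or Theorem \ref{thmb} directly, after discarding the nonnegative term $b\, w^{2-\sigma}$) gives $w^* \le \max\{1,\, K^{1/(1-\tau)}\}$. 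Therefore
	\beqs
	u(x) = \frac{1}{w(x)} \ge \frac{1}{w^*} \ge \min\left\{1,\, K^{-1/(1-\tau)}\right\} = \min\left\{1,\, K^{1/(\tau-1)}\right\} = \mathcal K\,,
	\eeqs
which is the left-hand inequality in \rf{aprsubsup}.

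The main obstacle is bookkeeping rather than conceptual: one has to be careful that after the substitution $w = 1/u$ the reduced equation is put in exactly the normalized form to which Theorem \ref{thmb} / Proposition \ref{upapr} applies, using $a_-$ (not $|a|$ or $a_+$) so that the output constant is genuinely $K^{1/(\tau-1)}$ and not some larger expression, and that all the structural hypotheses — the polynomial lower bound \rf{brmu} with the \emph{same} exponent $\mu < 2$ for $c$, the finiteness of the relevant $L^\infty$ norms, and the volume growth \rf{Bvol} which is symmetric in $b,c$ — transfer verbatim. No new analytic input beyond Theorem \ref{thmb} and Proposition \ref{upapr} is needed; completeness of $M$ and \rf{Bvol} enter only through those results.
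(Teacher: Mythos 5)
Your proposal is correct and follows essentially the same route as the paper: the upper bound comes from Proposition \ref{upapr} applied directly to $u$, and the lower bound from the substitution $w=1/u$, which satisfies $\Delta w-a(x)w-c(x)w^{2-\tau}+b(x)w^{2-\sigma}\geq0$ so that Proposition \ref{upapr} applies with the roles of $b$ and $c$ swapped and with ratio $\sup_M\frac{a_-+b}{c}=K$, exactly as in the paper. One caveat: your parenthetical alternative of ``discarding the nonnegative term $b\,w^{2-\sigma}$ and using Theorem \ref{thmb} directly'' does not work (dropping a nonnegative term from the left of a $\geq0$ inequality goes the wrong way; Proposition \ref{upapr} absorbs it into the linear term instead, which is precisely why $b$ appears in $K$), but this is inessential since your main argument via Proposition \ref{upapr} is the valid one.
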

		\begin{proof}
		Suppose $\Omega_1=\left\{x\in M : u(x)>1\right\}\neq\emptyset$, then the validity of (\ref{apreq2}) implies that of
			\beqs
			\Delta u+ a(x)u - b(x)u^{\sigma} + c(x)u^{\tau}\geq 0 \quad \hbox{on $\Omega_1$},
			\eeqs 
		thus the estimate from above in \rf{aprsubsup} follows from Proposition \ref{upapr}. In case $\Omega_1=\emptyset$ the same estimate is trivially true because of the definition \rf{calKH} of $\mathcal{H}$. For the estimate from below we consider the function $v=\frac{1}{u}\in\mathrm{C}^2(M)$, since $u>0$ on $M$. Since $\Delta v=-v^2\Delta u+2v^3\left|\nabla u\right|^2$, using \rf{apreq2} we have
			\beqs
			\Delta v +\widetilde{a}(x)v-\widetilde{b}(x)v^{\widetilde{\sigma}}+\widetilde{c}(x)v^{\widetilde{\tau}}\geq 0\quad\hbox{on $M$,}
			\eeqs 
		where we have set $\widetilde{a}(x)=-a(x)$, $\widetilde{b}(x)=c(x)$, $\widetilde{c}(x)=b(x)$, $\widetilde{\sigma}=2-\tau>1$, and $\widetilde{\tau}=2-\sigma<1$. Now, since
			\beqs
			\frac{a_-(x)+b_+(x)}{c(x)}=\frac{\widetilde{a}_+(x)+\widetilde{c}_+(x)}{\widetilde{b}(x)}\,,
			\eeqs
		we can reason as above and deduce
			\beqs
			v\leq\max\left\{1,K^{1\slash(\widetilde{\sigma}-1)}\right\}=\max\left\{1,K^{1\slash(1-\tau)}\right\}\,
			\eeqs
		and the lower bound in \rf{aprsubsup} follows immediately from the definition of $v$.
		\end{proof}

We note that the existence of solutions for equation \rf{lich} can be easily obtained under the hypoteses of Theorem \ref{thmapr} by direct application of the \emph{monotone iteration scheme} of H. Amann (see for instance \cite{Sa} or \cite{MRS}). Indeed in this case it is relatively easy to find an ordered pair of global sub and supersolutions. The key point is that their existence is a consequence of the \emph{a priori} estimate. This is the content of the following
	
	\begin{lem}\label{subsuper}
	Let $\M$ be a complete Riemannian manifold. Let $a(x)$, $b(x)$, $c(x)$, $\sigma$, $\tau$, $H$, $K$, $\mathcal{H}$, and $\mathcal{K}$ be as in Theorem \ref{thmapr}. Then $u^+\equiv\mathcal{H}$ and $u^-\equiv\mathcal{K}$ are respectively a global supersolution and a global subsolution of \rf{apreq2}. Moreover $u^-\leq u^+$.
	\end{lem}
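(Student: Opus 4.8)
The plan is to verify the two defining inequalities directly. Since $u^{+}\equiv\mathcal{H}$ and $u^{-}\equiv\mathcal{K}$ are constants, $\Delta u^{\pm}\equiv 0$, so the statement reduces to two pointwise algebraic inequalities for the coefficients, and these are engineered precisely by the choices of $\mathcal{H}$ and $\mathcal{K}$ in \rf{calKH}. Note that none of the volume-growth or decay hypotheses \rf{Bvol}, \rf{brmu} are used here; only $b(x)>0$, $c(x)>0$, and the finiteness of $H$ and $K$ in \rf{acb}, \rf{abc} enter.

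First I would treat the supersolution. Since $\mathcal{H}\geq 1$ and $\tau<1$ we have $\mathcal{H}^{\tau}\leq\mathcal{H}$, hence, using $a(x)\leq a_{+}(x)$, $c(x)>0$, and $b(x)>0$,
\[
a(x)\mathcal{H}-b(x)\mathcal{H}^{\sigma}+c(x)\mathcal{H}^{\tau}\ \leq\ \mathcal{H}\,b(x)\Big[\tfrac{a_{+}(x)+c(x)}{b(x)}-\mathcal{H}^{\sigma-1}\Big]\ \leq\ \mathcal{H}\,b(x)\big[H-\mathcal{H}^{\sigma-1}\big].
\]
Now $\mathcal{H}^{\sigma-1}=\big(\max\{1,H^{1/(\sigma-1)}\}\big)^{\sigma-1}=\max\{1,H\}\geq H$, so the right-hand side is $\leq 0$; therefore $\Delta u^{+}+a(x)u^{+}-b(x)(u^{+})^{\sigma}+c(x)(u^{+})^{\tau}\leq 0$ on $M$, i.e.\ $u^{+}$ is a global supersolution of \rf{apreq2}.

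Symmetrically, for the subsolution I would use $\mathcal{K}\leq 1$ and $\sigma>1$, which give $\mathcal{K}^{\sigma}\leq\mathcal{K}$, together with $a(x)\geq -a_{-}(x)$ and $b(x)>0$, to obtain
\[
a(x)\mathcal{K}-b(x)\mathcal{K}^{\sigma}+c(x)\mathcal{K}^{\tau}\ \geq\ \mathcal{K}\,c(x)\Big[\mathcal{K}^{\tau-1}-\tfrac{a_{-}(x)+b(x)}{c(x)}\Big]\ \geq\ \mathcal{K}\,c(x)\big[\mathcal{K}^{\tau-1}-K\big].
\]
Since $\tau-1<0$ the map $t\mapsto t^{\tau-1}$ is decreasing on $\R^{+}$, so $\mathcal{K}^{\tau-1}=\big(\min\{1,K^{1/(\tau-1)}\}\big)^{\tau-1}=\max\{1,K\}\geq K$, and the right-hand side is $\geq 0$; thus $u^{-}$ is a global subsolution. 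Finally the ordering is immediate from \rf{calKH}: $u^{-}=\mathcal{K}\leq 1\leq\mathcal{H}=u^{+}$.

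There is no genuine obstacle in this lemma; the only point requiring care is to match the exponents to the exact definitions in \rf{calKH} — it is precisely the clamping $\mathcal{H}\geq 1$ (resp.\ $\mathcal{K}\leq 1$) that makes $\mathcal{H}^{\tau}\leq\mathcal{H}$ (resp.\ $\mathcal{K}^{\sigma}\leq\mathcal{K}$) available, and the identities $\mathcal{H}^{\sigma-1}=\max\{1,H\}$, $\mathcal{K}^{\tau-1}=\max\{1,K\}$ that absorb the supremum constants $H$ and $K$. One should also observe that, the coefficients being continuous and $u^{\pm}$ smooth, ``sub/supersolution'' may here be read in the classical pointwise sense, so no weak-formulation bookkeeping is needed.
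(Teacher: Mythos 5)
Your proof is correct and follows the same direct verification the paper gives: reduce to a pointwise algebraic inequality using $\mathcal{H}\geq1$, $\mathcal{K}\leq1$, the hypotheses \rf{acb}, \rf{abc}, and the clamping identities $\mathcal{H}^{\sigma-1}=\max\{1,H\}$, $\mathcal{K}^{\tau-1}=\max\{1,K\}$. The paper writes out only the supersolution half and asserts the subsolution case is analogous; you spell out both, and your intermediate factorization $\mathcal{H}\,b(x)\bigl[\frac{a_+(x)+c(x)}{b(x)}-\mathcal{H}^{\sigma-1}\bigr]$ is in fact the correct form (the paper's printed $\mathcal{H}/b(x)$ in that line is a harmless typo).
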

		\begin{proof}
		First of all we note that since $\mathcal{H}\geq 1$ and $\tau<1$, then it follows that $\mathcal{H}^{\tau-1}\leq 1$. This implies that
			\beqs
			\begin{aligned}
			\Delta u^++ a(x)u^+ - b(x)(u^+)^{\sigma} + c(x)(u^+)^{\tau} & = \mathcal{H}\left[a(x) - b(x)\mathcal{H}^{\sigma-1} + c(x)\mathcal{H}^{\tau-1}\right]\\
			& \leq\frac{\mathcal{H}}{b(x)}\left[\frac{a_+(x)+c(x)}{b(x)}-\mathcal{H}^{\sigma-1}\right]\\
			& \leq 0
			\end{aligned}
			\eeqs
		where in the last passage we have used \rf{acb} and the fact that $\mathcal{H}\geq H^{\frac{1}{\sigma-1}}$, thus $u^+$ is a global supersolution. The proof of the fact that $u^-$ is a subsolution is analogous and $u^-\leq u^+$ follows from the definitions of $\mathcal{H}$ and $\mathcal{K}$.
		\end{proof}

From this we immediately deduce the next existence result (see also \cite{RRV} for a similar result). 

	\begin{thm}\label{posex}
	Let $\M$ be a complete Riemannian manifold. Let $a(x)$, $b(x)$, $c(x)$, $\sigma$, $\tau$ be as in Theorem \ref{thmapr} and assume that $a(x)$, $b(x)$, $c(x)\in\mathrm{C}^{0,\alpha}(M)$ for some $\alpha>0$. Then \rf{lich} has a positive solution $u\in\mathrm{C}^2(M)$.
	\end{thm}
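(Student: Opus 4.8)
The plan is to invoke the classical sub/supersolution (monotone iteration) method of H.~Amann, exploiting the globally defined constant pair produced by Lemma \ref{subsuper}. By that lemma, under the present hypotheses $u^{+}\equiv\mathcal{H}$ is a global supersolution and $u^{-}\equiv\mathcal{K}$ a global subsolution of \rf{apreq2}, with $0<\mathcal{K}\leq\mathcal{H}$.

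First I would fix an exhaustion $\{D_k\}$ of $M$ by relatively compact open sets with smooth boundary, $\overline{D_k}\subset D_{k+1}$ and $\bigcup_k D_k=M$, and on each $D_k$ solve the Dirichlet problem for \rf{lich} with constant boundary datum $\mathcal{K}$. Since $u^{-}\equiv\mathcal{K}$ and $u^{+}\equiv\mathcal{H}$ restrict to an ordered sub/supersolution pair on $D_k$ and the boundary datum $\mathcal{K}$ lies in $[\mathcal{K},\mathcal{H}]$ along $\partial D_k$, the pair is admissible. Observe that for $s$ in the compact interval $[\mathcal{K},\mathcal{H}]\subset(0,+\infty)$ the nonlinearity $f(x,s)=a(x)s-b(x)s^{\sigma}+c(x)s^{\tau}$ is jointly continuous and of class $\mathrm{C}^1$ in $s$, with $\partial_s f$ bounded because the coefficients are bounded; hence one may fix a single constant $\lambda>0$ so that $s\mapsto f(x,s)+\lambda s$ is non-decreasing on $[\mathcal{K},\mathcal{H}]$ for every $x\in M$, which is all the iteration requires. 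Standard elliptic theory for the linearized problems then produces a solution $u_k\in\mathrm{C}^{2,\alpha}(\overline{D_k})$ of \rf{lich} on $D_k$ with $\mathcal{K}\leq u_k\leq\mathcal{H}$.

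Next I would let $k\to+\infty$. Because $\mathcal{K}\leq u_k\leq\mathcal{H}$ uniformly and $a,b,c\in\mathrm{C}^{0,\alpha}_{loc}(M)$, interior Schauder estimates give, for every pair of relatively compact open sets $\Omega'\subset\subset\Omega''\subset M$, a bound $\|u_k\|_{\mathrm{C}^{2,\alpha}(\overline{\Omega'})}\leq C(\Omega',\Omega'')$ uniform in all large $k$. A diagonal extraction along an exhaustion, together with Arzel\`a--Ascoli, then yields a subsequence converging in $\mathrm{C}^2_{loc}(M)$ to some $u\in\mathrm{C}^2(M)$ which solves \rf{lich} on $M$ and satisfies $\mathcal{K}\leq u\leq\mathcal{H}$; in particular $u>0$ since $\mathcal{K}>0$ (a further Schauder bootstrap upgrades $u$ to $\mathrm{C}^{2,\alpha}_{loc}$, though only $\mathrm{C}^2$ is needed).

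The argument is essentially routine; the points worth care are the \emph{uniformity} of the shift $\lambda$ in the iteration --- which is exactly what the two-sided bound $\mathcal{K}\leq\,\cdot\,\leq\mathcal{H}$ guarantees, by keeping us away from the possible singularity of $u^{\tau}$ at $0$ --- and the fact that the $u_k$ need not be monotone in $k$ (the boundary datum is fixed but the domains grow), so that the passage to the limit rests on the compactness provided by the elliptic estimates rather than on monotone convergence. Alternatively, one can dispense with the iteration altogether: since $b(x)>0$ on $M$ forces $B_0=\{b=0\}=\emptyset$, hence $\eig(B_0)=+\infty>0$, and since $u^{-}\equiv\mathcal{K}$ is a global positive subsolution of \rf{lich}, Theorem \ref{posex} follows at once from Proposition \ref{thmmax}.
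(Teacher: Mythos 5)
Your main argument is correct and is essentially the paper's proof: an exhaustion of $M$, Amann's monotone iteration between the constant barriers $u^-\equiv\mathcal{K}$ and $u^+\equiv\mathcal{H}$ supplied by Lemma \ref{subsuper}, and interior Schauder estimates to pass to the limit in $\mathrm{C}^2_{loc}$. The only cosmetic differences are the choice of boundary datum ($\mathcal{K}$ instead of $\mathcal{H}$) and the limit mechanism: the paper uses Lemma \ref{compf} to order the approximating solutions (with datum $\mathcal{H}$ they decrease in $k$) and then applies Schauder plus compact embedding, whereas you rely on compactness and a diagonal extraction; both are valid, and in fact with datum $\mathcal{K}$ Lemma \ref{compf} would give you a non-decreasing sequence, so monotone convergence is available to you as well. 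Your observation about a uniform shift $\lambda$ is justified since $a$, $b$, $c$ are bounded under the hypotheses of Theorem \ref{thmapr} and $\mathcal{K}>0$ keeps $s^{\tau}$ away from its singularity. Your closing alternative is also sound and genuinely different in spirit: since $b>0$ on $M$ gives $B_0=\emptyset$, hence $\eig(B_0)=+\infty>0$, and $u^-\equiv\mathcal{K}$ is a global positive subsolution, Proposition \ref{thmmax} applies directly; this shortcut buys slightly more than the stated theorem, namely a \emph{maximal} positive solution, at the price of invoking the heavier machinery of Section 2 (property $(\Sigma)$, blow-up solutions on the exhaustion) instead of the elementary constant-barrier iteration.
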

		\begin{proof}
		Let $\left\{\Omega_k\right\}_{k\in\N}$ be a family of bounded open sets with smooth boundaries such that
			\beqs
			\begin{aligned}
			\Omega_k\subset\subset\Omega_{k+1}\,;\\			
			\bigcup_{k\in\N}\Omega_k=M\,.
			\end{aligned}
			\eeqs
	For each $k\in\N$ consider the Dirichlet problem
		\beq\label{dirk}
		\begin{cases}
		\Delta v+ a(x)v - b(x)v^{\sigma} + c(x)v^{\tau}=0 & \hbox{on $\Omega_k$}\,;\\
		v=u^+ & \hbox{on $\partial\Omega_k$}\,,
		\end{cases}
		\eeq
	where $u^+=\mathcal{H}$ is the global supersolution of Lemma \ref{subsuper}. Since $u^+$ and $u^-$ of Lemma \ref{subsuper} are respectively a supersolution and a subsolution of \rf{dirk} for any $k\in\N$, it follows from the {monotone iteration scheme} (see for instance Theorem 2.1 of \cite{Sa}) that for any $k$ there exists a solution $v_k\in\mathrm{C}^{2,\alpha}(\Omega_k)$ of \rf{dirk} such that $u^-\leq v_k\leq u^+$. From Lemma \ref{compf} it follows that
		\beqs
		u^-\leq v_{i}\leq v_{j}\leq u^+\quad\hbox{on $\Omega_k$}
		\eeqs
	for all $i,j\in\N$ such that $i\geq j\geq k$. Thus, from the Schauder interior estimates and the compactness of the embedding $\mathrm{C}^{2,\alpha}(\Omega_k)\subset\mathrm{C}^{2}(\Omega_k)$ it follows that the $v_k$ converge uniformly on compact sets to a solution $u\in\mathrm{C}^{2}(M)$ of \rf{lich}. Moreover $u(x)\geq u^->0$.
	\end{proof}

Putting together Theorem \ref{suffwmp}, Corollary \ref{uniq} with $\Omega=\emptyset$, Theorem \ref{thmapr}, and Theorem \ref{posex} we have the following
	\begin{cor}\label{corLio}
	In the assumptions of Theorem \ref{thmapr} with $0\leq\mu<2$ the equation
		\beqs
			\Delta u+ a(x)u - b(x)u^{\sigma} + c(x)u^{\tau}=0 \quad \hbox{on $M$}. 
		\eeqs
	admits a unique positive solution $u\in\mathrm{C}^2(M)$.
	\end{cor}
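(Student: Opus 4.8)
The plan is to assemble this statement from the four results already established above, so the real work is only to route the hypotheses of Theorem~\ref{thmapr} (with the extra requirement $0\le\mu<2$) into each of them and to check compatibility. First I would dispose of \emph{existence}: the assumptions of Theorem~\ref{thmapr}, together with the local H\"older continuity of the coefficients that Theorem~\ref{posex} additionally demands, are precisely the hypotheses of Theorem~\ref{posex}, which furnishes a positive solution $u\in\mathrm{C}^2(M)$ of \rf{lich}. Then I would invoke Theorem~\ref{thmapr} itself for the \emph{a priori} bound: \emph{every} positive $\mathrm{C}^2$ solution $w$ of \rf{apreq2} satisfies $\mathcal{K}\le w\le\mathcal{H}$ on $M$, so each such $w$ automatically fulfils the two-sided bound \rf{CuC} of Corollary~\ref{uniq} with the uniform constants $C_1=\mathcal{K}$, $C_2=\mathcal{H}$.

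The third step would be to secure the $1/b$-WMP on $M$ for $L=\Delta$. Here I would use that, by \rf{brmu} with $0\le\mu<2$, the weight $q(x):=1/b(x)$ satisfies $q(x)\le C\,r(x)^{\mu}$ for $r(x)\gg1$, with $2>\mu\ge0$; combined with the volume-growth hypothesis \rf{Bvol}, Theorem~\ref{suffwmp} delivers exactly the $1/b$-WMP on $M$ for $\Delta$ --- and this is the one place where the extra requirement $\mu\ge0$, absent from Theorem~\ref{thmapr}, is used. Finally I would apply Corollary~\ref{uniq} with $\Omega=\emptyset$, so that there is no inner boundary and the boundary condition on $\partial\Omega=\emptyset$ is vacuous; its structural hypotheses \rf{hpcomp} follow from those of Theorem~\ref{thmapr}: $b>0$ and $c>0$ on $M$ give \rf{hpcomp} i) and ii); \rf{acb} gives $\sup_{M}c/b\le H<+\infty$, which is \rf{hpcomp} iv); and, since \rf{abc} gives $\sup_M a_-/c\le K$ while \rf{acb} gives $\sup_M c/b\le H$, the product bound yields $\sup_M a_-/b\le KH<+\infty$, which is \rf{hpcomp} iii). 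Corollary~\ref{uniq} then gives uniqueness among positive $\mathrm{C}^2$ solutions obeying \rf{CuC}; as all positive $\mathrm{C}^2$ solutions obey \rf{CuC} by Theorem~\ref{thmapr}, uniqueness is unconditional, and together with the existence step this proves the corollary.

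I do not expect a genuine obstacle: the mathematical content lives entirely in the earlier results, and the only care required is the elementary bookkeeping that turns the normalizations \rf{acb}, \rf{abc} and the lower bound \rf{brmu} of Theorem~\ref{thmapr} into the precise quotient bounds \rf{hpcomp} demanded by Theorem~\ref{comp} and into the growth bound on $q=1/b$ demanded by Theorem~\ref{suffwmp}, while remembering that the existence half of the argument (Theorem~\ref{posex}) asks for locally H\"older, and not merely continuous, coefficients.
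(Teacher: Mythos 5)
Your proposal is correct and takes essentially the same route as the paper: the paper's own ``proof'' is the single sentence preceding the corollary, which simply cites Theorem~\ref{suffwmp}, Corollary~\ref{uniq} with $\Omega=\emptyset$, Theorem~\ref{thmapr}, and Theorem~\ref{posex}, and your write-up merely fills in the hypothesis-routing (in particular the useful observation $\sup_M a_-/b \le \bigl(\sup_M a_-/c\bigr)\bigl(\sup_M c/b\bigr)\le KH$, and the fact that \rf{brmu} turns $q=1/b$ into a weight admissible for Theorem~\ref{suffwmp}). Your remark that the existence half also needs the local H\"older regularity demanded by Theorem~\ref{posex}, which is not literally part of ``the assumptions of Theorem~\ref{thmapr},'' is a fair observation about a small imprecision in the paper's statement rather than a gap in your argument.
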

	
The next corollary deals with the easier case where $a(x)$, $b(x)$, and $c(x)$ are of the form $\zeta f(x)$ where $0<f(x)\in\mathrm{C}^0(M)\cap\mathrm{L}^{\infty}(M)$ and $\zeta\in\R$. It generalizes Theorem 2 of \cite{Ma} and Theorem 7 of \cite{MW}. Furthermore it should be compared with Theorem 3.15 and Example 3.18 of \cite{DAM}.
	
	\begin{cor}\label{corapr}
	Let $\M$ be a complete Riemannian manifold. Let $\alpha$, $\beta$, $\gamma\in\R$ such that $\beta$, $\gamma>0$. Let $\sigma > 1$, $\tau<0$, $0<f(x)\in\mathrm{C}^0(M)\cap\mathrm{L}^{\infty}(M)$  satisfying \rf{brmu} outside a compact set for some $\mu<2$, and assume the validity of \rf{Bvol}. Then the unique positive solution of
		\beqs
			\Delta u + f(x)\left(\alpha u - \beta u^{\sigma} + \gamma u^{\tau}\right)=0 \quad \hbox{on $M$}
		\eeqs
	is given by $u\equiv\lambda$, where $\lambda\in\R^+$ satisfies $p(\lambda)=0$, with
		\beqs
		p(t)=\alpha +\beta t^{\sigma-1}-\gamma t^{\tau-1}\,.
		\eeqs
	\end{cor}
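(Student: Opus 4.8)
The plan is to split the claim into a trivial \emph{existence} part and a \emph{uniqueness} part drawn entirely from the results of Section~3. Write the coefficients of the equation in the form $a(x)=\alpha f(x)$, $b(x)=\beta f(x)$, $c(x)=\gamma f(x)$, so that the equation is exactly \rf{apreq2} for this particular choice. For existence one only has to look for a constant solution $u\equiv\lambda$ with $\lambda>0$: substituting and dividing by $f(x)\lambda>0$ reduces the equation to the scalar condition $\alpha-\beta\lambda^{\sigma-1}+\gamma\lambda^{\tau-1}=0$, that is, $p(\lambda)=0$. Since $\beta,\gamma>0$, $\sigma-1>0$ and $\tau-1<0$, the function $p$ is strictly monotone on $(0,+\infty)$ and its limits at $0^{+}$ and at $+\infty$ are infinite and of opposite sign (the behaviour at $0^{+}$ is where $\tau<0$ is used, guaranteeing $\lambda^{\tau-1}\to+\infty$ as $\lambda\to0^{+}$); hence $p$ has a unique positive zero $\lambda$, and the constant $u\equiv\lambda$ is a positive $\mathrm{C}^{2}(M)$ solution. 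Observe that, the solution being exhibited explicitly, no Hölder regularity of $f$ is involved — which is precisely why the hypothesis on $f$ in the statement is only $f\in\mathrm{C}^{0}(M)\cap\mathrm{L}^{\infty}(M)$, so that the Hölder assumption entering Corollary~\ref{corLio} through Theorem~\ref{posex} can be dispensed with in this case.

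For uniqueness I would apply Corollary~\ref{uniq} with $\Omega=\emptyset$ (equivalently, the uniqueness half of Corollary~\ref{corLio}), which combines the \emph{a priori} estimate of Theorem~\ref{thmapr}, the sufficient condition of Theorem~\ref{suffwmp} for the $1/b$-weak maximum principle, and the comparison Theorem~\ref{comp}. The point is to check its hypotheses for the coefficients $\alpha f,\beta f,\gamma f$: from $f>0$ on $M$ and $\beta,\gamma>0$ we get $b,c>0$ on $M$; from $f\in\mathrm{L}^{\infty}(M)$ we get $\|a_{+}+c\|_{\infty}=\|(\alpha_{+}+\gamma)f\|_{\infty}<+\infty$ and $\|a_{-}+b\|_{\infty}=\|(\alpha_{-}+\beta)f\|_{\infty}<+\infty$; both $b$ and $c$ inherit the pointwise lower bound \rf{brmu} from the one assumed on $f$ (rescaling the constant by $\beta$, resp. $\gamma$); \rf{Bvol} is assumed; and \rf{acb}, \rf{abc} hold with the constant values $H=(\alpha_{+}+\gamma)/\beta$ and $K=(\alpha_{-}+\beta)/\gamma$. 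Moreover one may take $0\le\mu<2$, since $\mu<0$ in \rf{brmu} would force $f$ to be unbounded, contradicting $f\in\mathrm{L}^{\infty}(M)$. Consequently: (i) by Theorem~\ref{thmapr} every positive $\mathrm{C}^{2}(M)$ solution satisfies the two-sided bound \rf{aprsubsup}, hence \rf{CuC}; (ii) since $1/b=(\beta f)^{-1}\le(\beta C)^{-1}r(x)^{\mu}$ outside a compact set with $0\le\mu<2$, Theorem~\ref{suffwmp} together with \rf{Bvol} gives the $1/b$-WMP on $M$ for $\Delta$; (iii) Corollary~\ref{uniq} with $\Omega=\emptyset$ then yields at most one positive $\mathrm{C}^{2}(M)$ solution obeying \rf{CuC}. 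Since the constant $u\equiv\lambda$ constructed above is such a solution, it is the unique positive solution, and the corollary follows.

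I do not expect any genuine obstacle: the whole argument amounts to matching the special coefficients $\alpha f,\beta f,\gamma f$ to the hypotheses of Theorems~\ref{thmapr} and \ref{suffwmp} and of Corollary~\ref{uniq}, together with the elementary calculus fact that a function of the form $\mathrm{const}-\beta t^{\sigma-1}+\gamma t^{\tau-1}$ with $\beta,\gamma>0$, $\sigma>1$, $\tau<0$ has a single positive root. The only two points calling for a word of care are the bookkeeping remarks that boundedness of $f$ forces $0\le\mu<2$ (so that the $1/b$-WMP is genuinely available through Theorem~\ref{suffwmp}), and that existence here is handed to us for free by the constant solution, so that no regularity of the coefficients beyond continuity is required.
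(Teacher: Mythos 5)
Your argument is correct and is essentially the paper's own (implicit) route: the paper states this corollary as a direct consequence of Theorem \ref{thmapr}, Theorem \ref{suffwmp} and Corollary \ref{uniq} with $\Omega=\emptyset$, exactly the assembly you carry out, with existence supplied by the explicit constant solution (which, as you observe, is why no H\"older regularity of $f$ is needed, unlike in Theorem \ref{posex}). One bookkeeping remark: the scalar condition you derive, $\alpha-\beta\lambda^{\sigma-1}+\gamma\lambda^{\tau-1}=0$, is the correct one, and it agrees with the stated $p(\lambda)=0$ only up to the sign of $\alpha$ (the displayed $p(t)=\alpha+\beta t^{\sigma-1}-\gamma t^{\tau-1}$ carries a sign slip and should read $-\alpha+\beta t^{\sigma-1}-\gamma t^{\tau-1}$), while your monotonicity argument for the unique positive root only uses $\tau<1$, so the hypothesis $\tau<0$ is not actually needed at that step.
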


We conclude the section with a second uniqueness result whose proof is based on that of Theorem 4.1 of \cite{BRS}, see also Theorem 5.1 in \cite{MRS}.
	\begin{thm}
	Let $\M$ be a complete manifold, $a(x), b(x), c(x)\in \mathrm{C}^0(M)$, $\sigma>1$, $\tau<1$, and assume \rf{signbc}, that is,
		\beqs
		b(x)\geq0,\quad\quad c(x)\geq0\,,
		\eeqs
	and
		\beq\label{bc0}
		b(x)+c(x)\not\equiv 0 \quad\hbox{on $M$}\,.
		\eeq
	Let $u$, $v\in\mathrm{C}^2(M)$ be positive solutions of
		\beqs
		\Delta u+a(x)u-b(x)u^{\sigma}+c(x)u^{\tau}= 0 \quad \hbox{on $M$,}	
		\eeqs
	such that
		\beq\label{uvint}
		\left\{\int_{\partial B_r}\left(u-v\right)^2\right\}^{-1}\notin\mathrm{L}^1(+\infty)\,.
		\eeq
	Then $u\equiv v$ on $M$.
	\end{thm}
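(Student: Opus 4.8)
The plan is to reduce everything to a statement about the ratio $\psi=u/v$, which is well defined and of class $\mathrm{C}^2(M)$ since $v>0$, and about $\chi=\psi-1=(u-v)/v$. Computing $v\Delta u-u\Delta v=\div\left(v^2\nabla(u/v)\right)$ and inserting the two equations for $u$ and $v$, one obtains the divergence identity
\beqs
\div\left(v^2\nabla\chi\right)=v\,F,\qquad F:=b(x)u\left(u^{\sigma-1}-v^{\sigma-1}\right)-c(x)u\left(u^{\tau-1}-v^{\tau-1}\right),
\eeqs
in which the coefficient $a(x)$ has dropped out precisely because $u$ and $v$ solve the \emph{same} equation. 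The decisive structural remark is that, since $\sigma-1>0$, $\tau-1<0$ and $b,c\geq 0$, one has $\chi(x)F(x)\geq 0$ for every $x$, with equality if and only if $\chi(x)=0$ or $b(x)+c(x)=0$.

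I would then exploit \rf{uvint} by a cut-off integration. First, we may assume that $\Theta(r):=\int_{\partial B_r}(u-v)^2$ is strictly positive for all large $r$: otherwise $\Theta(r_k)=0$ along some $r_k\to+\infty$, hence $u\equiv v$ on $\partial B_{r_k}$, and two applications of Lemma \ref{compf} (with $f_1=f_2$ the Lichnerowicz nonlinearity, cf. Remark \ref{rmkmonof}) give $u\equiv v$ on each $B_{r_k}$, hence on $M$. Fix then $r_0$ with $\Theta>0$ on $[r_0,+\infty)$ and, for $R>r_0$, let $\phi_R$ be the radial cut-off equal to $1$ on $B_{r_0}$, to $0$ outside $B_R$, and with profile
\beqs
t\longmapsto\frac{\int_t^R\Theta(s)^{-1}\,ds}{\int_{r_0}^R\Theta(s)^{-1}\,ds},\qquad r_0\leq t\leq R\,,
\eeqs
the optimal choice used in \cite{PRS}. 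Testing the divergence identity against $\chi\phi_R^2$, integrating by parts, and absorbing the mixed gradient term by Young's inequality yields
\beqs
\frac12\int_M\phi_R^2\,v^2\left|\nabla\chi\right|^2+\int_M\chi\,\phi_R^2\,v\,F\leq 2\int_M(u-v)^2\left|\nabla\phi_R\right|^2\,,
\eeqs
where we used $v\chi=u-v$ on the right; both summands on the left are non-negative.

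By construction of $\phi_R$ and the coarea formula the right-hand side equals $\left(\int_{r_0}^R\Theta(s)^{-1}\,ds\right)^{-1}$, which tends to $0$ as $R\to+\infty$ precisely because \rf{uvint} says $\Theta^{-1}\notin\mathrm{L}^1(+\infty)$. Since the $\phi_R$ increase pointwise to $1$, monotone convergence applied to the two non-negative integrals on the left gives
\beqs
\int_M v^2\left|\nabla\chi\right|^2=0\quad\quad\hbox{and}\quad\quad\int_M\chi\,v\,F=0\,.
\eeqs

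The first identity, together with $v>0$, forces $\nabla\chi\equiv 0$, so $\chi$ is a constant $\chi_0$ on the connected manifold $M$. If $\chi_0\neq 0$, then by the sign remark above $v\,F$ has the constant sign of $\chi_0$ throughout $M$, vanishing only where $b+c=0$; since $\int_M\chi_0\,v\,F=0$ we conclude $F\equiv 0$, hence $b(x)+c(x)=0$ for every $x$, i.e. $b\equiv c\equiv 0$, contradicting \rf{bc0}. Therefore $\chi_0=0$, that is $u\equiv v$. I expect the one genuinely delicate point to be the cut-off construction together with the coarea estimate; in particular one must be careful not to assume a priori that $\int_M v^2|\nabla\chi|^2$ is finite — this is exactly why the cross term is absorbed by Young's inequality before letting $R\to+\infty$.
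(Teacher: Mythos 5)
Your argument is correct and follows essentially the same route as the paper's, which simply defers to Theorem 5.1 of \cite{MRS}: the core steps --- the divergence identity $\div(v^2\nabla(u/v)) = v F$, the radial cut-off with the $\Theta^{-1}$ profile, the Young absorption and coarea computation --- are exactly the ones underlying the integral identity \rf{intbc} that the paper records, except that you work non-symmetrically with $\chi=u/v-1$ alone (yielding $u(u-v)$ in the nonlinear terms where the paper symmetrizes to $(u^2-v^2)$), which streamlines the endgame since $\nabla\chi\equiv 0$ directly gives $u=(1+\chi_0)v$. Your care over the degenerate case $\Theta(r_k)=0$, handled via two applications of Lemma \ref{compf} on $B_{r_k}$, is an appropriate way to make the cut-off construction legitimate.
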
	
	\begin{rmk}
	Note that condition \rf{uvint} is implied by $u-v\in\mathrm{L}^2(M)$ or even by the weaker request
		\beqs
		\int_{B_r}\left(u-v\right)^2=o(r^2)\quad\quad\hbox{as $r\rightarrow+\infty$}\,.
		\eeqs 
	See for instance Proposition 1.3 in \cite{RS}.
	\end{rmk}
		\begin{proof}
		The proof follows, mutatis mutandis, that reported in Theorem 5.1 of \cite{MRS} up to equation (5.7) that now becomes
			\beq\label{intbc}
			\begin{aligned}
			& \int_{M}b(x)\left(v^2-u^2\right)\left(v^{\sigma-1}-u^{\sigma-1}\right)+\int_{M}c(x)\left(v^2-u^2\right)\left(u^{\tau-1}-v^{\tau-1}\right)+\\
			& \quad\quad+\int_{M}\left\{\left|\nabla v-\frac{v}{u}\nabla u\right|^2+\left|\nabla u-\frac{u}{v}\nabla v\right|^2\right\}=0\,.
			\end{aligned}
			\eeq
		Because of \rf{signbc} we deduce $\left|\nabla u-\frac{u}{v}\nabla v\right|\equiv 0$ on $M$ so that $u=Av$ for some constant $A>0$. Substituting into \rf{intbc} yields
			\beqs
			\left(1-A^2\right)\left(1-A^{\sigma-1}\right)\int_Mb(x)v^{\sigma+1}=0
			\eeqs
		and
			\beqs
			\left(1-A^2\right)\left(A^{\tau-1}-1\right)\int_Mc(x)v^{\tau+1}=0\,.
			\eeqs
		Since $v>0$, \rf{bc0} implies $A=1$, that is, $u=v$ on $M$.
		\end{proof}
		\begin{rmk}
		The exponent $2$ in \rf{uvint} is sharp, see the discussion after Theorem 5.1 in \cite{MRS}.
		\end{rmk}

\section{A further comparison and uniqueness result}
In this section we prove a comparison result and a corresponding uniqueness result based on a spectral property of the operator $L=\Delta+a(x)$. As we have seen, the request $\eig(M)<0$ facilitates the search of solutions of equation \rf{lich}. Somehow the opposite request seems to limitate the existence of solutions.\\

We recall that $L$ has finite index if and only if there exists a positive solution $u$ of the differential inequality
	\beq\label{uind}
	Lu\leq 0\,,
	\eeq
outside a compact set $K$. In what follows we shall denote with $\MG$ a triple with the following properties: $\M$ is a complete manifold with a preferred origin $o$ and $G\in\mathrm{C}^2(\Mo)$, $G: \Mo\rightarrow\R^+$ is such that
	\beq\label{Ghp}
	\begin{cases}	
	\textit{i)}\,\,\,\, \Delta G\leq0 & \hbox{on $\Mo$}\,;\\
	\textit{ii)}\,\,\, G(x)\rightarrow+\infty & \hbox{as $x\rightarrow o$}\,;\\
	\textit{iii)}\,\, G(x)\rightarrow 0 & \hbox{as $x\rightarrow +\infty$}\,,
	\end{cases}
	\eeq
Clearly a good candidate for $G$ is the (positive) Green kernel at $o$ on a non-parabolic complete manifold, which, however, might not satisfy \rf{Ghp} iii). Observe that, for instance by the work of Li and Yau, \cite{LY}, iii) is satisfied by the Green kernel if $\Ric\geq0$. Other examples always concerning the Green kernel, are given by non-parabolic complete manifolds supporting a Sobolev inequality of the type
	\beq\label{sobolev}
	S(\alpha)^{-1}\left(\int_Mv^{\frac{2}{1-\alpha}}\right)^{1-\alpha} \leq \int_M\left|\nabla v\right|^2\quad\quad\hbox{for all $v\in\mathrm{C}_c^{\infty}(M)$}
	\eeq
for some $\alpha\in(0,1)$, $S(\alpha)>0$, and for all $v\in\mathrm{C}^{\infty}_c(M)$. For further examples see \cite{Ni} and the references therein. Note that, in these results, the authors also describe the behavior of $G(x)$ at infinity from above and below. That of $\left|\nabla G(x)\right|$ from above can often be obtained by classical gradient estimates. This is helpful for instance in Theorem \ref{uniq2} below.\\
However, since we only require superharmonicity of $G$, under a curvature assumption we can use \emph{transplantation} from a non-parabolic model. The argument is as follow. Assume $\M$ is a $m$-dimensional manifold with a pole $o$ and with radial sectional curvature (with respect to $o$) $\mathrm{K}_{rad}$ satisfying
	\beqs
	\mathrm{K}_{rad}\leq-F(r(x))\quad\quad\hbox{on $M$,}
	\eeqs
with $r(x)=\dist_M(x,o)$, $F\in\mathrm{C}^0(\R_0^{+})$. Let $g$ be a $\mathrm{C}^2$-solution of the problem
	\beq\label{riccati}
	\begin{cases}
	g''-F(r)g\leq0\\
	g(0)=0\,,\quad g'(0)=1\,,
	\end{cases}
	\eeq
and suppose that $g>0$ on $\R^+$. Note that this request is easily achieved by bounding appropriately $F$ from above. See for instance \cite{BMR}. Then by the Laplacian comparison theorem
	\beq\label{lapcomthm}
	\Delta r\geq (m-1)\frac{g'(r)}{g(r)}\quad\quad\hbox{on $M\setminus\left\{o\right\}$,}
	\eeq
and weakly on $M$. Consider the $\mathrm{C}^2$-model $M_g$ defined by $g$ with the metric
	\beqs
	\g{\,}{\,}_g=dr^2+g^2(r)d\theta^2
	\eeqs
on $M\setminus\left\{o\right\}=\R^+\times\mathrm{S}^{m-1}$, $\mathrm{S}^{m-1}$ the unit sphere, $d\theta^2$ its canonical metric. Then $M\setminus\left\{o\right\}$ is non-parabolic if and only if $\frac{1}{g^{m-1}}\in\mathrm{L}^1(+\infty)$. Now we transplant the positive Green function on $M\setminus\left\{o\right\}$ evaluated at $(y,o)$ to $M$, that is, we let
	\beqs
	G(x)=\int_{r(x)}^{+\infty}\frac{ds}{g(s)^{m-1}}>0\quad\quad\hbox{on $M\setminus\left\{o\right\}$.}
	\eeqs
An immediate computation yields
	\beqs
	\Delta G(x)=-\frac{1}{g(r(x))^{m-1}}\left\{\Delta r(x)-(m-1)\frac{g'(r(x))}{g(r(x))}\right\}\quad\quad\hbox{on $M\setminus\left\{o\right\}$.}
	\eeqs
Hence, (\ref{lapcomthm}) implies (\ref{Ghp}) i). The remaining of (\ref{Ghp}) be trivially satisfied.\\
Thus we solve the problem by looking for a solution of (\ref{riccati}) satisfying $\frac{1}{g^{m-1}}\in\mathrm{L}^1(+\infty)$. For a detailed analysis we refer also to \cite{BMR1}.
On $\Mo$ we define
	\beq
	t(x)=-\frac{1}{2}\log G(x)
	\eeq
and, for $s\in\R$, we set 
	\beq
	\Lambda_s=\left\{x\in\Mo : t(x)<s\right\}\cup\left\{o\right\}\,,
	\eeq
so that
	\beq
	\partial\Lambda_s=\left\{x\in\Mo : t(x)=s\right\}\,.
	\eeq
Note that, because of \rf{Ghp} ii), $\Lambda_s$ is open and $\left\{\Lambda_s\right\}_{s\in\R}$ is an exhausting family of open sets. Property \rf{Ghp} iii) and completeness of $\M$ implies that $\overline{\Lambda_s}$ is compact for each $s\in\R$.\\

We are now ready to prove the following
	\begin{thm}\label{thmind}
	Let $\MG$ be as above and suppose that $a(x)\in\mathrm{C}^0(M)$ satisfy
		\beq\label{aind}
		a(x)\leq\left\{1+\frac{1}{\log^2G(x)}\left[1+\frac{1}{\log^2\left(-\log\sqrt{G(x)}\right)}\right]\right\}\frac{\left|\nabla\log G(x)\right|^2}{4}\,,
		\eeq
	outside a compact set $K$. Then the operator $L=\Delta+a(x)$ has finite index.
	\end{thm}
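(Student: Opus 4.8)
The plan is to show directly that $L=\Delta+a(x)$ admits a positive solution of $Lu\leq0$ outside a compact set; by the characterization recalled above this is exactly finite index. The natural ansatz is a function of $t(x)=-\tfrac12\log G(x)$, so the first move is to rewrite everything in terms of $t$. From $\nabla t=-\tfrac12\nabla G/G$ one gets $|\nabla\log G|^2=4|\nabla t|^2$, whence $\tfrac14|\nabla\log G|^2=|\nabla t|^2$, and, using the standard formula for $\Delta\log$,
\[
\Delta t=-\frac12\frac{\Delta G}{G}+\frac12\frac{|\nabla G|^2}{G^2}=-\frac12\frac{\Delta G}{G}+2|\nabla t|^2\geq2|\nabla t|^2
\]
by \rf{Ghp} i). Since $\log G=-2t$ and $-\log\sqrt G=t$, so that $\log^2G=4t^2$ and $\log^2(-\log\sqrt G)=\log^2 t$, hypothesis \rf{aind} reads, outside $K$,
\[
a(x)\leq\Big(1+\frac{1}{4t^2}+\frac{1}{4t^2\log^2 t}\Big)|\nabla t|^2 .
\]
Finally, by \rf{Ghp} iii) we have $t(x)\to+\infty$ as $x\to+\infty$, while $t(x)\to-\infty$ as $x\to o$ by \rf{Ghp} ii); hence each set $\{t>s\}=M\setminus\overline{\Lambda_s}$ is the complement of a compact set, it stays away from $o$, and for $s$ large enough it avoids $K$ and lies where $\log t>0$.

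The second move is the substitution. Look for $u=\phi(t(x))$ with $\phi>0$ and eventually decreasing. Then $\Delta u=\phi''(t)|\nabla t|^2+\phi'(t)\Delta t$, and using $\phi'\leq0$ together with $\Delta t\geq2|\nabla t|^2$ and the displayed bound on $a$ (with $\phi>0$) yields, on $\{t>s\}$,
\[
Lu\leq|\nabla t|^2\Big[\phi''+2\phi'+\Big(1+\frac{1}{4t^2}+\frac{1}{4t^2\log^2 t}\Big)\phi\Big];
\]
at a point where $\nabla t=0$ the hypothesis forces $a\leq0$, and $Lu=\phi'(t)\Delta t+a\phi\leq0$ is immediate there since $\Delta t\geq0$. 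Thus it is enough to find $\phi>0$, eventually decreasing, with $\phi''+2\phi'+\big(1+\tfrac1{4t^2}+\tfrac1{4t^2\log^2 t}\big)\phi\leq0$ for $t$ large. Writing $\phi=e^{-t}\psi$ kills the first-order term and the $1$: one checks $\phi''+2\phi'+\phi=e^{-t}\psi''$, so the requirement becomes
\[
\psi''+\Big(\frac{1}{4t^2}+\frac{1}{4t^2\log^2 t}\Big)\psi\leq0 .
\]

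The last step is to produce $\psi$, and here the right choice is the critical iterated-logarithm function $\psi(t)=\sqrt{t\log t}$: differentiating $\log\psi=\tfrac12\log t+\tfrac12\log\log t$ twice shows that $\psi$ satisfies the last relation \emph{with equality}. Then $\phi(t)=e^{-t}\sqrt{t\log t}$ is positive, satisfies $\phi''+2\phi'+\big(1+\tfrac1{4t^2}+\tfrac1{4t^2\log^2 t}\big)\phi=0$, and is decreasing for $t$ large since $\phi'/\phi=-1+\tfrac1{2t}+\tfrac1{2t\log t}<0$ there; consequently $u(x)=e^{-t(x)}\sqrt{t(x)\log t(x)}$ is a positive $\mathrm{C}^2$ solution of $Lu\leq0$ on $\{t>s\}$ for $s$ sufficiently large, which finishes the proof. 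The only real content is guessing the nested substitutions $u=\phi(t)$ and $\phi=e^{-t}\psi$ and the sharp test function $\psi=\sqrt{t\log t}$ — the precise right-hand side of \rf{aind} is reverse-engineered so that this $\psi$ works with equality; handling the zero set of $\nabla t$ and identifying $\{t>s\}$ with a neighborhood of infinity are the only side points that need a word.
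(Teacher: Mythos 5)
Your proof is correct and follows the paper's strategic template: use the characterization of finite index as existence of a positive $u$ with $Lu\leq0$ outside a compact set, pass to the variable $t=-\tfrac12\log G$, and reduce to a one-dimensional Riccati/Sturm problem via the ansatz $u=\e^{-t}\psi(t)$. Where you diverge, and simplify, is in the last step. The paper does not exhibit $\psi$: it proves that the associated ODE $\ddot\beta+(\kappa(t)-1)\beta=0$ is non-oscillatory by invoking Theorem~6.44 of \cite{BMR}, and then establishes the extra requirement $\beta-\dot\beta\geq0$ (equivalent to your $\phi'\leq0$) through a change of variables involving an auxiliary dimension parameter $n$ and a Cauchy problem for the transformed equation $\bigl(\rho^{n-1}z'\bigr)'+\kappa\frac{(n-2)^2}{4\rho^2}\rho^{n-1}z=0$. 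You instead observe, by a direct computation using $\log\psi=\tfrac12\log t+\tfrac12\log\log t$, that $\psi(t)=\sqrt{t\log t}$ solves $\ddot\psi+\bigl(\tfrac1{4t^2}+\tfrac1{4t^2\log^2 t}\bigr)\psi=0$ exactly, and then $\phi'/\phi=-1+\tfrac1{2t}+\tfrac1{2t\log t}<0$ is automatic for $t$ large. This makes the proof self-contained (no appeal to \cite{BMR}'s oscillation machinery), and it also clarifies \emph{why} the peculiar right-hand side of \rf{aind} is critical: it is precisely the one that admits $\sqrt{t\log t}$ as an exact subsolution. Note also that your explicit $\psi$ is the \emph{principal} solution of the ODE, not the one the paper's Cauchy data produce, which grows like $\sqrt{t\log t}\log\log t$; both choices yield a valid $u$, and the paper uses the latter only later, to describe the asymptotics in \rf{phiasy}. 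Your handling of the $\Delta G\leq0$ term --- absorbing it into $\Delta t\geq 2|\nabla t|^2$ rather than tracking it as a separate summand $\tfrac{\Delta G}{2\sqrt G}(\beta-\dot\beta)$ --- is equivalent to, but tidier than, the paper's bookkeeping, and your treatment of the zero set of $\nabla t$ and the identification of $\{t>s\}$ with $M\setminus\overline{\Lambda_s}$ closes the remaining side points.
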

	\begin{rmk}
	Observe that condition \rf{aind} is meaningful outside a sufficiently large compact set $K$ because of \rf{Ghp} iii).
	\end{rmk}
		\begin{proof}
		On $\R^+$ we define the function
			\beq\label{kappa}
			\kappa(s)=1+\frac{1}{4s^2}\left[1+\frac{1}{\log^2s}\right]\,,
			\eeq
		so that inequality \rf{aind} can be rewritten as
			\beq\label{akappa}
			a(x)\leq\kappa(t(x))\frac{\left|\nabla\log G(x)\right|^2}{4}\quad\quad\hbox{on $M\setminus K$}\,.
			\eeq
		To prove the theorem we need to provide a positive solution $u$ of \rf{uind} on $M\setminus\widehat{K}$ for some compact $\widehat{K}$. Towards this aim we look for $u$ of the form
			\beq\label{ubeta}
			u(x)=\sqrt{G(x)}\beta(t(x))=\e^{-t(x)}\beta(t(x))\,,
			\eeq
		on $M\setminus\Lambda_T$ for some $T>0$ sufficiently large and with $\beta:\left[T,+\infty\right)\rightarrow\R^+$. Now a simple computation shows that $u$ satisfies
			\beq\label{lapbeta}
			\Delta u+\left[1-\frac{\ddot{\beta}}{\beta}(t(x))\right]\frac{\left|\nabla\log G(x)\right|^2}{4}u=\frac{1}{2}\frac{\Delta G}{\sqrt{G}}(x)\left[\beta(t(x))-\dot{\beta}(t(x))\right]
			\eeq
		on $M\setminus\Lambda_T$, where $\dot{\,}$ means the derivative with respect to $t$. Thus, using \rf{akappa} and \rf{lapbeta} we obtain
			\beqs
			\begin{aligned}
			\Delta u+a(x)u & \leq\left[\kappa(t(x))-1+\frac{\ddot{\beta}}{\beta}(t(x))\right]\frac{\left|\nabla\log G(x)\right|^2}{4}u\\
			& \quad\quad\quad\quad+\frac{\Delta G}{2\sqrt{G}}(x)\left[\beta(t(x))-\dot{\beta}(t(x))\right]\,.
			\end{aligned}
			\eeqs
		Hence using (\ref{Ghp}) i) we have that \rf{uind} is satisfied on $M\setminus\Lambda_T$ for $u$ as in \rf{ubeta} if we show the existence of a positive solution $\beta$ of 
			\beq\label{bode}
			\ddot{\beta}+\left[\kappa(t)-1\right]\beta=0
			\eeq
		satisfying the further requirement 
			\beq\label{be-bed}
			\beta-\dot{\beta}\geq0
			\eeq
		on $\left[T,+\infty\right)$ for some $T>0$; in other words we have to show that \rf{bode} is non-oscillatory and that (\ref{be-bed}) holds at least in a neighborhood of $+\infty$. As for non oscillation, applying Theorem 6.44 of \cite{BMR}, we see that this is the case if
			\beqs
			\kappa(t)-1\leq\frac{1}{4t^2}\left[1+\frac{1}{\log^2t}\right]
			\eeqs 
		on $\left[T,+\infty\right)$ for some $T>0$ sufficiently large. This is  guaranteed by the definition \rf{kappa} of $\kappa$. To show the validity of (\ref{be-bed}) we use the following trick. Fix $n\geq 3$ and define $\rho\in\R^+$ via the prescription
			\beq
			t=t(\rho)=\log\left(\sqrt{n-2}\rho^{\frac{n-2}{2}}\right)\,.
			\eeq
		Note that
			\beq
			t(0^+)=-\infty\,,\quad\quad t(+\infty)=+\infty\,,\quad\quad t'(\rho)=\frac{n-2}{2}\frac{1}{\rho}\quad\quad\hbox{on $\R^+$}.
			\eeq
		We then define
			\beq
			z(\rho)=\e^{-t(\rho)}\beta(t(\rho))\,.
			\eeq
		If $\beta$ is a solution of \rf{bode} on $[T,+\infty)$, having set $R=\rho(T)>0$ with $\rho(t)$ the inverse function of $t(\rho)$, $z$ satisfies
			\beq
			\left(\rho^{n-1}z'\right)'+\kappa(t(\rho))\frac{(n-2)^2}{4\rho^2}\rho^{n-1}z=0\quad\quad\hbox{on $[R,+\infty)$.}
			\eeq
		we can also fix the initial conditions
			\beq
			z(R)=1\,,\quad\quad z'(R)=0\,.
			\eeq
		Hence, since $\kappa\geq0$ on $[R,+\infty)$ a first integration of the solution $z$ of the above Cauchy problem yields
			\beqs
			z'(\rho)\leq0\quad\quad\hbox{on $[R,+\infty)$.}
			\eeqs
		But
			\beqs
			z'(\rho)=\frac{\sqrt{n-2}}{2}\frac{1}{\rho^{\frac{n}{2}}}\left\{\dot\beta(t(\rho))-\beta(t(\rho))\right\}
			\eeqs
		and therefore \rf{be-bed} is satisfied.\\
		This completes the proof of the Theorem.
		\end{proof}
	\begin{rmk}
	We have just proved that the equation
		\beq\label{HN}
		\ddot{\beta}+\frac{1}{4t^2}\left[1+\frac{1}{\log^2t}\right]\beta=0\quad\quad\hbox{on $\left[T,+\infty\right)$}
		\eeq
	(say $T\geq\e$) is non-oscillatory. This is not a consequence of the usual Hille-Nehari criterion (see \cite{Sw}). Indeed, setting $h(t)$ to denote the coefficient of the linear term in \rf{HN}, the condition of the classical criterion to guarantee the non-oscillatory character of the equation is that $h(t)\geq0$ for $t>>1$ and
		\beq\label{limHN}
		\limsup_{t\rightarrow+\infty}t\int_t^{+\infty}h(s)ds<\frac{1}{4}\,.
		\eeq
	However, in this case we have
		\beqs
		\frac{1}{4}<t\int_t^{+\infty}\frac{ds}{4s^2}<t\int_t^{+\infty}h(s)ds<\frac{1}{4}+\frac{1}{4}\int_t^{+\infty}\frac{ds}{s\log^2s}=\frac{1}{4}+\frac{1}{4\log t}
		\eeqs
	so that \rf{limHN} is not satisfied.
	\end{rmk}
We shall now see how to get non-oscillation of \rf{HN} following the idea in the proof of the mentioned Theorem 6.44 of \cite{BMR}. This will enable us to determine the asymptotic behavior of a solution $\beta$ of \rf{HN} at $+\infty$ and therefore of $u$ defined in \rf{ubeta} and solution of \rf{uind}. This will be later used in Theorem \ref{uniq2}.\\

Towards this aim we consider the function
	\beq
	w(t)=\sqrt{t}\log t
	\eeq
solution of Euler equation
	\beq
	\ddot{w}+\frac{1}{4t^2}w=0
	\eeq
on $\left[T,+\infty\right)$, $T>0$, and positive on $\left[T,+\infty\right)$ for $T>1$. Then the function
	\beq
	z=\frac{\beta}{w}
	\eeq
satisfies
	\beq\label{w2z}
	\left(w^2\dot{z}\right)^{\cdot}+\left(\kappa(t)-1-\frac{1}{4t^2}\right)w^2z=0\quad\quad\hbox{on $\left[T,+\infty\right)$}
	\eeq
for $T>>1$. Since $\frac{1}{w^2}\in\mathrm{L}^1(+\infty)$ we can define the critical curve $\chi_{w^2}$ relative to $w^2$ as in (4.21) of \cite{BMR}. A computation yields
	\beqs
	\chi_{w^2}(t)=\frac{1}{4}\frac{1}{t^2\log^2t}\quad\quad \hbox{for $t>>1$}\,,
	\eeqs
so that
	\beqs
	\kappa(t)-1-\frac{1}{4t^2}=\chi_{w^2}(t)\,.
	\eeqs
Hence from Theorem 5.1 and Proposition 5.7 of \cite{BMR} we deduce that the solution $z(t)$ of \rf{w2z} satisfies
	\beqs
	z(t)\sim\frac{C}{\sqrt{\log t}}\log\log t\quad\quad\hbox{as $t\rightarrow+\infty$}\,,
	\eeqs
for some constant $C>0$ and therefore
	\beq
	\beta(t)\sim C\sqrt{t\log t}\log\log t\quad\quad\hbox{as $t\rightarrow+\infty$}\,.
	\eeq
Using the above, we finally obtain the asymptotic behavior of $u$ in \rf{ubeta}, that is,
	\beq
	u(x)\sim \varphi(x)\quad\quad\hbox{as $x\rightarrow\infty$ in $M$}\,,
	\eeq
with
	\beq\label{phiasy}
	\varphi(x)=C\sqrt{G(x)}\sqrt{-\log\sqrt{G(x)}\log\left(-\log\sqrt{G(x)}\right)}\log\log\left(-\log\sqrt{G(x)}\right)
	\eeq
as $x\rightarrow\infty$ on $M$.\\
In particular the behavior of $u$ at infinity is known once that of $G(x)$ is known.\\
Next we prove a version of Theorem 5.20 of \cite{BMR} for equation \rf{lich}.
	\begin{thm}\label{comp2}
	Let $\M$ be a complete manifold, $a(x)$, $b(x)$, $c(x)\in\mathrm{C}^0(M)$, $\sigma>1$, $\tau<1$, and assume \rf{signbc} and \rf{bc0}. Let $\Omega$ be a relatively compact open set and assume the existence of $w\in\mathrm{C}^2(M\setminus\overline{\Omega})$ positive solution of 
		\beq\label{Lw}
		Lw=\Delta w+a(x)w\leq0\quad\quad\hbox{on $M\setminus\overline{\Omega}$}.
		\eeq
	Suppose that $u$ and $v$ are positive $\mathrm{C}^2$ solutions on $M$ of
		\beq\label{bmruv}
		\begin{cases}
		\Delta u+a(x)u-b(x)u^{\sigma}+c(x)u^{\tau}\leq 0\\
		\Delta v+a(x)v-b(x)v^{\sigma}+c(x)v^{\tau}\geq 0\,.
		\end{cases}
		\eeq
	If 
		\beq\label{uvw}
		u-v=o(w)\quad\quad\hbox{as $x\rightarrow\infty$}\,,
		\eeq
	then $v\leq u$ on $M$.
	\end{thm}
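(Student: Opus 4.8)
The plan is to argue by contradiction: suppose the open set $U:=\left\{x\in M:v(x)>u(x)\right\}$ is non-empty.

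The algebraic heart of the argument is the substitution $\psi:=v/u\in\mathrm{C}^2(M)$ (recall $u>0$). Carrying out the computation in the proof of Lemma \ref{compf} with $f_1=f_2=f(x,s)=a(x)s-b(x)s^{\sigma}+c(x)s^{\tau}$, and with $v$ in the role of the subsolution and $u$ in that of the supersolution, one obtains on $M$
	\beqs
	\Delta\psi+2\g{\nabla\psi}{\nabla\log u}\geq\psi\left[\frac{f(x,u)}{u}-\frac{f(x,v)}{v}\right]\,.
	\eeqs
By Remark \ref{rmkmonof} the function $s\mapsto f(x,s)/s=a(x)-b(x)s^{\sigma-1}+c(x)s^{\tau-1}$ is non-increasing on $\R^+$ — this uses \rf{signbc}, $\sigma>1$ and $\tau<1$ — so on $U$ the bracket is $\geq0$, strictly positive wherever $b(x)>0$ or $c(x)>0$. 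Thus $\psi$ is a subsolution on $U$ of $\Delta+2\g{\nabla\log u}{\nabla\,\cdot\,}$, an operator with \emph{no zeroth-order term}; this is the decisive structural point, since it makes the weak and strong maximum principles available with no sign assumption on $a(x)$. In particular a connected component $V$ of $U$ with $\overline{V}$ compact is impossible: there $\psi\equiv1$ on $\partial V$ while $\psi>1$ on $V$, so the weak maximum principle forces $\psi\leq1$ on $\overline{V}$. Hence every component of $U$ is non-compact, and, $\M$ being complete, unbounded. If $U=M$, i.e. $v>u$ everywhere, the strong maximum principle forces $v\equiv s\,u$ for a constant $s>1$; substituting into the two differential inequalities yields $b(x)u^{\sigma}s\left(1-s^{\sigma-1}\right)+c(x)u^{\tau}s\left(s^{\tau-1}-1\right)\geq0$, a quantity that is $\leq0$ pointwise and strictly negative wherever $b(x)>0$ or $c(x)>0$, contradicting \rf{bc0}.

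It remains to handle the unbounded components $V$ of $U$ with $\partial V\neq\emptyset$, on which $\psi=1$; here the barrier $w$ enters, following Theorem 5.20 of \cite{BMR}. After possibly enlarging $\Omega$ (which only shrinks the domain of $w$) we may assume $w$ is continuous and positive up to $\partial\Omega$, so $m_{w}:=\inf_{\partial\Omega}w>0$. Subtracting the inequalities for $v$ and $u$ gives, on $U$,
	\beqs
	L(v-u)=\Delta(v-u)+a(x)(v-u)\geq b(x)\left(v^{\sigma}-u^{\sigma}\right)-c(x)\left(v^{\tau}-u^{\tau}\right)\,,
	\eeqs
and the ground-state substitution $\phi:=(v-u)/w$, combined with $Lw\leq0$, yields on $U\cap(M\setminus\overline{\Omega})$
	\beqs
	\Delta\phi+2\g{\nabla\phi}{\nabla\log w}=\frac{1}{w}\left(L(v-u)-\phi\,Lw\right)\geq\frac{1}{w}\left[b(x)\left(v^{\sigma}-u^{\sigma}\right)-c(x)\left(v^{\tau}-u^{\tau}\right)\right]\,,
	\eeqs
once more a linear inequality with no genuine zeroth-order term (modulo the sublinear term $c(x)u^{\tau}$, see below). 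The hypothesis $u-v=o(w)$ is exactly the statement $\phi\to0$ as $x\to\infty$. A Phragm\'en--Lindel\"of argument — the strong maximum principle for $\Delta+2\g{\nabla\log w}{\nabla\,\cdot\,}$, the decay of $\phi$, and the value of $\phi$ on $\partial\Omega$ — then gives $v-u\leq C_{0}\,w$ on $M\setminus\overline{\Omega}$, with $C_{0}=\max\left\{0,\,(\sup_{\partial\Omega}(v-u))/m_{w}\right\}$. If $v\leq u$ on $\partial\Omega$ then $C_{0}=0$, so $v\leq u$ on $M\setminus\overline{\Omega}$, and Lemma \ref{compf} applied on the relatively compact set $\Omega$ (with $v$ subsolution, $u$ supersolution, $v\leq u$ on $\partial\Omega$) gives $v\leq u$ on $\Omega$, hence on $M$, contradicting $U\neq\emptyset$; while if $\sup_{\partial\Omega}(v-u)>0$, such a boundary point lies in an unbounded component of $U$ on which, by the bound just obtained, $\psi\to1$ at infinity, and feeding this into the reasoning with $\psi$ forces $\psi\leq1$ there — again a contradiction.

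The step I expect to be the main obstacle is precisely this coupling of the two substitutions. The comparison tailored to the full nonlinearity is the one with $\psi=v/u$, and it is robust for every $\tau<1$ because it produces the \emph{differences} $v^{\sigma-1}-u^{\sigma-1}>0$ and $u^{\tau-1}-v^{\tau-1}>0$ on $U$; the barrier $w$, on the other hand, is attached to the \emph{linear} operator $L=\Delta+a$, and in the range $0<\tau<1$ the term $c(x)\left(v^{\tau}-u^{\tau}\right)$ carries the unfavourable sign, so it must first be estimated — e.g. by $c(x)\,\tau\,v^{\tau-1}(v-u)$ via concavity of $t\mapsto t^{\tau}$ — and absorbed using the behaviour of $u$ and $w$ near infinity before the Phragm\'en--Lindel\"of step for $\phi$ can be run. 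Matching the estimate coming from $\psi$ on $\overline{\Omega}$ with the one coming from $\phi$ on $M\setminus\overline{\Omega}$, and excluding $\sup_{\partial\Omega}(v-u)>0$, is where the scheme of \cite{BMR} has to be transcribed with care; the assumption $b+c\not\equiv0$ is used only to rule out the degenerate solution $v\equiv s\,u$.
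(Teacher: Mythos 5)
Your scheme is genuinely different from the paper's. The paper follows Theorem 5.20 of \cite{BMR}: it extends $w$ to a positive function on all of $M$, sets $u_{\ep}=u+\ep w$, and tests the two differential inequalities in weak form with the Picone-type functions $\gamma_{\ep}/u_{\ep}$ and $\gamma_{\ep}/v$, where $\gamma_{\ep}=(v^{2}-u_{\ep}^{2})_{+}$ is compactly supported \emph{precisely because of} \rf{uvw}; subtracting, letting $\ep\to0$ and using $Lw\leq0$ one obtains an integral identity forcing $v/u$ to be constant on $\{v>u\}$, which contradicts \rf{signbc} and \rf{bc0}. The crucial structural payoff of this choice of test functions is that the sublinear term appears in the symmetric combination $c(x)\left(u^{\tau-1}-v^{\tau-1}\right)\left(v^{2}-u^{2}\right)$, which has a favourable sign for \emph{every} $\tau<1$; your linear-barrier substitution $\phi=(v-u)/w$ destroys exactly this symmetry, and that is where your proposal breaks down.

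Concretely, three steps are genuine gaps. (i) In the case $U=M$ the strong maximum principle only yields $\psi=v/u$ constant if $\psi$ attains an interior maximum; on a noncompact $M$ nothing in the hypotheses guarantees this (the decay assumption controls $u-v$ only relative to $w$), so the conclusion $v\equiv su$ is unjustified. (ii) For $0\leq\tau<1$ the term $-c(x)(v^{\tau}-u^{\tau})$ is nonpositive on $\{v>u\}$; the correct concavity bound is $v^{\tau}-u^{\tau}\leq\tau u^{\tau-1}(v-u)$ (your $\tau v^{\tau-1}(v-u)$ estimates it from the wrong side), and after absorption one is left with $\Delta\phi+2\g{\nabla\phi}{\nabla\log w}+\tau c(x)u^{\tau-1}\phi\geq0$, i.e.\ a zeroth-order coefficient of the \emph{wrong} (nonnegative) sign, for which neither the maximum principle nor a Phragm\'en--Lindel\"of conclusion $v-u\leq C_{0}w$ is available; your suggestion that it can be ``absorbed using the behaviour of $u$ and $w$ near infinity'' is precisely the missing argument, and no such behaviour is assumed in the theorem. (iii) Even granting $v-u\leq C_{0}w$ (and indeed \rf{uvw} already gives $v-u=o(w)$), the claim that $\psi\to1$ at infinity along an unbounded component does not follow: it would require $w=O(u)$, and no comparison between $w$ and $u$ is hypothesized, so the final contradiction in the case $\sup_{\partial\Omega}(v-u)>0$ is not established. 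Any of (i)--(iii) alone would sink the argument; the paper's integral method avoids all three at once, which is why it, rather than a pointwise comparison, is the natural proof here.
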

		\begin{proof}
		The idea of the proof is the same as that of Theorem 5.20 of \cite{BMR}. We report it here for the sake of completeness and for some minor differences. First we extend $w$ to a positive function $\widetilde{w}$ on $M$. Towards this end let $\Omega^{\prime}$ be a relatively compact open set such that $\overline{\Omega}\subset\Omega^{\prime}$ Fix a cut-off function $\psi$, $0\leq\psi\leq1$ such that $\psi\equiv1$ on $\Omega$ and $\psi\equiv0$ on $M\setminus\Omega^{\prime}$. Define $\widetilde{w}=\psi+(1-\psi)w$. Note that $\widetilde{w}>0$ on $M$ and $\widetilde{w}=w$ on $M\setminus\overline{\Omega^{\prime}}$ so that $L\widetilde{w}\leq0$ on $M\setminus\overline{\Omega^{\prime}}$. For notational convenience we write again $w$ and $\Omega$ in place of $\widetilde{w}$ and $\Omega^{\prime}$, but this time $w>0$ on $M$.\\
		Let $\ep>0$ and define $u_{\ep}=u+\ep w$ on $M$. Then $u_{\ep}$ is a solution on $M$ of
			\beqs
			\Delta u_{\ep}+a(x)u_{\ep}\leq b(x)u^{\sigma}-c(x)u^{\tau}+\ep Lw\,.
			\eeqs
		Therefore, interpreting the differential inequality in the weak sense, we have that for each $\varphi\in\mathrm{Lip}_{loc}(M)$, $\varphi\geq0$
			\beqs
			-\int_M\g{\nabla u_{\ep}}{\nabla\varphi}+\int_Ma(x)u_{\ep}\varphi\leq\int_Mb(x)u^{\sigma}\varphi-\int_Mc(x)u^{\tau}\varphi+\ep\int_M\varphi Lw\,.			
			\eeqs
		Now, by the second Green formula
			\beqs
			\int_M\varphi Lw=\int_Ma(x)w\varphi+\int_Mw\Delta\varphi=\int_MwL\varphi
			\eeqs
		and therefore we can rewrite the above inequality as
			\beq\label{uphi}
			-\int_M\g{\nabla u_{\ep}}{\nabla\varphi}+\int_Ma(x)u_{\ep}\varphi\leq\int_Mb(x)u^{\sigma}\varphi-\int_Mc(x)u^{\tau}\varphi+\ep\int_MwL\varphi \,.			
			\eeq
		Similarly, interpreting the second differential inequalitty of \rf{bmruv} in the weak sense
			\beq\label{vphi}
			-\int_M\g{\nabla v}{\nabla\varphi}+\int_Ma(x)v\varphi\geq\int_Mb(x)v^{\sigma}\varphi-\int_Mc(x)v^{\tau}\varphi \,,			
			\eeq
		with $\varphi$ as above.\\
		Next, by contradiction suppose that
			\beqs
			\Gamma=\left\{x\in M : v(x)>u(x)\right\}\neq\emptyset\,.
			\eeqs
		Then, for $\ep>0$ sufficiently small
			\beq\label{Gamep}
			\Gamma_{\ep}=\left\{x\in M : v(x)>u_{\ep}(x)\right\}\neq\emptyset\,.
			\eeq
		We now consider the Lipschitz function $\gamma_{\ep}=\left(v^2-u_{\ep}^2\right)_+$. Condition \rf{uvw} implies that $\gamma_{\ep}$ has compact support in $M$ and it is not identically zero because of \rf{Gamep}. Thus the functions $\varphi_1=\frac{\gamma_{\ep}}{u_{\ep}}$ and $\varphi_2=\frac{\gamma_{\ep}}{v}$ are admissible, respectively for \rf{uphi} and \rf{vphi}. Substituting we have
			\beqs
			-\int_M\g{\frac{\nabla u_{\ep}}{u_{\ep}}}{\nabla\gamma_{\ep}}-\frac{\left|\nabla u_{\ep}\right|^2}{u_{\ep}^2}\gamma_{\ep}-a(x)\gamma_{\ep}\leq\int_Mb(x)\frac{u^{\sigma}}{u_{\ep}}\gamma_{\ep}-c(x)\frac{u^{\tau}}{u_{\ep}}\gamma_{\ep}+\ep wL\left(\frac{\gamma_{\ep}}{u_{\ep}}\right)\,,		
			\eeqs
		and
			\beqs
			-\int_M\g{\frac{\nabla v}{v}}{\nabla\gamma_{\ep}}-\frac{\left|\nabla v\right|^2}{v^2}\gamma_{\ep}-a(x)\gamma_{\ep}\geq\int_Mb(x)v^{\sigma-1}\gamma_{\ep}-c(x)v^{\tau-1}\gamma_{\ep}\,.			
			\eeqs
		Thus, subtracting the second from the first we deduce
			\beqs
			\begin{aligned}
			& -\int_{\Gamma_{\ep}}\g{\frac{\nabla u_{\ep}}{u_{\ep}}-\frac{\nabla v}{v}}{\nabla\gamma_{\ep}} + \int_{\Gamma_{\ep}}	\left(\frac{\left|\nabla u_{\ep}\right|^2}{u_{\ep}^2}-\frac{\left|\nabla v\right|^2}{v^2}\right)\gamma_{\ep} \leq \\
			&\quad\quad\quad\leq\int_{\Gamma_{\ep}}b(x)\left(\frac{u^{\sigma}}{u_{\ep}}-v^{\sigma-1}\right)\gamma_{\ep}- \int_{\Gamma_{\ep}}c(x)\left(\frac{u^{\tau}}{u_{\ep}}-v^{\tau-1}\right)\gamma_{\ep}+\ep\int_MwL\left(\frac{\gamma_{\ep}}{u_{\ep}}\right)\,. 
			\end{aligned}
			\eeqs
		Inserting the expression for $\gamma_{\ep}$ and rearranging, we finally have
			\beq\label{rearr}
			\begin{aligned}		
			& \int_{\Gamma_{\ep}}\left|\nabla u_{\ep}-\frac{u_{\ep}}{v}\nabla v\right|^2-\left|\nabla v-\frac{v}{u_{\ep}}\nabla u_{\ep}\right|^2 \leq \int_{\Gamma_{\ep}}b(x)\left(\frac{u^{\sigma}}{u_{\ep}}-v^{\sigma-1}\right)\gamma_{\ep}-\\
			& \quad\quad\quad\quad\quad\quad  - \int_{\Gamma_{\ep}}c(x)\left(\frac{u^{\tau}}{u_{\ep}}-v^{\tau-1}\right)\gamma_{\ep}+\ep\int_MwL\left(\frac{\gamma_{\ep}}{u_{\ep}}\right)\,.
			\end{aligned}
			\eeq
		Let $V$ be a relatively compact open set with smooth boundary such that $\overline{\Omega}\subset V$ and let $\psi$, $0\leq\psi\leq1$ be a cut-off function such that $\psi\equiv1$ on $\Omega$ and $\psi\equiv 0$ on $M\setminus \overline{V}$. Then, using again the second Green formula and \rf{Lw} we have
			\beqs
			\begin{aligned}
			\int_Mw L\left(\frac{\gamma_{\ep}}{u_{\ep}}\right)& = \int_Mw L\left(\psi\frac{\gamma_{\ep}}{u_{\ep}}\right)+\int_Mw L\left(\left(1-\psi\right)\frac{\gamma_{\ep}}{u_{\ep}}\right)\\
			& = \int_M w L\left(\psi\frac{\gamma_{\ep}}{u_{\ep}}\right) + \int_M\left(1-\psi\right)\frac{\gamma_{\ep}}{u_{\ep}} L w\\
			& \leq \int_M w L \left(\psi\frac{\gamma_{\ep}}{u_{\ep}}\right)\,.
			\end{aligned}
			\eeqs
		Now since $u_{\ep}$ is bounded from below by a positive constant on $\overline{V}$, by applying the dominated convergence theorem we deduce that
			\beqs
			\lim_{\ep\rightarrow 0}\ep\left|\int_M w L\left(\psi\frac{\gamma_{\ep}}{u_{\ep}}\right)\right| \leq \lim_{\ep\rightarrow 0}\ep\left[\int_V\left|\nabla w\right|\left|\nabla\left(\psi\frac{\gamma_{\ep}}{u_{\ep}}\right)\right|+\left|a(x)w\psi\frac{\gamma_{\ep}}{u_{\ep}}\right|\right]=0\,.
			\eeqs
		Therefore, letting $\ep\rightarrow 0$ in \rf{rearr}, using Fatou's lemma and the last two inequalities, we get
			\beq\label{0uv0}
			\begin{aligned}
			0 & \leq \int_{\Gamma}\left|\nabla u-\frac{u}{v}\nabla v\right|^2 + \int_{\Gamma}\left|\nabla v-\frac{v}{u}\nabla u\right|^2\\ 
			& \leq \int_{\Gamma}b(x)\left(u^{\sigma-1}-v^{\sigma-1}\right)\left(v^2-u^2\right) - \int
_{\Gamma}c(x)\left(u^{\tau-1}-v^{\tau-1}\right)\left(v^2-u^2\right)\\
			& \leq 0\,.
			\end{aligned}
			\eeq
		Therefore $\frac{v}{u}$ is constant on any connected component of $\Gamma$. Clearly $\Gamma$ must have no boundary because otherwise letting $x\rightarrow\partial\Gamma$ we would deduce $u=v$ on $\Gamma$ which is a contradiction. By connectedness $v=Au$ on $M$ for some $A>1$ and inserting into \rf{0uv0} we deduce
			\beqs
			\int_{\Gamma}b(x)\left(1-A^{\sigma-1}\right)\left(1-A^2\right)u^{\sigma+1} + \int
_{\Gamma}c(x)\left(A^{\tau-1}-1\right)\left(1-A^2\right)u^{\tau+1} \equiv 0\,.
			\eeqs
		Since $u>0$, this contradicts assumptions \rf{signbc} and \rf{bc0}. Hence $\Gamma=\emptyset$ that is, $v\leq u$ on $M$.
		\end{proof}
Thus, considering $\varphi$ defined in \rf{phiasy}, as a consequence of Theorem \ref{comp2}, Theorem \ref{thmind}, and the subsequent discussion we have
	\begin{thm}\label{uniq2}
	Let $\MG$ as in \rf{Ghp}, $a(x)$, $b(x)$, $c(x)\in\mathrm{C}^0(M)$, $\sigma>1$, $\tau<1$, and assume \rf{signbc}, \rf{bc0}, and
		\beqs
		a(x)\leq\left\{1+\frac{1}{\log^2 G(x)}\left[1+\frac{1}{\log^2\left(-\log\sqrt{G(x)}\right)}\right]\right\}\frac{\left|\nabla\log G(x)\right|^2}{4}	
		\eeqs
	outside a compact set. If $u$ and $v$ are positive $\mathrm{C}^2$ solutions of
		\beqs
		\Delta u+a(x)u-b(x)u^{\sigma}+c(x)u^{\tau}=0
		\eeqs
	such that
		\beqs
		u(x)-v(x)=o(\varphi(x))\quad\quad\hbox{as $x\rightarrow\infty$}
		\eeqs
	with $\varphi(x)$ as in \rf{phiasy}, then $u\equiv v$ on $M$. 
	\end{thm}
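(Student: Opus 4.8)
The plan is to combine the finite-index statement of Theorem \ref{thmind} with the comparison principle of Theorem \ref{comp2}, exploiting the fact that, in the discussion following Theorem \ref{thmind}, the solution $u$ of \rf{uind} produced there is shown to have the explicit asymptotic profile $\varphi$ of \rf{phiasy} at infinity. In other words, the function $\varphi$ appearing in the hypothesis of the present theorem is precisely the barrier supplied by the finite-index construction.

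First I would invoke Theorem \ref{thmind}: the spectral bound on $a(x)$ guarantees that $L=\Delta+a(x)$ has finite index, and more precisely --- tracing through the \emph{ansatz} \rf{ubeta} and the asymptotic analysis of the non-oscillatory equation following it --- it yields, for $T>0$ sufficiently large, a positive function $w\in\mathrm{C}^2(M\setminus\overline{\Lambda_T})$ solving $Lw=\Delta w+a(x)w\leq 0$ on $M\setminus\overline{\Lambda_T}$ and satisfying $w(x)\sim\varphi(x)$ as $x\rightarrow\infty$, with $\varphi$ as in \rf{phiasy}. By \rf{Ghp} iii) and completeness, $\Lambda_T$ is a relatively compact open set, so $w$ is exactly of the type required by Theorem \ref{comp2}. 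Since $w$ and $\varphi$ are asymptotic, the hypothesis $u(x)-v(x)=o(\varphi(x))$ immediately implies $u(x)-v(x)=o(w(x))$ as $x\rightarrow\infty$.

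Next I would apply Theorem \ref{comp2} with this $w$ and $\Omega=\Lambda_T$. Both $u$ and $v$ solve the equation with equality, hence each of them satisfies both differential inequalities in \rf{bmruv}. Taking $u$ as the supersolution and $v$ as the subsolution, and using $u-v=o(w)$ from \rf{uvw} together with the sign conditions \rf{signbc} and \rf{bc0}, Theorem \ref{comp2} yields $v\leq u$ on $M$. The roles of $u$ and $v$ are symmetric: repeating the argument with $v$ as the supersolution and $u$ as the subsolution (the condition $v-u=o(w)$ being the same as before) gives $u\leq v$ on $M$. Combining the two inequalities, $u\equiv v$ on $M$.

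I do not expect a genuine obstacle here, since the analytic substance has already been carried out in Theorem \ref{thmind}, in the asymptotic discussion preceding the statement, and in Theorem \ref{comp2}. The only point requiring a little care is the bookkeeping: one must make sure that the function $w$ extracted from the proof of Theorem \ref{thmind} is positive and of class $\mathrm{C}^2$ on the complement of a relatively compact set and has the claimed asymptotic behavior, so that $o(\varphi)=o(w)$ and Theorem \ref{comp2} applies verbatim in both directions.
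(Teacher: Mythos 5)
Your proposal is correct and is essentially the paper's own argument: the paper states Theorem \ref{uniq2} precisely as a consequence of Theorem \ref{thmind} (whose construction \rf{ubeta}, via the asymptotic discussion, produces a positive $w$ with $Lw\leq0$ outside the relatively compact set $\Lambda_T$ and $w\sim\varphi$) combined with Theorem \ref{comp2} applied with this $w$. The symmetric double application of the comparison theorem to the pair $(u,v)$ and $(v,u)$ is exactly how the paper's conclusion $u\equiv v$ is obtained.
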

It is reasonable that if we strenghten the upper bound \rf{akappa} on $a(x)$ the growth of $u$ defined in \rf{ubeta} should improve in \rf{phiasy}.\\
For the sake of simplicity let us suppose
	\beq
	a(x)\leq\lambda\frac{\left|\nabla\log G(x)\right|^2}{4}
	\eeq
on $M\setminus K$ for some constant $\lambda\in\left(-\infty,1\right]$. We proceed as in the proof of Theorem \ref{thmind} to arrive to \rf{bode} that now reads
	\beq
	\ddot{\beta}+\left[\lambda-1\right]\beta=0
	\eeq
on $\left[T,+\infty\right)$ for some $T>0$. Positive solutions of the above are immediately obtained. Indeed, for $\lambda=1$ we let $\beta(t)=Ct$ for some constant $C>0$ while for $\lambda\in\left(-\infty,1\right)$ we let $\beta(t)=C\e^{\sqrt{1-\lambda}t}$, $C>0$. Thus the positive solution $u(x)$ of $Lu\leq 0$ given in \rf{ubeta} satisfies
	\beq
	u(x)\sim
		\begin{cases}
		C\sqrt{G(x)}\log\frac{1}{G(x)}&\hbox{if $\lambda=1$}\\
		C G(x)^{\frac{1-\sqrt{1-\lambda}}{2}}&\hbox{if $\lambda\in\left(-\infty,1\right)$}
		\end{cases}
	\eeq
as $x\rightarrow\infty$ for some constant $C>0$.\\
Thus, going back to Theorem \ref{uniq2} we obtain the following version
	\begin{thm}
	Let $\MG$ as in \rf{Ghp}, $a(x)$, $b(x)$, $c(x)\in\mathrm{C}^0(M)$, $\sigma>1$, $\tau<1$, and assume \rf{signbc}, \rf{bc0}, and
		\beqs
		a(x)\leq\lambda\frac{\left|\nabla\log G(x)\right|^2}{4}	
		\eeqs
	outside a compact set, for some constant $\lambda\in\left(-\infty,1\right]$. If $u$ and $v$ are positive $\mathrm{C}^2$ solutions of
		\beqs
		\Delta u+a(x)u-b(x)u^{\sigma}+c(x)u^{\tau}=0
		\eeqs
	such that
		\beqs
		u(x)-v(x)=
		\begin{cases}
		o\left(\sqrt{G(x)}\log\frac{1}{G(x)}\right)&\hbox{if $\lambda=1$}\\
		o\left(G(x)^{\frac{1-\sqrt{1-\lambda}}{2}}\right)&\hbox{if $\lambda\in\left(-\infty,1\right)$}
		\end{cases}		
		\quad\quad\hbox{as $x\rightarrow\infty$}\,,
		\eeqs
	then $u\equiv v$ on $M$. 
	\end{thm}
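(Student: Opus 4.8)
The plan is to deduce the statement from Theorem \ref{comp2}, in exactly the same way Theorem \ref{uniq2} was, the only difference being a sharper choice of the auxiliary function $w$ made possible by the stronger pointwise bound on $a(x)$.

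First I would recall the construction carried out in the discussion that precedes the statement. Searching for a positive solution of $Lw=\Delta w+a(x)w\le0$ outside a compact set, of the form $w(x)=\sqrt{G(x)}\,\beta(t(x))$ as in \rf{ubeta} with $t=-\tfrac{1}{2}\log G$, formula \rf{lapbeta} together with \rf{Ghp} i) and the hypothesis $a(x)\le\lambda\,|\nabla\log G(x)|^2/4$ reduces the problem to producing a positive $\beta$ on some $[T,+\infty)$ that solves $\ddot\beta+(\lambda-1)\beta=0$ and satisfies $\beta-\dot\beta\ge0$. For $\lambda=1$ the choice $\beta(t)=Ct$ does the job, and for $\lambda<1$ the choice $\beta(t)=C\mathrm{e}^{\sqrt{1-\lambda}\,t}$ does; in either case one obtains a positive $w\in\mathrm{C}^2(M\setminus K)$, $K$ compact, with $Lw\le0$ on $M\setminus K$ and
\[
w(x)\sim
\begin{cases}
C\sqrt{G(x)}\,\log\tfrac{1}{G(x)}, & \text{if }\lambda=1,\\[2pt]
C\,G(x)^{\frac{1-\sqrt{1-\lambda}}{2}}, & \text{if }\lambda\in(-\infty,1),
\end{cases}
\qquad\text{as }x\to\infty,
\]
for some constant $C>0$. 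Now fix a relatively compact open set $\Omega$ with $K\subset\Omega$; then $w$, restricted to $M\setminus\overline{\Omega}$, is precisely the function required by the hypotheses of Theorem \ref{comp2}.

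Next, since $u$ and $v$ are $\mathrm{C}^2$ solutions of $\Delta u+a(x)u-b(x)u^{\sigma}+c(x)u^{\tau}=0$ on $M$, each of them satisfies simultaneously both differential inequalities of \rf{bmruv}. Moreover, because $w$ is asymptotic to a positive multiple of $\sqrt{G}\,\log(1/G)$ (resp.\ of $G^{(1-\sqrt{1-\lambda})/2}$), the standing hypothesis $u-v=o(\sqrt{G}\,\log\tfrac{1}{G})$ (resp.\ $u-v=o(G^{(1-\sqrt{1-\lambda})/2})$) is exactly $u-v=o(w)$ as $x\to\infty$, a relation symmetric in $u$ and $v$. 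Applying Theorem \ref{comp2} with $u$ in the role of the supersolution and $v$ in the role of the subsolution yields $v\le u$ on $M$; applying it once more with $u$ and $v$ interchanged yields $u\le v$ on $M$. Hence $u\equiv v$, as claimed.

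I do not expect any genuine obstacle: the entire analytic content is already packaged in Theorem \ref{comp2} and in the ODE discussion preceding the statement, so the proof is essentially a two-line corollary. The only places that call for a moment's care are the elementary check of the sign condition $\beta-\dot\beta\ge0$ for the explicit profiles used, and the bookkeeping identifying $o(w)$ with the two decay rates stated in the theorem, so that Theorem \ref{comp2} applies verbatim in each of the two cases $\lambda=1$ and $\lambda\in(-\infty,1)$.
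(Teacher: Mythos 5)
Your proposal is correct and is essentially the paper's own argument: the strengthened bound on $a(x)$ turns \rf{bode} into $\ddot\beta+(\lambda-1)\beta=0$, the explicit profiles $\beta(t)=Ct$ (for $\lambda=1$) and $\beta(t)=C\e^{\sqrt{1-\lambda}\,t}$ (for $\lambda<1$) produce the positive supersolution $w$ of $Lw\le0$ with the stated asymptotics, and Theorem \ref{comp2} applied twice, with the roles of $u$ and $v$ exchanged, gives $u\equiv v$. The only point to watch is that the sign condition $\beta-\dot\beta\ge0$ for $\beta(t)=C\e^{\sqrt{1-\lambda}\,t}$ holds only when $0\le\lambda\le1$ (for $\lambda<0$ one has $\dot\beta=\sqrt{1-\lambda}\,\beta>\beta$, so the term $\frac{\Delta G}{2\sqrt{G}}\left(\beta-\dot\beta\right)$ in \rf{lapbeta} has the unfavorable sign unless $\Delta G\equiv0$, e.g.\ when $G$ is the actual Green kernel); this caveat is equally implicit in the paper's own derivation, so your write-up does not diverge from the paper's proof.
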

As a final remark we observe that finiteness of the index of $L=\Delta+a(x)$ can be also deduced by the validity of a Sobolev-type inequality on $M$. Indeed, according to Lemma 7.33 of \cite{PRSvan}, the validity of (\ref{sobolev}) and the assumption
	\beqs
	a_+(x)\in\mathrm{L}^{{1}\slash{\alpha}}(M)
	\eeqs 
imply that $L$ has finite index.

\end{document}